\theoremstyle{plain}
\newtheorem{thm}{Theorem}[section]
\newtheorem{prop}[thm]{Proposition}
\newtheorem{cor}[thm]{Corollary}
\newtheorem{lem}[thm]{Lemma}
\theoremstyle{definition}
\newtheorem{defn}[thm]{Definition}
\theoremstyle{remark}
\newtheorem{rem}[thm]{Remark}
\theoremstyle{plain}
\newcommand{\R}{\mathbb{R}}
\newcommand{\C}{\mathbb{C}}
\newcommand{\CP}{\mathbb{C}P}
\newcommand{\HP}{\mathbb{H}P}
\newcommand{\grad}{\mathrm{grad}}
\newcommand{\identity}{\mathrm{id}}
\newcommand{\scal}{\mathrm{scal}}
\newcommand{\ric}{\mathrm{Ric}}
\newcommand{\trace}{\mathrm{tr}}
\newcommand{\kernel}{\mathrm{ker}}
\newcommand{\volume}{\mathrm{vol}}
\newcommand{\dv}{\text{ }dV}
\newcommand{\Diff}{\mathrm{Diff}}
\newcommand{\spectrum}{\mathrm{spec}}
\newcommand{\gradient}{\mathrm{grad}}
\newcommand*{\suchthat}[1]{\left|\vphantom{#1}\right.}
\renewcommand{\title}[1]{{\bfseries #1}\par}
\renewcommand{\author}[1]{\medskip{#1}\par\smallskip}
\newcommand{\affiliation}[1]{{\itshape #1}\par}
\newcommand{\email}[1]{E-mail:~\texttt{#1}\par}
\numberwithin{equation}{section}
\begin{document}
\begin{center}
\title{\LARGE Stability of Einstein metrics under Ricci flow}
\vspace{3mm}
\author{\Large Klaus Kröncke}
\vspace{3mm}
\affiliation{Universität Hamburg, Fachbereich Mathematik\\Bundesstraße 55\\20146 Hamburg, Germany}
\email{klaus.kroencke@uni-hamburg.de} 
\end{center}
\vspace{2mm}
\begin{abstract}We prove dynamical stability and instability theorems for compact Einstein metrics under the Ricci flow.
We give a nearly complete charactarization of dynamical stability and instability in terms of the conformal Yamabe invariant and the Laplace spectrum.
In particular, we prove dynamical stability of some classes of Einstein manifolds for which it was previously not known. Additionally, we show that the complex projective space with the Fubini-Study metric is surprisingly dynamically unstable.
\end{abstract}
 \section{Introduction}
Let $M^n$, $n\geq2$ be a manifold. A Ricci flow on $M$ is a curve of metrics $g(t)$ on $M$ satisfying the evolution equation
 \begin{align}\label{ricciflow}\dot{g}(t)=-2\ric_{g(t)}.
 \end{align}
The Ricci flow was first introduced by Hamilton in \cite{Ham82}.
Since then, it has become an important tool in Riemannian geometry. It was not only an essential tool in the proof of the famous Poincare conjecture \cite{Per02,Per03} but also for proving other recent results like the differentiable sphere theorem \cite{BS09}.

The Ricci flow is not a gradient flow in the strict sence, but Perelman made the remarkable discovery that it can be interpreted as the gradient of the $\lambda$-functional
\begin{equation}\label{perelmanslambda}\lambda(g)=\inf_{\substack{f\in C^{\infty}(M)\\\int_M e^{-f}\dv_g=1}}\int_M(\scal_g+|\nabla f|^2_g)e^{-f}\dv_g\end{equation}
 on the space of metrics modulo diffeomorphisms \cite{Per02}.

\Footnotetext{}{2010 \emph{Mathematics Subject Classification.} 53C25,53C44.}
\Footnotetext{}{\emph{Key words and phrases.} Einstein metrics, Ricci flow, dynamical stability, Yamabe invariant.}

Ricci-flat metrics are the stationary points of the Ricci flow and Einstein metrics remain unchanged under the Ricci flow up to rescaling. It is now natural to ask how the Ricci flow behaves as a dynamical system close to Einstein metrics.
A stability result for compact Einstein metrics assuming positivity of the Einstein operator was proven in \cite{Ye93}.
Stability results for compact Ricci-flat metrics assuming nonnegativity of the Lichnerowicz Laplacian and integrability of infinitesimal Einstein deformations were proven by Sesum and Haslhofer in \cite{Ses06,Has12}, generalizing an
 older result in \cite{GIK02}.
Recently, Haslhofer and M\"uller \cite{HM14} were able to get rid of the integrability condition and proved the following:
A compact Ricci-flat manifold is dynamically stable if it is a local maximizer of $\lambda$ and dynamically unstable, if this is not the case. Because of monotonicity of $\lambda$ along the Ricci flow, the converse implications hold in both cases.

The aim of the present paper is to generalize these results to the Einstein case and to give geometric stability and instability conditions in terms of the conformal Yamabe invariant and the Laplace spectrum.
Throughout, any manifold will be compact.
 The Yamabe invariant of a conformal class is defined by
\begin{align*}Y(M,[g])=\inf_{\tilde{g}\in[g]}\volume(M,\tilde{g})^{2/n-1}\int_M\scal_{\tilde{g}}\dv_{\tilde{g}},
 \end{align*}
where $[g]$ denotes the conformal class of the metric $g$.
By the solution of the Yamabe problem, this infimum is always realized by a metric of constant scalar curvature \cite{Sch84}. Let $M$ be a manifold and $\mathcal{M}$ be the set of smooth metrics on $M$.
We call the map $\mathcal{M}\ni g\mapsto Y(M,[g])$ the Yamabe functional and the real number $$Y(M)=\sup_{g\in\mathcal{M}}Y(M,[g])$$ the smooth Yamabe invariant of $M$.
A metric $g$ on $M$ is called supreme if it realizes the conformal Yamabe invariant in its conformal class and the smooth Yamabe invariant of the manifold.

It is a hard problem to compute the smooth Yamabe invariant of a given compact manifold and only for a few examples (including the round sphere), it is explicitly known. An interesting question is whether
a compact manifold admits a supreme metric and whether it is Einstein. For more details concerning these questions, see e.g.\ \cite{LeB99}.

Any Einstein metric $g_E$ is a critical point of the Yamabe functional and it is a local maximum of the Yamabe functional if and only if $g_E$ is a local maximum of the Einstein-Hilbert action restricted to the set of constant
scalar curvature metrics of volume $\volume(M,g_E)$. This follows from \cite[Theorem C]{BWZ04}.
A sufficient condition for this is that the Einstein operator
\begin{align*}\Delta_E=\nabla^*\nabla-2\mathring{R}
 \end{align*}
is positive on all nonzero transverse traceless tensors, i.e.\ the symmetric $(0,2)$-tensors satisfying $\trace h=0$ and $\delta h=0$ (\cite[p.\ 279]{Boe05} and \cite[p.\ 131]{Bes08}).
Here, $\mathring{R}$ denotes the natural action of the curvature tensor on symmetric $(0,2)$-tensors.
 Conversely, if $g_E$ is a local maximum of the Yamabe functional, the Einstein operator is nessecarily nonnegative on transverse traceless tensors.
The Einstein operator and its spectrum were studied in \cite{Koi78,Koi83,IN05,DWW05,DWW07} and also in a recent paper by the author \cite{Kro15}. We find the following relation to the $\lambda$-functional which will be proven in Section \ref{localmax}.
\begin{thm}\label{lambdaandyamabe}
A Ricci-flat metric $g_{RF}$ is a local maximizer of $\lambda$ if and only if
it is a local maximizer of the Yamabe functional, i.e.\ there are no metrics of positive scalar curvature close to $g_E$.
\end{thm}
\begin{rem}
 This condition is automatic if $M$ is spin and if $\hat{A}(M)\neq0$ \cite[p.\ 46]{Hit74} because the existence of positive scalar curvature metrics is excluded.
\end{rem}
Since Einstein metrics are not stationary points of the Ricci flow in its original form we consider the volume-normalized Ricci flow
\begin{align}
\dot{g}(t)=-2\ric_{g(t)}+\frac{2}{n}\left(\fint_M\scal_{g(t)}\dv_{g(t)}\right)\cdot g(t).
\end{align}
This allows us to define appropriate notions of dynamical stability and instability for Einstein metrics.
\begin{defn}
 A compact Einstein manifold $(M,g_E)$ is called dynamically stable if for any $k\geq3$ and any $C^k$-neighbourhood $\mathcal{U}$ of $g_E$ in the space of metrics, there exists a $C^{k+2}$-neighbourhood $\mathcal{V}\subset\mathcal{U}$
such that for any $g_0\in\mathcal{V}$, the normalized Ricci flow starting at $g_0$  exists for all $t\geq0$ and converges modulo diffeomorphism to an Einstein metric in $\mathcal{U}$ as $t\to\infty$.

We call a compact Einstein manifold $(M,g_E)$ dynamically unstable if there exists a nontrivial normalized Ricci flow defined on $(-\infty,0]$ which converges modulo diffeomorphism to $g_E$ as $t\to-\infty$.
\end{defn}
It is well-known that the round sphere is dynamically stable \cite{Ham82,Hui85}. From now on, we assume that $(M^n,g)\neq (S^n,g_{st})$ and that $n\geq 3$. The round sphere is an exceptional case because 
it is the only compact Einstein space which admits conformal Killing vector fields.
Now we can state the main theorems of this paper.
\begin{thm}[Dynamical stability]\label{Thm1}Let $(M,g_E)$ be a compact Einstein manifold with Einstein constant $\mu$. Suppose that $(M,g_E)$ is a local maximizer of the Yamabe functional and if the smallest nonzero eigenvalue of the Laplacian satisfies $\lambda>2\mu$.
 Then $(M,g_E)$ is dynamically stable.
\end{thm}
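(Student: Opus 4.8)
The plan is to adapt the \L{}ojasiewicz--Simon strategy of Haslhofer and M\"uller \cite{HM13} from the Ricci-flat to the Einstein case; the genuinely new ingredient is the conformal (volume) direction, which is exactly what the spectral hypothesis $\lambda>2\mu$ controls. First I would replace the rescaled Ricci flow by the volume-normalized Ricci flow $\dot g=-2\ric_g+\frac{2}{n}\left(\frac{1}{\volume(M,g)}\int_M\scal_g\,dV_g\right)g$, for which $g_E$ is a genuine stationary solution because $\scal_{g_E}=n\mu$. The DeTurck trick with background metric $g_E$ turns this into a strictly parabolic equation with $g_E$ as a fixed point, whose solutions agree with those of the original flow up to a time-dependent family of diffeomorphisms. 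So it suffices to show that the resulting volume-normalized Ricci--DeTurck flow, started in a small $C^{k,\alpha}$-neighbourhood of $g_E$ inside the space of metrics of volume $\volume(M,g_E)$, exists for all time and converges to an Einstein metric in $\mathcal{U}$; the overall rescaling is then absorbed into the functional $\mathcal{F}$ from the definition of rescaled Ricci flow.

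Next I would exploit that this gauge-fixed flow is, near $g_E$, the gradient flow of an entropy functional $\mathcal{S}$ that is non-decreasing along the flow and whose critical points near $g_E$ are exactly the Einstein metrics --- Perelman's $\lambda$-functional \cite{Per02} restricted to the fixed-volume slice when $\mu\leq 0$, and the shrinker entropy $\nu$ when $\mu>0$. With respect to the decomposition $h=h^{TT}+f\,g_E+\mathcal{L}_X g_E$ of symmetric $2$-tensors into transverse traceless, pure-trace and Lie-derivative parts (a direct sum since $(M,g_E)\neq(S^n,g_{st})$ carries no non-Killing conformal vector fields), the Hessian of $\mathcal{S}$ at $g_E$ vanishes on the $\mathcal{L}_X g_E$-directions, which are removed by the gauge fixing; on transverse traceless tensors it is a negative multiple of $\Delta_E$; and on conformal directions $f\,g_E$ with $\int_M f\,dV_{g_E}=0$ it is a positive multiple of $2\mu-\Delta$. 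By \cite[Theorem C]{BWZ04} and the remarks in the introduction, local maximality of $g_E$ for the Yamabe functional forces $\Delta_E\geq 0$ on transverse traceless tensors, while $\lambda>2\mu$ makes $2\mu-\Delta$ negative definite --- in particular invertible --- on mean-zero functions. Hence the Hessian of $\mathcal{S}$ at $g_E$ is negative semidefinite with kernel contained in the finite-dimensional space of infinitesimal Einstein deformations, and, arguing as in the Ricci-flat case but now also along the conformal direction, $g_E$ is a local maximizer of $\mathcal{S}$. The strictness of $\lambda>2\mu$ is what makes a Lyapunov--Schmidt reduction possible that integrates out the conformal and Lie-derivative directions and concentrates all degeneracy in the transverse traceless sector.

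The analytic core is a \L{}ojasiewicz--Simon gradient inequality for $\mathcal{S}$ at $g_E$: there are $C>0$ and $\theta\in(0,\tfrac12]$ with $|\mathcal{S}(g)-\mathcal{S}(g_E)|^{1-\theta}\leq C\,\|\grad\,\mathcal{S}(g)\|$ for all $g$ in a sufficiently small neighbourhood of $g_E$. This follows from the general \L{}ojasiewicz--Simon machinery, as in \cite{HM13}, once one verifies that $\mathcal{S}$ is analytic and that its Hessian at $g_E$ is Fredholm on the gauge-fixed slice; Fredholmness combines ellipticity and self-adjointness of $\Delta_E$ on transverse traceless tensors with invertibility of $2\mu-\Delta$ on the conformal part, the latter again using $\lambda>2\mu$. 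I expect this step --- together with the bookkeeping that makes the gauge fixing, the volume normalization and the tensor decomposition compatible with an analytic functional on a Banach manifold of metrics --- to be the main obstacle, just as in the Ricci-flat case; it is where the possibly non-trivial kernel of $\Delta_E$, i.e.\ the merely non-strict Yamabe maximality, is handled.

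Finally I would run the standard convergence argument: monotonicity of $\mathcal{S}$ along the flow together with local maximality of $g_E$ for $\mathcal{S}$ keeps a flow started close enough to $g_E$ inside $\mathcal{U}$, and the \L{}ojasiewicz--Simon inequality then bounds the trajectory length $\int_0^\infty\|\dot g(t)\|\,dt<\infty$, yielding long-time existence and convergence to a limit metric $g_\infty\in\mathcal{U}$. As a critical point of $\mathcal{S}$ near $g_E$, the metric $g_\infty$ is Einstein; undoing the DeTurck gauge and the rescaling produces a rescaled Ricci flow from the original metric converging, modulo diffeomorphism, to the Einstein metric $g_\infty\in\mathcal{U}$, as claimed.
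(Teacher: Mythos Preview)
Your overall strategy is correct and matches the paper's: use a monotone entropy functional for which $g_E$ is a local maximizer, prove a \L{}ojasiewicz--Simon inequality, and run the standard trajectory-length argument. However, several technical choices differ from the paper in ways worth noting.

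First, the functionals. For $\mu<0$ the paper does \emph{not} use Perelman's $\lambda$ restricted to a volume slice; it uses the expander entropy $\mu_+(g)=\inf_f\int_M[\tfrac12(|\nabla f|^2+\scal)-f]e^{-f}\,dV$, whose first variation shows that the associated flow $\dot g=-2(\ric_g+g)$ (not the volume-normalized flow) makes $\mu_+$ monotone, and whose critical points are exactly Einstein metrics with constant $-1$. For $\mu>0$ the paper uses the shrinker entropy $\nu_-$ and the $\tau$-flow $\dot g=-2\ric_g+\tau_g^{-1}g$. Your choice of volume-normalized flow plus DeTurck gauge is a legitimate alternative, but you should check carefully that it is genuinely the gradient flow of your chosen $\mathcal{S}$; the paper instead pulls back by the diffeomorphisms generated by $-\grad f_g$, which directly turns the flow into $\dot{\tilde g}=-2(\ric+\tilde g+\nabla^2 f_{\tilde g})$ and makes the gradient structure transparent.

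Second, and more substantively, your route to local maximality of $\mathcal{S}$ is purely Hessian-based (TT/conformal/Lie decomposition plus Lyapunov--Schmidt). The paper takes a different and rather concrete path: it uses Koiso's ILH decomposition $\mathcal{M}\simeq C^\infty(M)\times\mathcal{C}_c$ near $g_E$, computes $\mu_+$ (resp.\ $\nu_-$) \emph{explicitly} on constant-scalar-curvature metrics --- e.g.\ $\mu_+(\bar g)=\tfrac12\scal_{\bar g}-\log\volume(M,\bar g)$ --- which reduces maximality along $\mathcal{C}_c$ directly to the Yamabe hypothesis, and then estimates the second variation in the volume-preserving conformal direction by hand, obtaining a uniform negative upper bound; a third-variation estimate controls the Taylor remainder. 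In the positive case the explicit conformal second variation produces the operator $(\tfrac{n}{\scal}\Delta-1)^{-1}(\tfrac{n}{\scal}\Delta-2)(\tfrac{n}{\scal}\Delta-\tfrac{n}{n-1})$, which is where the condition $\lambda>2\mu$ enters. This buys the paper a local-maximum statement that is robust against a nontrivial $\ker\Delta_E|_{TT}$ without invoking integrability; your Hessian argument would still need to supply that step, since negative-semidefinite Hessian alone does not give a local maximum.
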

\begin{thm}[Dynamical instability]\label{Thm2}Let $(M,g_E)$ be a compact Einstein manifold with Einstein constant $\mu$. Suppose that $(M,g_E)$ is a not local maximizer of the Yamabe functional or the smallest nonzero eigenvalue of the Laplacian satisfies $\lambda<2\mu$.
 Then $(M,g_E)$ is dynamically unstable.
\end{thm}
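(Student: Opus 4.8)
The proof splits in two, mirroring the Ricci-flat case. For $\mu=0$ the statement is exactly Haslhofer--M\"uller's Theorem~\ref{ricciflatinstability} together with the remark that a Ricci-flat metric is a local maximizer of Perelman's $\lambda$-functional \eqref{perelmanslambda} if and only if it is a local maximizer of the Yamabe functional, the condition on the smallest nonzero Laplace eigenvalue being vacuous there; so from now on I assume $\mu\neq0$ and let $\mathcal{S}$ be the scale- and diffeomorphism-invariant Perelman-type entropy adapted to the sign of $\mu$: Perelman's shrinker entropy $\nu$ if $\mu>0$, a suitably normalized expander entropy if $\mu<0$. In both cases $\mathcal{S}$ is monotone nondecreasing along the associated rescaled Ricci flow, has $g_E$ as a critical point, and, restricted to the constant-scalar-curvature metrics of a fixed volume, is a strictly increasing function of the Einstein--Hilbert action there. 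The plan is to prove: (A) under the hypotheses of the theorem, $g_E$ is \emph{not} a local maximizer of $\mathcal{S}$; and (B) whenever a compact Einstein metric fails to be a local maximizer of $\mathcal{S}$, there is a nontrivial ancient rescaled Ricci--DeTurck flow converging to it as $t\to-\infty$ --- the exact analogue of Theorem~\ref{ricciflatinstability}. Pulling such a flow back by its DeTurck diffeomorphisms then produces a rescaled Ricci flow $\dot g=-2\ric_g+\mathcal{F}(g)g$ on $(-\infty,0]$, nontrivial and converging modulo diffeomorphism to $g_E$ as $t\to-\infty$, which is the assertion.

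For part (A): if $(M,g_E)$ is not a local maximizer of the Yamabe functional, then by \cite{BWZ04} (together with the characterizations on \cite[p.~279]{Boe05} and \cite[p.~131]{Bes08}) it is not a local maximizer of the Einstein--Hilbert action among constant-scalar-curvature metrics of volume $\volume(M,g_E)$, hence not a local maximizer of $\mathcal{S}$ restricted to that set, hence not a local maximizer of $\mathcal{S}$ on the full space of metrics. If instead $\lambda<2\mu$ --- which forces $\mu>0$, so $\mathcal{S}=\nu$ --- then, since $g_E$ is a critical point of $\nu$, it suffices to find a direction of positive second variation: a computation shows that the Hessian of $\nu$ at $g_E$ on the conformal variation $\phi\,g_E$, where $\phi$ is a nonconstant Laplace eigenfunction for the eigenvalue $\lambda$ (hence of mean zero), is a positive multiple of $2\mu-\lambda>0$; here the standing assumption $(M,g_E)\neq(S^n,g_{st})$ is essential, for it guarantees that $\phi\,g_E$ is not the variation induced by a conformal Killing field and so genuinely changes $\nu$. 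In either case $g_E$ is not a local maximizer of $\mathcal{S}$.

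For part (B): following Haslhofer--M\"uller, pass to the rescaled Ricci--DeTurck flow with background metric $g_E$, for which $g_E$ is a genuine fixed point. In the Berger--Ebin decomposition $h=h^{TT}+\tfrac{1}{n}(\trace_{g_E}h)\,g_E+\delta^*\omega$ of symmetric $2$-tensors, its linearization at $g_E$ equals, up to positive constants, $-\Delta_E$ on transverse traceless tensors, $-(\Delta-2\mu)$ on the conformal part $\phi\,g_E$, and $0$ on Lie derivative directions (which the DeTurck gauge removes). Hence, if $\Delta_E$ has a negative eigenvalue on transverse traceless tensors or if $\lambda<2\mu$, there is a growing mode $e^{\kappa t}h_\ast$ with $\kappa>0$, and $h_\ast$ is geometric --- not tangent to the orbit of $g_E$ under diffeomorphisms and scalings --- because $\mathcal{S}$ strictly increases along $h_\ast$ while being invariant under those symmetries. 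The ancient solution is then constructed as in \cite{HM13}: one establishes a \L{}ojasiewicz--Simon inequality for $\mathcal{S}$ in a $C^{k,\alpha}$-neighbourhood of $g_E$ (from the analyticity of $\mathcal{S}$ and the Fredholm property of its gradient at $g_E$, using the absence of conformal Killing fields to control the kernel), builds an ancient solution either by a contraction-mapping argument in an exponentially weighted parabolic Hölder space on $(-\infty,0]$ with leading term the growing mode, or as a limit of rescaled Ricci--DeTurck flows issuing from metrics $g_i\to g_E$ with $\mathcal{S}(g_i)>\mathcal{S}(g_E)$, and uses monotonicity of $\mathcal{S}$ together with the \L{}ojasiewicz--Simon inequality and parabolic regularity to upgrade subconvergence to smooth convergence to $g_E$ as $t\to-\infty$. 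Undoing the DeTurck gauge with the (convergent) diffeomorphisms $\varphi_t$ yields the sought rescaled Ricci flow; it is nontrivial since $\mathcal{S}(g(0))>\mathcal{S}(g_E)$ while $\mathcal{S}$ is diffeomorphism- and scale-invariant.

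The main difficulties are in part (B): proving the \L{}ojasiewicz--Simon inequality for the genuinely Einstein (non-Ricci-flat) entropies $\nu$ and the expander entropy near $g_E$ while keeping the gauge term under control; and, crucially, handling the degenerate case in which $\Delta_E\geq0$ has nontrivial kernel (obstructed or non-integrable infinitesimal Einstein deformations) while $g_E$ is still not a local maximizer of the Yamabe functional --- there is then no linear growing mode, and the ancient solution must be built purely from monotonicity and the \L{}ojasiewicz--Simon inequality, exactly as in the degenerate Ricci-flat case of \cite{HM13}. One must also verify that the chosen (volume) normalization is compatible with the DeTurck gauge and preserves the monotonicity of $\mathcal{S}$, and, in part (A), carry out the Hessian computation for $\nu$ in the conformal direction that isolates the eigenvalue $\lambda$ against $2\mu$.
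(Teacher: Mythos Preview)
Your outline is correct and follows the paper's strategy: reduce to showing that $g_E$ is not a local maximizer of the relevant entropy ($\mu_+$ for $\mu<0$, $\nu_-$ for $\mu>0$), establish a \L{}ojasiewicz--Simon inequality for that entropy near $g_E$, and then build the ancient flow as a limit of time-shifted flows starting from a sequence $g_i\to g_E$ with $\mathcal{S}(g_i)>\mathcal{S}(g_E)$. Two points of comparison are worth noting. First, the paper does not use the Ricci--DeTurck gauge; instead it pulls the rescaled Ricci flow back by the diffeomorphisms generated by $-\nabla f_g$ (the entropy minimizer), so that the modified flow is literally the $L^2(e^{-f}\,dV)$-gradient flow of the entropy---this makes the monotonicity and \L{}ojasiewicz--Simon estimates mesh directly, and it avoids having to check separately that the DeTurck diffeomorphisms stay controlled and that the normalization is compatible with the gauge. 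Second, the paper's construction of the ancient flow makes no use of the linearization or of growing modes: the argument is uniform and relies only on failure of local maximality together with the \L{}ojasiewicz--Simon inequality, which already forces the flows $g_i(t)$ to leave a fixed ball at times $t_i\to\infty$; your discussion of the spectrum of the linearized flow is correct but superfluous, and the ``degenerate case'' you flag is in fact the only case the paper treats. Your Part~(A) matches Theorems~\ref{yamabemaximality} and~\ref{yamabemaximalityII}; for the eigenvalue condition the paper computes the second variation of $\nu_-$ in conformal directions explicitly (formula~\eqref{conformalsecondnu}), obtaining an operator whose sign on a Laplace eigenfunction is governed precisely by the factor $\lambda-2\mu$, confirming your claim.
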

Apart from the case $\lambda=2\mu$, this gives a complete description of the Ricci flow as a dynamical system close to a compact Einstein metric. The converse implications nearly hold:
If an Einstein manifold is dynamically stable, then it is a local maximizer of the Yamabe functional and the smallest nonzero eigenvalue of the Laplacian satisfies $\lambda\geq2\mu$. If it is dynamically unstable
it is not a local maximizer of the Yamabe functional or the smallest nonzero Laplace eigenvalue satisfies $\lambda\leq2\mu$. This follows from Theorem \ref{yamabemaximality} resp.\ Theorem \ref{yamabemaximalityII} and the monotonicity of
the functionals $\mu_+$ and $\nu_-$ along the corresponding variants of the Ricci flow.

The Ricci-flat case is already covered by the results in \cite{HM14} and Theorem \ref{lambdaandyamabe}, so it remains to consider the cases of positive and negative Einstein constant.
Both cases will be proven separately. We use the functional $\mu_+$ in the negative case and $\nu_-$ in the positive case. Both are analogues of the $\lambda$-functional.
The negative case will be
treated more extensively. In the positive case, the strategy is basically the same and so we will skip the details there.

Observe that the case of nonpositive Einstein constant is easier to handle with because the eigenvalue condition drops there.
In fact, all known compact nonpositive Einstein manifolds satisfy the assumptions of Theorem \ref{Thm1}.
For certain classes of nonpositive Einstein manifolds we actually know that these assumptions hold: By \cite[Theorem 3.6]{LeB99}, any $4$-dimensional K\"ahler-Einstein manifold with nonpositive scalar curvature 
realizes the smooth Yamabe invariant of $M$. Thus, we have
\begin{cor}
 Any compact four-dimensional K\"ahler-Einstein manifold with nonpositive scalar curvature is dynamically stable.
\end{cor}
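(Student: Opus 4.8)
The plan is simply to verify the two hypotheses of Theorem \ref{Thm1} (Dynamical stability), splitting off the borderline case $\mu=0$, which lies outside the scope of the functionals $\mu_+$ and $\nu_-$ used in the proof of Theorem \ref{Thm1} and must instead be routed through Theorem \ref{ricciflatstability}.

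First I would dispatch the Yamabe hypothesis, uniformly in the sign of $\mu\leq 0$: by \cite[Theorem 3.6]{LeB99} a compact four-dimensional Kähler--Einstein manifold $(M,g_E)$ with nonpositive Einstein constant realizes the smooth Yamabe invariant $Y(M)$. Since $Y(M)=\sup_{g\in\mathcal{M}}Y(M,[g])$ and $Y(M,[g_E])=Y(M)$, the metric $g_E$ is a global, hence a fortiori local, maximizer of the Yamabe functional $g\mapsto Y(M,[g])$.

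Next, for $\mu<0$ the spectral hypothesis is automatic: on a compact manifold the smallest nonzero eigenvalue $\lambda$ of the Laplacian on functions is strictly positive, while $2\mu<0$, so $\lambda>2\mu$ (this is exactly the remark in the text that ``the eigenvalue condition drops'' in the nonpositive case). Moreover $(M^4,g_E)\neq(S^4,g_{st})$, since the round sphere has positive Einstein constant and is not Kähler, so the standing assumption of Theorem \ref{Thm1} holds. Theorem \ref{Thm1} then yields dynamical stability. For $\mu=0$ the metric $g_E$ is Ricci-flat; by the equivalence recorded in the excerpt (a Ricci-flat metric maximizes $\lambda$ locally iff it maximizes the Yamabe functional locally), the previous paragraph shows $g_E$ is a local maximizer of Perelman's $\lambda$-functional, and Theorem \ref{ricciflatstability} provides long-time existence and convergence modulo diffeomorphism of the unrescaled Ricci flow from a neighbourhood of $g_E$ to a Ricci-flat metric; taking $\mathcal{F}\equiv 0$ in the definition of rescaled Ricci flow, this is precisely dynamical stability.

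There is no genuinely hard step: the corollary is an assembly of results already available. The only points requiring care are (i) separating the Ricci-flat case $\mu=0$ and passing through Theorem \ref{ricciflatstability} via the $\lambda$--Yamabe equivalence rather than Theorem \ref{Thm1}, and (ii) noting that the manifolds in question are never the excluded round sphere, so that the standing hypotheses of the main theorems are met. Both existence of examples (K3 surfaces and torus quotients for $\mu=0$; minimal surfaces of general type with ample canonical bundle for $\mu<0$) and the cited input are standard, so the argument is short.
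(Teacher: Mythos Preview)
Your proposal is correct and follows essentially the same approach as the paper: invoke \cite[Theorem 3.6]{LeB99} to get local maximality of the Yamabe functional, note that the eigenvalue condition $\lambda>2\mu$ is automatic for $\mu\leq 0$, and apply Theorem~\ref{Thm1} (with the Ricci-flat case routed through Theorem~\ref{ricciflatstability}, exactly as the paper itself does when it says the Ricci-flat case is already covered). Your explicit verification that $(M^4,g_E)\neq(S^4,g_{st})$ and the careful separation of the $\mu=0$ case are slightly more detailed than the paper's one-line justification, but the logic is identical.
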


 In contrast, there are many positive Einstein metrics, which satisfy one of the conditions of Theorem \ref{Thm2}.
For example, any product of two positive Einstein metrics does not maximize the Yamabe functional because the Einstein operator admits negative eigenvalues. On the other hand, there are some symmetric spaces of compact type (e.g.\ $\HP^n$ for $n\geq3$, see \cite[Table 2]{CH13}), which are local maxima 
of the Yamabe functional and satisfy the eigenvalue condition of Theorem \ref{Thm2}.

However, some interesting examples are not covered by the above two theorems because they are local maxima of the Yamabe functional but the smallest nonzero eigenvalue of the Laplacian satisfies $\lambda=2\mu$.
These examples include the symmetric spaces $G_2,\CP^n,SO(n+2)/(SO(n)\times SO(2)),n\geq5,S(2n)/U(n),n\geq5,E_6/(SO(10)\cdot SO(2)),E_7/(E_6\cdot SO(2))$ with their standard metric, see \cite[Table 1 and Table 2]{CH13}.
For such manifolds, we prove dynamical instability under an additional condition.
\begin{thm}[Dynamical instability]\label{Thm3}Let $(M^n,g_E)$, $n\geq3$ be a compact Einstein manifold with Einstein constant $\mu$. Suppose that there exists a $v\in C^{\infty}(M)$ statisfying $\Delta v=2\mu v$ and $\int_M v^3\dv\neq0$. Then $(M,g_E)$ is dynamically unstable.
 \end{thm}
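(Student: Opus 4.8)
\emph{Proof strategy.} The plan is to reduce the statement to the failure of local maximality of Perelman's shrinker entropy $\nu_{-}$ and then invoke the instability mechanism already established in the proof of Theorem \ref{Thm2}. Observe first that the hypothesis has content only in the positive case: since $\Delta$ is nonnegative, $2\mu\in\spectrum(\Delta)$ forces $\mu\geq 0$, and $\mu=0$ would make $v$ constant, so that $g_E$ is Ricci-flat and the assertion is contained in Theorem \ref{ricciflatinstability} (a constant $v$ only rescales $g_E$). Hence assume $\mu>0$. In this case, exactly as in \cite[Theorem~1.2]{HM13}, if $g_E$ is \emph{not} a local maximizer of $\nu_{-}$ then there is a nontrivial ancient rescaled Ricci flow $g(t)$, $t\in(-\infty,0]$, converging modulo diffeomorphism to $g_E$ as $t\to-\infty$, i.e.\ $(M,g_E)$ is dynamically unstable. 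So it suffices to exhibit, arbitrarily close to $g_E$, a metric with $\nu_{-}$ strictly larger than $\nu_{-}(g_E)$.

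I would look for such a metric along the explicit conformal line $g(s)=(1+sv)\,g_E$ through $g_E=g(0)$ in the direction $h=v\,g_E$ (here $1+sv>0$ for $|s|$ small since $M$ is compact). The function $\varphi(s):=\nu_{-}(g(s))$ is smooth near $s=0$, because near an Einstein metric the minimizing function and the scale parameter in Perelman's $\mathcal{W}$-functional are unique and depend smoothly on the metric; and since $g(s)$ is an affine line in the direction $h$, the Taylor coefficients of $\varphi$ at $0$ are exactly the successive variations $D^{k}\nu_{-}(g_E)[h,\dots,h]$. As $g_E$ is a critical point of $\nu_{-}$ we have $\varphi'(0)=0$. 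By the second variation formula for $\nu_{-}$ at an Einstein metric (the computation underlying Theorem \ref{yamabemaximalityII}; compare \cite{Kro13a}), on a conformal direction $\phi\,g_E$ with $\int_M\phi\dv=0$ the second variation is a nonzero multiple of $\int_M(\Delta\phi-2\mu\phi)\,\phi\dv$, which vanishes for $\phi=v$ precisely because $\Delta v=2\mu v$. Thus $\varphi''(0)=0$: the critical point $s=0$ is degenerate, and its nature is decided at third order.

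It therefore remains to compute $\varphi'''(0)=D^{3}\nu_{-}(g_E)[h,h,h]$ with $h=v\,g_E$. Expanding $\nu_{-}$ along $g(s)$ to third order requires the second-order expansions about $g_E$ of the minimizing function $f$ and of the scale parameter $\tau$ (these enter the cubic term at the same order as the ``pure'' cubic-in-$v$ terms); substituting them into the third-order Taylor expansion of the $\mathcal{W}$-functional yields an integral in $v$, $\nabla v$, $\nabla^{2}v$ and $\Delta v$. Using $\Delta v=2\mu v$, the Einstein condition $\ric=\mu g_E$, and integration by parts --- for instance $\int_M v|\nabla v|^{2}\dv=-\mu\int_M v^{3}\dv$ and $\int_M\nabla^{2}v(\nabla v,\nabla v)\dv=\mu^{2}\int_M v^{3}\dv$, while each inhomogeneous term involving $f$ or $\tau$ reduces, via self-adjointness of the operators determining them, to a cubic integral in $v$ --- every summand collapses to a multiple of $\int_M v^{3}\dv$. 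Hence $\varphi'''(0)=c\int_M v^{3}\dv$ for an explicit constant $c=c(n,\mu)$, and one checks that $c\neq 0$. Consequently $\nu_{-}(g(s))=\nu_{-}(g_E)+\tfrac{c}{6}\,s^{3}\int_M v^{3}\dv+O(s^{4})$, so, since $\int_M v^{3}\dv\neq 0$, choosing $s$ small with $\operatorname{sign}(s)=\operatorname{sign}\!\big(c\int_M v^{3}\dv\big)$ gives $\nu_{-}(g(s))>\nu_{-}(g_E)$. Thus $g_E$ is not a local maximizer of $\nu_{-}$, and the mechanism recalled in the first paragraph yields dynamical instability.

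The heart of the argument, and the place where something could go wrong, is the third-variation computation: one must control the nontrivial second-order behaviour of the minimizer $f$ and of $\tau$ along the conformal line, and then verify that after all the reductions via $\Delta v=2\mu v$ and the Einstein condition the constant $c$ is genuinely nonzero rather than accidentally vanishing. Everything else is either quoted (the instability mechanism as in \cite{HM13}, the second variation formula) or a routine sequence of integrations by parts.
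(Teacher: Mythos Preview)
Your proposal follows exactly the same route as the paper: reduce to $\mu>0$, show that $g_E$ fails to be a local maximizer of $\nu_-$ by expanding along the conformal line $(1+sv)g_E$, observe that the first and second variations vanish (because $g_E$ is critical and $\Delta v=2\mu v$), and then argue that the third variation is a nonzero multiple of $\int_M v^3\dv$; finally invoke the instability theorem (Theorem~\ref{dynamicalinstabilitymodulodiffeo}).

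The one substantive gap is the third-variation step, which you yourself flag. You assert that after all the reductions one obtains $\varphi'''(0)=c\int_M v^3\dv$ with $c\neq 0$, but you do not compute $c$. The paper carries this out in full in Proposition~\ref{thirdmuexplicit}: one shows directly $\tau'=0$ and $f'=(\tfrac{n}{2}-1)v$, hence $\nabla\nu_-'=0$ (this gives $\varphi''(0)=0$ without needing the full operator $L$); then one computes $\ric''$, $(\nabla^2 f)''$, $\scal''$, and $f''$ explicitly, and after the integrations by parts obtains the precise constant $c=3n-4$. Since $n\geq 3$ throughout, this is nonzero. Without this computation the proof is incomplete, because there is no structural reason a priori that the cubic coefficient does not accidentally vanish. (Minor remark: your integration-by-parts identity has a sign slip; with the paper's convention $\Delta=-\trace\nabla^2$ one gets $\int_M v|\nabla v|^2\dv=\mu\int_M v^3\dv$, not $-\mu$. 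Also, the second variation on conformal directions is governed by the operator $L$ of \eqref{conformalsecondnu}, which contains the factor $(\tfrac{1}{\mu}\Delta-2)$ rather than being simply a multiple of $\int(\Delta\phi-2\mu\phi)\phi\dv$; your conclusion $\varphi''(0)=0$ is nonetheless correct.)
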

 \noindent
We construct an eigenfunction on $\CP^n$ with its standard metric satisfying this condition and thus, we have
\begin{cor}\label{unstableCPn}The manifold $(\CP^n,g_{st})$, $n>1$ is dynamically unstable.
 \end{cor}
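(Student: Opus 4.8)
The plan is to apply Theorem \ref{Thm3}: it suffices to produce a function $v\in C^{\infty}(\CP^n)$ with $\Delta v=2\mu v$ and $\int_{\CP^n}v^3\dv\neq0$. Recall that the Fubini--Study metric $g_{st}$ is Einstein with constant $\mu>0$ and that, as noted in the discussion preceding Theorem \ref{Thm3} (see \cite[Table 1 and Table 2]{CH13}), its smallest nonzero Laplace eigenvalue is exactly $2\mu$; concretely, in the normalisation with sectional curvature in $[1,4]$ one has $\mu=2(n+1)$ and $\lambda_1=4(n+1)$. Writing points of $\CP^n$ in homogeneous coordinates $[z_0:\dots:z_n]$, the eigenspace for $\lambda_1$ is spanned by the functions $v_A([z])=\langle Az,z\rangle/|z|^2$ with $A$ a trace-free Hermitian $(n+1)\times(n+1)$ matrix. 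So for any real $a_0,\dots,a_n$ with $\sum_j a_j=0$, the (manifestly smooth) function
\[v([z])=\frac{\sum_{j=0}^n a_j|z_j|^2}{\sum_{j=0}^n|z_j|^2}\]
satisfies $\Delta v=2\mu v$, and it remains only to choose the $a_j$ so that $\int_{\CP^n}v^3\dv\neq0$.

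To evaluate this integral I would use the classical fact that the map $[z]\mapsto t=\big(|z_0|^2/|z|^2,\dots,|z_n|^2/|z|^2\big)$ pushes the normalised Fubini--Study volume forward to the uniform probability measure $\sigma$ on the standard simplex $\Delta^n=\{t\in\R^{n+1}:t_j\ge0,\ \sum_j t_j=1\}$, i.e.\ the Dirichlet distribution with all parameters equal to $1$, whose moments are $\int_{\Delta^n}t_0^{m_0}\cdots t_n^{m_n}\,d\sigma=n!\,m_0!\cdots m_n!/(n+\sum_j m_j)!$. Since $v$ depends only on $t$, expanding $(\sum_j a_jt_j)^3$, integrating term by term, and repeatedly using $\sum_j a_j=0$ to collapse the mixed sums, one obtains
\[\int_{\CP^n}v^3\dv=\frac{2\,\volume(\CP^n,g_{st})}{(n+1)(n+2)(n+3)}\sum_{j=0}^n a_j^3.\]
Because $n>1$ there are at least three coordinates available, so we may take $(a_0,a_1,a_2,0,\dots,0)=(1,1,-2,0,\dots,0)$: then $\sum_j a_j=0$ but $\sum_j a_j^3=-6\neq0$, hence $\int_{\CP^n}v^3\dv\neq0$. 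Applying Theorem \ref{Thm3} to this $v$ shows that $(\CP^n,g_{st})$ is dynamically unstable.

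Everything here except the two cited inputs — the identity $\lambda_1=2\mu$ for $g_{st}$ and the pushforward-to-the-simplex statement — is a direct computation, so I do not expect a serious obstacle; the only care needed is in the moment bookkeeping that produces the closed formula above. It is worth noting that the hypothesis $n>1$ is genuinely used: for $n=1$ the constraint $a_0+a_1=0$ forces $\sum_j a_j^3=0$, which is consistent with the fact that $\CP^1=S^2$ is the round sphere and hence dynamically stable.
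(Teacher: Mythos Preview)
Your argument is correct, and it takes a genuinely different route from the paper's. The paper also applies Theorem~\ref{Thm3}, but builds its eigenfunction from the \emph{off-diagonal} part of the Hermitian picture: it restricts $h=z_1\bar z_2+z_2\bar z_1+z_2\bar z_3+z_3\bar z_2+z_3\bar z_1+z_1\bar z_3\in H_{1,1}$ from $S^{2n+1}$ to $\CP^n$, then shows $\int_{S^{2n+1}}h^3\neq0$ by an explicit symmetry-based expansion and reads off the $\CP^n$ conclusion from the decomposition $P_{3,3}=H_{3,3}\oplus r^2H_{2,2}\oplus r^4H_{1,1}\oplus r^6H_{0,0}$. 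You instead take a \emph{diagonal} trace-free $A$, so that $v$ factors through the torus moment map to the simplex; the Dirichlet$(1,\dots,1)$ moment formula then gives you the closed expression $\int v^3=\frac{2\,\volume}{(n+1)(n+2)(n+3)}\sum_j a_j^3$, which makes both the nonvanishing and the role of the hypothesis $n>1$ completely transparent. The trade-off is that you import one external fact (the pushforward of the Fubini--Study volume to uniform measure on the simplex), whereas the paper's computation is entirely self-contained on the sphere but needs more bookkeeping and a harmonic-decomposition step. Both arguments hinge on the same structural point---that the first eigenspace of $\CP^n$ is $\{\,\langle Az,z\rangle/|z|^2:\trace A=0,\ A^*=A\,\}$ and that one can choose $A$ with $\int v_A^3\neq0$---but your moment-map computation is the more economical of the two.
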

 This result is quite unexpected since the complex projective space is linearly stable. In particular, it raises the question whether the round sphere is the only positive Einstein metric in four dimensions which is dynamically stable (c.f.\ \cite[p.\ 29]{Cao10}). 
\vspace{3mm}

\textbf{Acknowledgement.} This article is based on a part of the authors' PhD-thesis. The author would like to thank Christian B\"ar, Christian Becker and Robert Haslhofer for helpful discussions. Moreover, the 
author thanks the Max-Planck Institute for Gravitational Physics for financial support.
\section{Notation and conventions}
We define the Laplace-Beltrami operator acting on functions by $\Delta=-\trace \nabla^2$. For the Riemann curvature tensor, we use the sign convention such that
 $R_{X,Y}Z=\nabla^2_{X,Y}Z-\nabla^2_{Y,X}Z$. Given a fixed metric, we equip the bundle of $(r,s)$-tensor fields (and any subbundle) with the natural scalar product induced by the metric.
By $S^pM$, we denote the bundle of symmetric $(0,p)$-tensors.
The divergence $\delta:\Gamma(S^pM)\to\Gamma(S^{p-1}M)$ and its formal adjoint $\delta^*\colon\Gamma(S^{p-1}M)\to \Gamma(S^pM)$ are given by
\begin{align*}\delta T(X_1,\ldots,X_{p-1})=&-\sum_{i=1}^n\nabla_{e_i}T(e_i,X_1,\ldots,X_{p-1}),\\
            \delta^*T(X_1,\ldots,X_p)=&\frac{1}{p}\sum_{i=0}^{p-1}\nabla_{X_{1+i}}T(X_{2+i},\ldots,X_{p+i}),
\end{align*}
where the sums $1+i,\ldots,p+i$ are taken modulo $p$. 
For $\omega\in\Omega^1(M)$, we have $\delta^*\omega=\mathcal{L}_{\omega^{\sharp}}g$ where $\omega^{\sharp}$ is the sharp of $\omega$.
Thus, $\delta^*(\Omega^1(M))$ is the tangent space of the manifold $g\cdot \Diff(M)=\left\{\varphi^*g|\varphi\in\Diff(M)\right\}$.
The Einstein operator $\Delta_E$ and the Lichnerowicz Laplacian $\Delta_L$, both acting on $\Gamma(S^2M)$, are defined by
\begin{align*}
 \Delta_E h&=\nabla^*\nabla h-2\mathring{R}h,\\
 \Delta_L h&=\nabla^*\nabla h+\ric\circ h+h\circ \ric-2\mathring{R}h.
\end{align*}
Here, $\mathring{R}h(X,Y)=\sum_{i=1}^nh(R_{e_i,X}Y,e_i)$ and $\circ$ denotes the composition of symmetric $(0,2)$-tensors, considered as endomorphisms on $TM$.
\section{The expander entropy}
When considering the Ricci flow close to negative Einstein manifolds we may restrict to the case where the Einstein constant is equal to $-1$.
Such metrics are stationary points of the flow
\begin{align}\label{negativeflow}\dot{g}(t)=-2(\ric_{g(t)}+g(t)).
\end{align}
This flow is homothetically equivalent to the standard Ricci flow. In fact,
\begin{align*}\tilde{g}(t)=e^{-2t}g\left(\frac{1}{2}(e^{2t}-1)\right)
\end{align*}
is a solution of \eqref{negativeflow} starting at $g_0$ if and only if $g(t)$ is a solution of \eqref{ricciflow} starting at $g_0$.
Let $(M,g)$ be a Riemannian manifold and $f\in C^{\infty}(M)$. Define\index{$\mathcal{W}_+(g,f)$}
\begin{align*}\mathcal{W}_+(g,f)=\int_M \left[\frac{1}{2}(|\nabla f|^2+\scal)-f\right]e^{-f}\dv.
\end{align*}
This is a simpler variant of the expander entropy\index{expander entropy} $\mathcal{W}_+(g,f,\sigma)$\index{$\mathcal{W}_+(g,f,\sigma)$} introduced in \cite{FIN05}. 
\begin{lem}\label{Wvariation}The first variation of $\mathcal{W}_+$ at a tuple $(g,f)$ equals
\begin{align*}\mathcal{W}'_+(h,v)=\int_M &[-\frac{1}{2}\langle\ric+\nabla^2 f-(-\Delta f-\frac{1}{2}|\nabla f|^2+\frac{1}{2}\scal -f)g,h\rangle\\
                                                          &-(-\Delta f-\frac{1}{2}|\nabla f|^2+\frac{1}{2}\scal-f+1)v]e^{-f}\dv.
\end{align*} 
\end{lem}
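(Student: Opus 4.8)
The plan is to compute the first variation of $\mathcal{W}_+$ directly and then integrate by parts against the weighted measure $e^{-f}\dv$. Write $\mathcal{W}_+(g,f)=\int_M u\cdot e^{-f}\dv$ with $u=\frac{1}{2}(|\nabla f|^2+\scal)-f$, and vary $g\mapsto g+sh$, $f\mapsto f+sv$. The routine ingredients are $\frac{d}{ds}\dv=\frac{1}{2}(\trace h)\dv$ and $\frac{d}{ds}e^{-f}=-ve^{-f}$, so that $\frac{d}{ds}(e^{-f}\dv)=(-v+\frac{1}{2}\trace h)e^{-f}\dv$; the pointwise identity $\frac{d}{ds}|\nabla f|^2=-h(\nabla f,\nabla f)+2\langle\nabla f,\nabla v\rangle$; and the first variation of scalar curvature, which in the sign conventions of Section~2 reads $\frac{d}{ds}\scal=\Delta(\trace h)+\delta\delta h-\langle\ric,h\rangle$. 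Combining these, $\mathcal{W}'_+(h,v)=\int_M[u'+u(-v+\frac{1}{2}\trace h)]e^{-f}\dv$ with
\[u'=-\tfrac{1}{2}h(\nabla f,\nabla f)+\langle\nabla f,\nabla v\rangle+\tfrac{1}{2}\Delta(\trace h)+\tfrac{1}{2}\delta\delta h-\tfrac{1}{2}\langle\ric,h\rangle-v.\]

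The substance of the proof is moving all derivatives off $h$ and $v$. First I would use self-adjointness of $\Delta$ together with $\Delta(e^{-f})=-e^{-f}(\Delta f+|\nabla f|^2)$ to rewrite $\int_M\frac{1}{2}\Delta(\trace h)\,e^{-f}\dv=-\frac{1}{2}\int_M(\Delta f+|\nabla f|^2)(\trace h)\,e^{-f}\dv$. Next, since $\delta^{*}$ is the formal adjoint of $\delta$ and a short computation gives $\delta^{*}(e^{-f}df)=e^{-f}(\nabla^2 f-df\otimes df)$, the term integrates as $\int_M\frac{1}{2}\delta\delta h\,e^{-f}\dv=\int_M[-\frac{1}{2}\langle h,\nabla^2 f\rangle+\frac{1}{2}h(\nabla f,\nabla f)]e^{-f}\dv$; the pleasant point is that the resulting $\frac{1}{2}h(\nabla f,\nabla f)$ cancels the $-\frac{1}{2}h(\nabla f,\nabla f)$ coming from the variation of $|\nabla f|^2$, so no $df\otimes df$ term survives. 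Finally the weighted Green identity $\int_M\langle\nabla f,\nabla v\rangle e^{-f}\dv=\int_M v(\Delta f+|\nabla f|^2)e^{-f}\dv$ disposes of the last gradient term.

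After this it is pure bookkeeping. Abbreviating $Q:=-\Delta f-\frac{1}{2}|\nabla f|^2+\frac{1}{2}\scal-f$, the terms containing $h$ collapse to $-\frac{1}{2}\langle\ric,h\rangle-\frac{1}{2}\langle\nabla^2 f,h\rangle+\frac{1}{2}Q\,\trace h=-\frac{1}{2}\langle\ric+\nabla^2 f-Qg,h\rangle$ (using $\trace h=\langle g,h\rangle$), while the terms containing $v$ collapse to $[(\Delta f+|\nabla f|^2)-1-(\frac{1}{2}|\nabla f|^2+\frac{1}{2}\scal-f)]v=-(Q+1)v$. This is exactly the stated formula. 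The only real difficulty is not conceptual but sign bookkeeping — the geometer's Laplacian $\Delta=-\trace\nabla^2$, the sign in $\delta h=-\nabla^{i}h_{ij}$, and the correct sign of the $\Delta(\trace h)$ term in the variation of $\scal$ — and I would cross-check the last of these on the conformal family $h=2\phi g$, where the formula must reproduce $\frac{d}{ds}\scal=2(n-1)\Delta\phi-2\phi\scal$.
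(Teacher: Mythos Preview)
Your proof is correct and follows essentially the same route as the paper: split into the variation of the integrand and the variation of the weighted measure, use the standard first-variation formula for $\scal$, and integrate the $\Delta(\trace h)$, $\delta\delta h$, and $\langle\nabla f,\nabla v\rangle$ terms by parts against $e^{-f}$. The only cosmetic difference is that the paper handles the $\delta\delta h$ term via $\langle h,\nabla^2(e^{-f})\rangle$ whereas you phrase it through $\delta^*(e^{-f}df)=e^{-f}(\nabla^2 f-df\otimes df)$; these are the same identity, and your explicit remark about the cancellation of the $h(\nabla f,\nabla f)$ terms and the conformal cross-check are nice touches.
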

\begin{proof}Let $g_t=g+th$ and $f_t=f+tv$. We have
\begin{align*}\frac{d}{dt}|_{t=0}\mathcal{W}_+(g_t,f_t)=&\int_M [\frac{1}{2}(|\nabla f_t|_{g_t}^2+\scal_{g_t})-f_t]'e^{-f}\dv\\
                                                          &+\int_M [\frac{1}{2}(|\nabla f|^2+\scal)-f](-v+\frac{1}{2}\trace h)e^{-f}\dv.
\end{align*}
By the variational formula of the scalar curvature (see Lemma \ref{firstcuvature}),
\begin{align*}\int_M [\frac{1}{2}(|\nabla f_t|_{g_t}^2+\scal_{g_t})-f_t]'e^{-f}\dv
               =&\int_M (-\frac{1}{2}\langle h,\nabla f\otimes \nabla f\rangle+ \langle \nabla f,\nabla v\rangle)e^{-f}\dv\\
                                                                    &+\int_M [\frac{1}{2}(\Delta \trace h+\delta(\delta h)-\langle \ric,h\rangle)-v]e^{-f}\dv.
\end{align*}
By integration by parts,
\begin{align*} \int_M\langle \nabla f,\nabla v\rangle e^{-f}\dv=\int_M (\Delta f+|\nabla f|^2)v e^{-f}\dv\end{align*}
and
\begin{align*}\int_M\frac{1}{2}(\Delta \trace h+\delta(\delta h))e^{-f}\dv&=\int_M \frac{1}{2}[\trace h \Delta(e^{-f})+\langle h,\nabla^2 (e^{-f})\rangle]\dv \\
&=\int_M \frac{1}{2}[\trace h (-\Delta f-|\nabla f|^2)+\langle h,-\nabla^2 f+\nabla f\otimes \nabla f\rangle]e^{-f}\dv.
\end{align*}
Thus, 
\begin{align*}\int_M [\frac{1}{2}(|\nabla f_t|_{g_t}^2+\scal_{g_t})-f_t]'e^{-f}\dv=\int_M &[-\frac{1}{2}\langle h,\nabla^2 f+\ric+(\Delta f+|\nabla f|^2)g\rangle\\
                                                                    & +(\Delta f+|\nabla f|^2-1)v]e^{-f}\dv.
\end{align*}
The second term of above can be written as
\begin{align*}\int_M [\frac{1}{2}&(|\nabla f|^2+\scal)-f](-v+\frac{1}{2}\trace h)e^{-f}\dv\\
=&\int_M [\frac{1}{2}\langle[\frac{1}{2}(|\nabla f|^2+\scal)-f]g,h\rangle-[\frac{1}{2}(|\nabla f|^2+\scal)-f]v ]e^{-f}\dv.
\end{align*}
By adding up these two terms, we obtain the desired formula.
\end{proof}
\noindent
Now we consider the functional\index{$\mu_+(g)$, expander entropy}
\begin{align}\mu_+(g)=\inf\left\{\mathcal{W}_+(g,f)\suchthat{ f\in C^{\infty}(M),\text{ }\int_M e^{-f} \dv=1} f\in C^{\infty}(M),\text{ }\int_M e^{-f} \dv=1\right\}.
\end{align}
\index{mu@$\mu_+$-functional}It was shown in \cite[Thm 1.7]{FIN05} that given any smooth metric, 
the infimum is always uniquely realized by a smooth function. We call the minimizer\index{minimizer} $f_g$\index{$f_g$, minimizer realizing $\mu_+(g)$}. The minimizer depends smoothly on the metric. 
From Lemma \ref{Wvariation}, we can show that $f_g$ satisfies the Euler-Lagrange equation\index{Euler-Lagrange equation}
\begin{align}\label{mueulerlagrange}-\Delta f_g-\frac{1}{2}|\nabla f_g|^2+\frac{1}{2}\scal_g -f_g=\mu_+(g).\end{align}
\begin{rem}Observe that $\mathcal{W}_+(\varphi^*g,\varphi^*f)=\mathcal{W}_+(g,f)$ for any diffeomorphism $\varphi$ and thus, $\mu_+(g)$ is invariant under diffeomorphisms.
\end{rem}
\begin{lem}[First variation of $\mu_+$]\label{mufirstvariation}\index{first variation!of $\mu_+$}The first variation of $\mu_+(g)$ is given by
\begin{align}\mu_+(g)'(h)=-\frac{1}{2}\int_M \langle \ric+g+\nabla^2 f_g,h\rangle e^{-f_g}\dv,
\end{align}
where $f_g$ realizes $\mu_+(g)$.
As a consequence, $\mu_+$ is nondecreasing under the Ricci flow \eqref{negativeflow}.
\end{lem}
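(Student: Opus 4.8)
The plan is to differentiate the identity $\mu_+(g)=\mathcal{W}_+(g,f_g)$ directly, using that the minimizer $f_g$ depends smoothly on $g$ (recalled above from \cite{FIN05}). Set $g_t=g+th$, $f_t=f_{g_t}$ and $\dot f=\frac{d}{dt}\big|_{t=0}f_t$. By the chain rule and Lemma \ref{Wvariation},
\begin{align*}\mu_+(g)'(h)=\mathcal{W}'_+(h,\dot f),\end{align*}
the right-hand side being the expression of Lemma \ref{Wvariation} evaluated at the tuple $(g,f_g)$ with $v=\dot f$.

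Next I would insert the Euler--Lagrange equation \eqref{mueulerlagrange}, i.e.\ $-\Delta f_g-\frac12|\nabla f_g|^2+\frac12\scal_g-f_g=\mu_+(g)$, into that formula. This turns the scalar $-\Delta f_g-\frac12|\nabla f_g|^2+\frac12\scal_g-f_g$ into the constant $\mu_+(g)$ wherever it occurs, so that
\begin{align*}\mathcal{W}'_+(h,\dot f)=\int_M\Big[-\tfrac12\langle\ric+\nabla^2 f_g-\mu_+(g)\,g,h\rangle-(\mu_+(g)+1)\,\dot f\Big]e^{-f_g}\dv.\end{align*}
To eliminate the $\dot f$-term I would differentiate the constraint $\int_M e^{-f_t}\dv_{g_t}=1$ at $t=0$, which gives
\begin{align*}\int_M\dot f\,e^{-f_g}\dv=\tfrac12\int_M\trace_g h\,e^{-f_g}\dv=\tfrac12\int_M\langle g,h\rangle\,e^{-f_g}\dv.\end{align*}
Substituting this, the contribution $-(\mu_+(g)+1)\int_M\dot f\,e^{-f_g}\dv$ becomes $-\tfrac12(\mu_+(g)+1)\int_M\langle g,h\rangle e^{-f_g}\dv$; adding it to the $\mu_+(g)\,g$ term cancels the $\mu_+(g)$-dependence and leaves precisely $-\frac12\int_M\langle\ric+g+\nabla^2 f_g,h\rangle e^{-f_g}\dv$, which is the asserted formula.

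For the monotonicity claim I would use the diffeomorphism invariance of $\mu_+$ stated in the preceding remark: $\mu_+(\varphi^*g)=\mu_+(g)$ implies that $\mu_+(g)'$ annihilates the tangent space $\delta^*(\Omega^1(M))$ of the orbit $g\cdot\Diff(M)$, and in particular the element $\nabla^2 f_g=\delta^*(df_g)$. From the first-variation formula just proved this reads
\begin{align*}\int_M\langle\ric+g+\nabla^2 f_g,\nabla^2 f_g\rangle\,e^{-f_g}\dv=0.\end{align*}
Along the flow \eqref{negativeflow} one has $h=\dot g=-2(\ric+g)$, so
\begin{align*}\frac{d}{dt}\mu_+(g(t))=\int_M\langle\ric+g+\nabla^2 f_g,\ric+g\rangle e^{-f_g}\dv=\int_M|\ric+g+\nabla^2 f_g|^2\,e^{-f_g}\dv\geq0,\end{align*}
where the middle equality uses the vanishing of the cross-term. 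Hence $\mu_+$ is nondecreasing under \eqref{negativeflow}.

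I expect the only genuinely delicate point to be the justification that $g\mapsto f_g$ is differentiable and that $\dot f$ is an admissible smooth variation tangent to the constraint; but this is exactly the smoothness statement quoted from \cite{FIN05}, so it may be invoked without proof. The remainder is bookkeeping — substituting \eqref{mueulerlagrange}, linearizing the constraint, and using diffeomorphism invariance to complete the square.
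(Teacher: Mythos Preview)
Your proof is correct and follows essentially the same approach as the paper. The paper's proof is much terser---it simply says the formula ``follows from Lemma \ref{Wvariation} and \eqref{mueulerlagrange}'' and then invokes diffeomorphism invariance exactly as you do---but your version makes explicit the necessary step of differentiating the constraint $\int_M e^{-f_t}\dv_{g_t}=1$ to eliminate the $\dot f$-term, which the paper leaves to the reader.
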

\begin{proof}The first variational formula follows from Lemma \ref{Wvariation} and (\ref{mueulerlagrange}).
By diffeomorphism invariance\index{diffeomorphism!invariance},
\begin{align*}\mu_+(g)'(\nabla^2f_g)=\frac{1}{2}\mu_+'(g)(\mathcal{L}_{\gradient f_g}g)=0.
\end{align*}
Thus, if $g(t)$ is a solution of \eqref{negativeflow},
\begin{equation*}\frac{d}{dt}\mu_+(g(t))=\int_M  |\ric_{g(t)}+g(t)+\nabla^2 f_{g(t)}|^2 e^{-f_{g(t)}}\dv_{g(t)}\geq 0.
\qedhere
\end{equation*}
\end{proof}
\begin{rem}We call metrics gradient Ricci solitons if $\ric_g+\nabla^2 f=cg$ for some $f\in C^{\infty}(M)$ and $c\in\R$. 
In the compact case, any such metric is already Einstein if $c\leq0$ (see \cite[Proposition 1.1]{Cao10}). By the first variational formula of $\mu_+$\index{first variation!of $\mu_+$}, we conclude that
Einstein metrics with constant $-1$ are precisely the critical\index{critical} points of $\mu_+$.
\end{rem} 
\begin{lem}\label{f_{g_E}}Let $(M,g_E)$ be an Einstein manifold with constant $-1$. Furthermore, let $h\in \delta_{g_E}^{-1}(0)$. Then
\begin{itemize}
\item[(i)]$f_{g_E}\equiv \log\volume(M,g_E)$,
\item[(ii)]$\frac{d}{dt}|_{t=0}f_{g_E+th}=\frac{1}{2}\trace_{g_E}h$,
\item[(iii)]$\frac{d}{dt}|_{t=0}( \ric_{g_E+th}+g_E+th+\nabla_{g_E+th}^2 f_{g_E+th})=\frac{1}{2}\Delta_Eh,$
\end{itemize}
where $\Delta_E$ is the Einstein operator.
\end{lem}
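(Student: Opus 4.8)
The plan is to verify the three formulas by direct computation at the Einstein metric, using the Euler--Lagrange equation \eqref{mueulerlagrange} and the first variation formulas from Lemma \ref{Wvariation} and Lemma \ref{mufirstvariation}, specialized to the situation at hand. Throughout, I would write $g=g_E$, $V=\volume(M,g_E)$, and use that $\ric_{g_E}=-g_E$ and $\scal_{g_E}=-n$.

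For (i): the constant function $f\equiv \log V$ satisfies $\int_M e^{-f}\dv_{g_E}=1$, and plugging it into \eqref{mueulerlagrange} gives $-\Delta f-\tfrac12|\nabla f|^2+\tfrac12\scal_{g_E}-f=-\tfrac{n}{2}-\log V$, a constant; hence it solves the Euler--Lagrange equation with $\mu_+(g_E)=-\tfrac n2-\log V$. Since the minimizer is unique (by \cite[Thm 1.7]{FIN05}, as recalled in the text), we conclude $f_{g_E}\equiv\log V$. This also records $\mu_+(g_E)$ for later use.

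For (ii): I would differentiate the constraint and the Euler--Lagrange equation along $g_t=g_E+th$ with $h\in\delta_{g_E}^{-1}(0)$. Write $f_t=f_{g_t}$, $\dot f=\tfrac{d}{dt}|_{t=0}f_t$. Differentiating $\int_M e^{-f_t}\dv_{g_t}=1$ at $t=0$ and using $f_0=\log V$ constant gives $\int_M(-\dot f+\tfrac12\trace h)e^{-f_0}\dv=0$, i.e. $\int_M(\dot f-\tfrac12\trace h)\dv=0$. To pin down $\dot f$ pointwise I would linearize \eqref{mueulerlagrange}: using the standard variation formulas (the linearization of $\Delta f$ acting on the constant $f_0$ vanishes except through the metric, $\delta_{g_E}h=0$, and $\tfrac12\scal_{g_t}'=\tfrac12(\Delta\trace h+\delta\delta h-\langle\ric,h\rangle)=\tfrac12(\Delta\trace h+\trace h)$ at an Einstein metric), one obtains an elliptic equation of the form $(\Delta+\text{const})(\dot f-\tfrac12\trace h)=0$, from which $\dot f=\tfrac12\trace h$, possibly after checking that the relevant operator is invertible on the appropriate space. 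Combined with the integral identity this gives (ii). (Alternatively, one can differentiate the first variation formula of Lemma \ref{mufirstvariation} and compare with the second variation, but the direct linearization of the Euler--Lagrange equation is cleanest.)

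For (iii): I would expand $\tfrac{d}{dt}|_{t=0}\bigl(\ric_{g_t}+g_t+\nabla^2_{g_t}f_t\bigr)$ term by term. The well-known linearization of the Ricci tensor on transverse traceless-type variations gives $\ric'(h)=\tfrac12\nabla^*\nabla h-\mathring R h-\delta^*\delta h-\tfrac12\nabla^2\trace h$ (Lichnerowicz's formula), which on $h\in\delta_{g_E}^{-1}(0)$ simplifies; the variation of $g_t$ contributes $h$; and the variation of $\nabla^2_{g_t}f_t$ splits into the change of the Hessian operator applied to the constant $f_0$ (which involves only first derivatives of $h$, producing a $\delta^*\delta h$-type and $\nabla^2\trace h$-type term through the Christoffel variation) plus $\nabla^2_{g_E}\dot f=\tfrac12\nabla^2\trace h$ using (ii). Collecting everything, the $\delta^*\delta h$ terms should cancel against those from $\ric'(h)$, the $\nabla^2\trace h$ terms should cancel, and one is left with $\tfrac12(\nabla^*\nabla h-2\mathring R h)=\tfrac12\Delta_E h$. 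The main obstacle is bookkeeping: getting the variation of the Hessian of a function correct (it is $(\nabla^2_{g_t}f_t)'(X,Y)=\nabla^2_{g_E}\dot f(X,Y)-df_0(\Gamma'(h)(X,Y))$, and the second piece vanishes since $df_0=0$, so in fact only $\nabla^2\dot f$ survives — this is a simplification worth exploiting), and then making sure the Einstein condition $\ric_{g_E}=-g_E$ is used consistently so that the $\ric\circ h+h\circ\ric$ terms relating $\Delta_L$ and $\Delta_E$ are accounted for correctly. I expect that once $df_0=0$ is used, the computation collapses quickly to the claimed identity.
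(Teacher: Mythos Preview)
Your approach for (ii) and (iii) is essentially identical to the paper's: differentiate the Euler--Lagrange equation \eqref{mueulerlagrange} to get $(\Delta+1)(f'-\tfrac12\trace h)=0$ (the constant is exactly $1$, and $\Delta+1$ is invertible since $\Delta\geq0$), then assemble the gradient using $\ric'(h)=\tfrac12\Delta_L h-\delta^*\delta h-\tfrac12\nabla^2\trace h$, the vanishing of $(\nabla^2)'f_0$ because $df_0=0$, and $\Delta_L=\Delta_E-2\,\identity$ at an Einstein metric with constant $-1$.

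For (i) your argument has a small logical gap: you verify that $f\equiv\log V$ solves the Euler--Lagrange equation and then invoke uniqueness of the \emph{minimizer}, but satisfying \eqref{mueulerlagrange} only makes $f$ a constrained critical point, not a priori the minimizer. The paper instead substitutes $w=e^{-f/2}$ and uses Jensen's inequality to obtain the lower bound $\widetilde{\mathcal{W}}(w)\geq \tfrac12\inf\scal-\log V$, which the constant $w\equiv V^{-1/2}$ realizes; this shows directly that $f\equiv\log V$ is the minimizer. Your route can be repaired by noting that the functional is strictly convex in $w$ (which is in fact how uniqueness is proved in \cite{FIN05}), so critical points are minimizers, but as written the step is not justified.
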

\begin{proof}By substituting $w=e^{- f/2}$, we see that $w_{g_E}=e^{-f_{g_E}/2}$ is the minimizer of the functional
$$\widetilde{\mathcal{W}}(w)=\int_M 2|\nabla w|^2+\frac{1}{2}\scal w^2+w^2\log w^2 \dv $$
under the constraint
$$\left\|w\right\|_{L^2}=1.$$
By Jensen's inequality\index{Jensen's inequality}, we have a lower bound
\begin{align}\label{lowermu}\widetilde{\mathcal{W}}(w)\geq \frac{1}{2}\inf_{p\in M} \scal(p)-\log(\volume(M,g_E)),
\end{align}
which is realized by the constant function $w_{g_E}\equiv \volume(M,g_E)^{-1/2}$ since the scalar curvature is constant on $M$. This proves (i).
To prove (ii), we differentiate the Euler-Lagrange equation\index{Euler-Lagrange equation} \eqref{mueulerlagrange} in the direction of $h$. We obtain
\begin{align*}0=(-\Delta f)'-\frac{1}{2}(|\nabla f|^2)'+\frac{1}{2}\scal' -f'& =-(\Delta+1) f'+\frac{1}{2}(\Delta+1)\trace h.
\end{align*}
Here we used that $f_{g_E}$ is constant and $\delta h=0$.
The second assertion follows. It remains to show (iii). By straightforward differentiation,
\begin{align*}( \ric+g+\nabla^2 f)'= \frac{1}{2}\Delta_Lh-\frac{1}{2}\nabla^2\trace h+h+\frac{1}{2}\nabla^{2}\trace h= \frac{1}{2}\Delta_Eh.
\end{align*}
Here we used Lemma \ref{firstcuvature}, (i) and (ii).
\end{proof}
\begin{prop}[Second variation of $\mu_+$]\label{hessianmu}\index{second variation!of $\mu_+$}The second variation of $\mu_+$ at an Einstein metric satisfying $\ric_{g_E}=-g_E$ is given by
\begin{align*}\mu_+(g_E)''(h)=
\begin{cases} -\frac{1}{4}\fint_M\langle \Delta_E h,h\rangle \dv,& \text{ if }h\in \delta^{-1}(0),\\
              0,& \text{ if }h\in \delta^{*}(\Omega^1(M)),
\end{cases}
\end{align*}
where $\fint$ denotes the averaging integral\index{averaging integral}\index{$\fint$, averaging integral}, i.e.\ the integral divided by the volume.
\end{prop}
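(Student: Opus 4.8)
The plan is to read off the Hessian by differentiating the first variation formula of Lemma~\ref{mufirstvariation} once more and exploiting that an Einstein metric with constant $-1$ is a critical point of $\mu_+$. Abbreviate $A(g):=\ric_g+g+\nabla^2_g f_g\in\Gamma(S^2M)$, so that Lemma~\ref{mufirstvariation} reads $\mu_+(g)'(h)=-\frac{1}{2}\int_M\langle A(g),h\rangle_g\, e^{-f_g}\dv_g$. At $g_E$ we have $\ric_{g_E}=-g_E$ and, by Lemma~\ref{f_{g_E}}(i), $f_{g_E}$ is constant, hence $\nabla^2_{g_E}f_{g_E}=0$ and $A(g_E)=0$; in particular the first variation $\mu_+(g_E)'$ vanishes identically.

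The next step is to differentiate $t\mapsto\mu_+(g_E+th)'(h)$ at $t=0$ by the product rule. Besides the term in which $A$ itself is differentiated, this produces further terms coming from the $t$-dependence of the metric hidden inside $\langle\cdot,\cdot\rangle_{g_E+th}$ and of the weighted measure $e^{-f_{g_E+th}}\dv_{g_E+th}$; each of the latter is a contraction involving $A(g_E)=0$ and hence drops out. What survives is
\begin{align*}
\mu_+(g_E)''(h)=-\frac{1}{2}\int_M\big\langle \tfrac{d}{dt}\big|_{t=0}A(g_E+th),\,h\big\rangle_{g_E}\, e^{-f_{g_E}}\dv_{g_E},
\end{align*}
and since $e^{-f_{g_E}}=\volume(M,g_E)^{-1}$ by Lemma~\ref{f_{g_E}}(i), it only remains to identify the linearization $\tfrac{d}{dt}|_{t=0}A(g_E+th)$ in the two cases.

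For $h\in\delta^{-1}(0)$, Lemma~\ref{f_{g_E}}(iii) gives $\tfrac{d}{dt}|_{t=0}A(g_E+th)=\frac{1}{2}\Delta_E h$, which immediately yields $\mu_+(g_E)''(h)=-\frac{1}{4}\fint_M\langle\Delta_E h,h\rangle\dv$. For $h=\delta^*\omega=\mathcal{L}_{\omega^{\sharp}}g_E$ I would instead use that $g\mapsto A(g)$ is equivariant under diffeomorphisms: $\ric$ and $g$ are, and the minimizer obeys $f_{\varphi^*g}=f_g\circ\varphi$ (by uniqueness of the minimizer together with $\mathcal{W}_+(\varphi^*g,\varphi^*f)=\mathcal{W}_+(g,f)$), so $A(\varphi^*g)=\varphi^*A(g)$. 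Since the curves $g_E+th$ and $\varphi_t^*g_E$ (with $\varphi_t$ the flow of $\omega^{\sharp}$) have the same velocity $h$ at $t=0$, the linearization of $A$ agrees on them, and differentiating the identity gives $\tfrac{d}{dt}|_{t=0}A(g_E+th)=\mathcal{L}_{\omega^{\sharp}}A(g_E)=0$, hence $\mu_+(g_E)''(h)=0$. Equivalently, since $g_E$ is a critical point and $\mu_+$ is diffeomorphism invariant, the second variation along a direction tangent to the orbit $g_E\cdot\Diff(M)$ must vanish.

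The only point requiring genuine care is the vanishing, at $t=0$, of the ``extra'' product-rule terms above: this is exactly where the critical-point identity $A(g_E)=0$ enters, and one should also invoke the smooth dependence of $f_g$ on $g$ (recalled just before \eqref{mueulerlagrange}) to make the differentiation legitimate. Everything else is a direct bookkeeping consequence of Lemmas~\ref{mufirstvariation} and~\ref{f_{g_E}}.
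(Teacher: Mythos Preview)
Your proof is correct and follows essentially the same approach as the paper's: differentiate the first variation formula, use $A(g_E)=0$ to kill the extra product-rule terms, invoke Lemma~\ref{f_{g_E}}(i),(iii) for the divergence-free case, and diffeomorphism invariance for the Lie-derivative case. The paper is terser (it just says ``since $\mu_+$ is a Riemannian functional, the Hessian restricted to $\delta^*(\Omega^1(M))$ vanishes'') and adds the remark that $\Delta_E$ preserves $\delta^{-1}(0)$ so that the splitting is $\mu_+''$-orthogonal; your argument actually yields this orthogonality as well, since you show the linearization of $A$ itself vanishes in the Lie-derivative direction, not merely the quadratic form.
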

\begin{proof}Recall that the space of symmetric $(0,2)$-tensors splits as $\Gamma(S^2M)=\delta^{*}(\Omega^1(M))\oplus\delta^{-1}(0)$.
Since $\mu_+$ is a Riemannian functional, the Hessian restricted to $\delta^{*}(\Omega^1(M))$ vanishes. Now let $h\in\delta^{-1}(0)$. By the first variational formula\index{first variation!of $\mu_+$} and
Lemma \ref{f_{g_E}} (i) and (iii),
\begin{align*}\mu_+(g_E)''(h)=-\frac{1}{4}\fint_M\langle \Delta_E h,h\rangle \dv.
\end{align*}
Since $\delta(\Delta_E h)=\delta((\Delta_L+2 )h)=(\Delta_H+2)(\delta h)$ \cite[pp. 28-29]{Lic61}, $\Delta_E$ preserves $\delta^{-1}(0)$. 
Here, $\Delta_H$ is the Hodge-Laplacian acting on one-forms. 
Thus, the splitting $\delta^{*}(\Omega^1(M))\oplus \delta^{-1}(0)$ is orthogonal with respect to $\mu_+''$.
\end{proof}
\section{Some technical estimates}\label{technicalestimatesI}
In this section, we will establish bounds on $\mu_+$, $f_g$ and their variations in terms of certain norms of the variations. These estimates are needed in proving the main theorems of the next two sections.
\begin{lem}\label{goodf_g}Let $(M,g_E)$ be an Einstein manifold such that $\ric=-g_E$. Then there exists a $C^{2,\alpha}$-neighbourhood
$\mathcal{U}$ in the space of metrics such that
the minimizers $f_g$ are uniformly bounded in $C^{2,\alpha}$, i.e.\ there exists a constant $C>0$ such that $\left\| f_g\right\|_{C^{2,\alpha}}\leq C$ for all $g\in \mathcal{U}$.
Moreover, for each $\epsilon>0$, we can choose $\mathcal{U}$ so small that $\left\| \nabla f_g\right\|_{C^0}\leq \epsilon$ for all $g\in \mathcal{U}$.
\end{lem}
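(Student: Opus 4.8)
The plan is to combine the Euler--Lagrange equation \eqref{mueulerlagrange} with the maximum principle and standard elliptic regularity, keeping track that every constant obtained is uniform over a small neighbourhood of $g_E$. The steps are: (1) two-sided uniform bounds on $\mu_+(g)$; (2) a uniform $C^0$ bound on $f_g$ via the maximum principle; (3) an elliptic bootstrap (performed on $w_g=e^{-f_g/2}$) upgrading this to a uniform $C^{2,\alpha}$ bound; (4) a compactness argument giving $f_g\to f_{g_E}$, hence $\nabla f_g\to 0$, as $g\to g_E$.

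For step (1): using the constant function $f=\log\volume(M,g)$ as a competitor in the definition of $\mu_+$ gives $\mu_+(g)\le\tfrac12\fint_M\scal_g\dv-\log\volume(M,g)$, while discarding the gradient term in $\widetilde{\mathcal W}$ and applying Jensen's inequality to $t\mapsto t\log t$ against the normalised volume measure --- exactly as in the proof of Lemma \ref{f_{g_E}}, now carried out for an arbitrary metric --- yields $\mu_+(g)\ge\tfrac12\inf_M\scal_g-\log\volume(M,g)$. Since $\scal_g$ and $\volume(M,g)$ depend continuously on $g$ in the $C^{2,\alpha}$-topology, both bounds are uniform on a sufficiently small neighbourhood $\mathcal U$ of $g_E$. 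For step (2): evaluating \eqref{mueulerlagrange} at a maximum point $p$ of $f_g$ on the compact manifold $M$ gives $\nabla f_g(p)=0$ and $\Delta f_g(p)\ge0$, whence $f_g(p)\le\tfrac12\scal_g(p)-\mu_+(g)$, and at a minimum point $q$ one has $\nabla f_g(q)=0$, $\Delta f_g(q)\le0$, whence $f_g(q)\ge\tfrac12\scal_g(q)-\mu_+(g)$. Together with step (1) this yields $\|f_g\|_{C^0}\le C$ uniformly on $\mathcal U$; in particular $w_g=e^{-f_g/2}$ is bounded, and bounded away from $0$, uniformly.

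The uniform $C^{2,\alpha}$ bound then follows by elliptic bootstrapping, and it is essential here to pass to $w_g=e^{-f_g/2}$ as in the proof of Lemma \ref{f_{g_E}}: whereas $f_g$ satisfies an equation with a $|\nabla f_g|^2$ nonlinearity, $w_g$ solves the semilinear equation $\Delta w_g=\big(\tfrac12\mu_+(g)-\tfrac14\scal_g\big)w_g-w_g\log w_g$, which has no gradient term. By step (2) the right-hand side is uniformly bounded in $C^0$, so $L^p$ elliptic estimates give a uniform $W^{2,p}$ bound for every $p$, hence a uniform $C^{1,\beta}$ bound; since $w_g\ge c>0$, the composition $w_g\log w_g$ is then uniformly bounded in $C^{1,\beta}$, and $\scal_g$ is uniformly bounded in $C^\alpha$ precisely because $g\in\mathcal U$ is $C^{2,\alpha}$-close to $g_E$. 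Schauder estimates --- whose constants stay uniform since the coefficients of $\Delta_g$ vary continuously in $C^\alpha$ near $g_E$ --- now yield a uniform $C^{2,\alpha}$ bound on $w_g$, and hence on $f_g=-2\log w_g$. This proves the first assertion.

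For the last statement, note that $\nabla f_{g_E}=0$ by Lemma \ref{f_{g_E}}(i), so it suffices to show $f_g\to f_{g_E}$ in $C^1$ as $g\to g_E$ in $C^{2,\alpha}$. If this failed there would be metrics $g_i\to g_E$ with $\|f_{g_i}-f_{g_E}\|_{C^1}\ge\delta>0$; by the uniform $C^{2,\alpha}$ bound and the Arzel\`a--Ascoli theorem some subsequence satisfies $f_{g_i}\to f_\infty$ in $C^2$, with $\int_M e^{-f_\infty}\dv_{g_E}=1$. Since $\scal_{g_E}$ is constant the two bounds on $\mu_+$ obtained in step (1) coincide at $g_E$, so they squeeze $\mu_+(g_i)\to\mu_+(g_E)$, and therefore $\mathcal W_+(g_E,f_\infty)=\lim_i\mathcal W_+(g_i,f_{g_i})=\lim_i\mu_+(g_i)=\mu_+(g_E)$, i.e.\ $f_\infty$ realises the infimum defining $\mu_+(g_E)$. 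By uniqueness of the minimiser \cite[Thm 1.7]{FIN05} we get $f_\infty=f_{g_E}$, contradicting $\|f_{g_i}-f_{g_E}\|_{C^1}\ge\delta$. Hence $\nabla f_g\to0$ in $C^0$, and $\mathcal U$ may be shrunk so that $\|\nabla f_g\|_{C^0}\le\epsilon$. The only genuine obstacle throughout is the uniformity of the elliptic constants and of the pointwise bounds on $w_g$ over $\mathcal U$; this is automatic because $g_E$ is fixed and all the relevant quantities are $C^{2,\alpha}$-continuous in the metric.
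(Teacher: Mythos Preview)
Your proof is correct, and in fact takes a somewhat cleaner route than the paper.

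The key difference is your step (2). The paper does \emph{not} use the maximum principle; instead it starts the bootstrap for $w_g=e^{-f_g/2}$ from only an $H^1$ bound (obtained from the energy functional $\widetilde{\mathcal W}$ and the upper bound on $\mu_+$), then runs a Moser-type iteration: Sobolev embedding gives $w_g\in L^{2n/(n-2)}$, $L^q$-elliptic regularity on the equation for $w_g$ gives $W^{2,q}$, and repeating gives $W^{2,p}$ for every $p$, hence $C^{0,\alpha}$, and finally $C^{2,\alpha}$ via Schauder. Only \emph{after} this does the paper establish that $w_g$ is bounded away from zero, by a separate compactness/contradiction argument. Your maximum principle gives both a $C^0$ upper bound on $f_g$ and a uniform positive lower bound on $w_g$ in one stroke, so your bootstrap can start directly from $L^\infty$ and you never need the iteration from $H^1$ or a separate argument for $\inf w_g>0$. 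This is shorter and more elementary.

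The final step, the compactness argument for $\|\nabla f_g\|_{C^0}\le\epsilon$, is essentially the same in both proofs; your squeeze argument for $\mu_+(g_i)\to\mu_+(g_E)$ is a slightly more explicit version of what the paper invokes.
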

\begin{proof}
As in the proof of Lemma \ref{f_{g_E}} (i), we use the fact that  
\begin{align}\label{widetildeW}\mu_+(g)=\inf_{w\in C^{\infty}(M)}\widetilde{\mathcal{W}}(g,w)=\inf\int_M 2|\nabla w|^2+\frac{1}{2}\scal w^2+w^2\log w^2 \dv 
\end{align}
under the constraint
$$\left\|w\right\|_{L^2}=1.$$
There exists a unique minimizer\index{minimizer} of this functional which we denote by $w_g$. We have $w_g=e^{-f_g/2}$ and $w_g$ satisfies the Euler-Lagrange equation
\index{Euler-Lagrange equation}
\begin{align}\label{eulerlagrangew}2\Delta w_g+\frac{1}{2}\scal_g w_g-2w_g\log w_g=\mu_+(g)w_g.
\end{align}
We will now show that there exists a uniform bound $\left\|w_g\right\|_{C^{2,\alpha}}\leq C$ for all metrics $g$
in a $C^{2,\alpha}$-neighbourhood $\mathcal{U}$ of $g_E$.
For this purpose, we first remark that all the Sobolev constants that will appear below are uniformly bounded on $\mathcal{U}$.
 Now observe that by (\ref{widetildeW}),
$$2\left\|\nabla w_g\right\|_{L^2}\leq \mu_+(g)-C_1 \volume(M,g)-\frac{1}{2}\inf_{p\in M}\scal_g(p),$$
since the function $x\mapsto x\log x$ has a lower bound. 
By testing with suitable functions, one sees that $\mu_+(g)$ is bounded from above on $\mathcal{U}$. Therefore,
the $H^1$-norm of $\omega_g$ is bounded and by Sobolev embedding,
the same holds for the $L^{2n/(n-2)}$-norm. Let $p=2n/(n-2)$ and choose some $q$ slightly smaller than $p$.
By \eqref{eulerlagrangew} and elliptic regularity\index{elliptic regularity},
$$\left\|w_g\right\|_{W^{2,q}}\leq C_2 (\left\|w_g\log w_g\right\|_{L^q}+\left\|w_g\right\|_{L^q}).$$
Since $x\mapsto x\log x$ grows slower than $x\mapsto x^{\beta}$ for any $\beta>1$ as $x\to\infty$, we have the estimate
$$\left\|w_g\log w_g\right\|_{L^q}\leq C_3(\volume(M,g))+\left\|w_g\right\|_{L^p}.$$
This yields an uniform bound $\left\|w_g\right\|_{W^{2,q}}\leq C(q)$.

By Sobolev embedding\index{Sobolev!embedding}, we have uniform bounds on $\left\|w_g\right\|_{L^{p'}}$ for some $p'>p$ and by applying elliptic regularity on \eqref{eulerlagrangew},
we have bounds on $\left\|w_g\right\|_{W^{2,q'}}$ for every $q'<p'$.
Iterating this procedure, we obtain uniform bounds $\left\| w_g\right\|_{W^{2,p}}\leq C(p)$ for each $p\in (1,\infty)$. 
By choosing $p$ large enough, we can bound the $C^{0,\alpha}$-norm of $\omega_g$ and by elliptic regularity,
\index{elliptic regularity},
\begin{align*}\left\|w_g\right\|_{C^{2,\alpha}}&\leq C_4 (\left\|w_g\log w_g\right\|_{C^{0,\alpha}}+\left\|w_g\right\|_{C^{0,\alpha}})
                                              \leq C_5[ (\left\|w_g\right\|_{C^{0,\alpha}})^{\gamma}+\left\|w_g\right\|_{C^{0,\alpha}})\leq C_6
\end{align*}
Next, we show that the $C^{2,\alpha}$-norms of $f_g$ are uniformly bounded. First, we claim that we may choose a smaller neighbourhood $\mathcal{V}\subset\mathcal{U}$ such that 
for $g\in \mathcal{V}$, the functions $w_g$ are bounded away from zero (recall that any $w_g=e^{-f_g/2}$ is positive). Suppose this is not the case. Then there exists a sequence $g_i\to g_E$ in $C^{2,\alpha}$ such that $\min_p w_{g_i}(p)\to 0$ for $i\to\infty$.
Since $\left\|w_{g_i}\right\|_{C^{2,\alpha}}\leq C$ for all $i$, there exists a subsequence, again denoted by $w_{g_i}$ such that $w_{g_i}\to w_{\infty}$ in $C^{2,\alpha'}$ for some $\alpha'<\alpha$.
Obviously, the right hand side of (\ref{widetildeW}) converges.
Since $\mu_+$ is bounded from below by (\ref{lowermu}) and from above, a suitable choice of the subsequence ensures that also the left hand side of $(\ref{widetildeW})$ converges.
Therefore, $w_{\infty}$ equals the minimizer\index{minimizer} of $\widetilde{\mathcal{W}}(g_E,w)$, so $w_\infty=w_{g_E}=\volume(M,g_E)^{-1/2}$. In particular, $\min_p w_{g_i}(p)\to \volume(M,g_E)^{-1/2}\neq 0$ which
contradicts the assumption. 
Now we have
\begin{align*}\left\|f_g\right\|_{C^{2,\alpha}}=\left\|-2\log(w_g)\right\|_{C^{2,\alpha}}&\leq C(\log \inf w_g,1/(\inf w_g))\left\|w_g\right\|_{C^{2,\alpha}}\leq C_7.
\end{align*}
It remains to prove that for each $\epsilon>0$, we may choose $\mathcal{U}$ so small that $\left\|\nabla f_g\right\|_{C^0}< \epsilon$. We again use a subsequence
\index{subsequence} argument. Suppose this is not possible.
Then there exists a sequence\index{sequence} of metrics $g_i\to g$ in $C^{2,\alpha}$ and some $\epsilon_0>0$ such that for the corresponding $f_{g_i}$, the estimate $\left\|\nabla f_{g_i}\right\|_{C^0}\geq \epsilon_0$ holds for all $i$.
Because of the bound $\left\|f_g\right\|_{C^{2,\alpha}}\leq C$, we may choose a subsequence, again denoted by $f_i$ converging to some $f_{\infty}$ in $C^{2,\alpha'}$ for $\alpha'<\alpha$.
By the same arguments as above, $f_{\infty}=f_{g_E}\equiv-\log(\volume(M))$. In particular, $\left\|\nabla f_{g_i}\right\|_{C^0}\to 0$, a contradiction.
\end{proof}
\begin{lem}\label{f'}Let $(M,g_E)$ be an Einstein manifold such that $\ric_{g_E}=-g_E$. Then there exists a $C^{2,\alpha}$-neighbourhood
$\mathcal{U}$ of $g_E$ in the space of metrics and a constant $C>0$ such that for all $g\in \mathcal{U}$, we have
\begin{align*}
\left\|\frac{d}{dt}\bigg\vert_{t=0}f_{g+th}\right\|_{C^{2,\alpha}}\leq C  \left\|h\right\|_{C^{2,\alpha}},\qquad\left\|\frac{d}{dt}\bigg\vert_{t=0}f_{g+th}\right\|_{H^i}\leq C  \left\|h\right\|_{H^i},\quad i=1,2.
\end{align*}
\end{lem}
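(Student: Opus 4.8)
The plan is to differentiate the Euler-Lagrange equation~\eqref{mueulerlagrange} for the minimizer $f_g$ in the direction of a metric variation $h$, and to show that the resulting linear elliptic equation for $f':=\frac{d}{dt}\big|_{t=0}f_{g+th}$ can be solved with a uniformly bounded inverse for every $g$ in a sufficiently small $C^{2,\alpha}$-neighbourhood $\mathcal{U}$ of $g_E$; the two families of estimates then drop out of elliptic regularity together with the uniform bounds of Lemma~\ref{goodf_g}.

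First I would carry out the differentiation. Writing $g_t=g+th$ and differentiating $-\Delta_{g_t}f_{g_t}-\frac{1}{2}|\nabla f_{g_t}|^2_{g_t}+\frac{1}{2}\scal_{g_t}-f_{g_t}=\mu_+(g_t)$ at $t=0$ yields an equation of the form
\[
-\Delta_g f'-\langle\nabla f_g,\nabla f'\rangle_g-f'=\Phi_g(h),
\]
where all terms containing $f'$ have been collected into the left-hand operator $L_g:=-\Delta_g-\langle\nabla f_g,\nabla\,\cdot\,\rangle_g-1$, and $\Phi_g(h)$ involves only $g$, $h$ and $f_g$. Concretely, $\Phi_g(h)$ is assembled from the number $\mu_+'(g)(h)$, given in closed form by Lemma~\ref{mufirstvariation}; the second-order term $\frac{1}{2}\scal_g'(h)=\frac{1}{2}\bigl(\Delta_g\trace_g h+\delta_g(\delta_g h)-\langle\ric_g,h\rangle_g\bigr)$ coming from the variational formula for the scalar curvature; and lower-order contributions from the variations of $\Delta_g$ and of $|\cdot|^2_g$ applied to $f_g$ and $\nabla f_g$, which are of order at most one in $h$ and at most two in $f_g$. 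Using Lemma~\ref{goodf_g} for the uniform $C^{2,\alpha}$-bound on $f_g$ and the smallness of $\nabla f_g$ on $\mathcal{U}$, together with the boundedness of $\mu_+$ and $\mu_+'$ there, one checks that $\|\Phi_g(h)\|_{C^{0,\alpha}}\leq C\|h\|_{C^{2,\alpha}}$ and $\|\Phi_g(h)\|_{H^{i-2}}\leq C\|h\|_{H^i}$ for $i=1,2$, uniformly in $g\in\mathcal{U}$; in the case $i=1$ the terms $\Delta_g\trace_g h$ and $\delta_g(\delta_g h)$ are read as elements of $H^{-1}$.

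Next I would establish that $L_g$ is invertible with uniformly bounded inverse. At $g=g_E$ the minimizer is constant by Lemma~\ref{f_{g_E}}(i), so $\nabla f_{g_E}=0$ and $L_{g_E}=-\Delta-1$; since $\Delta$ has nonnegative spectrum, $\Delta+1\geq 1$, so $L_{g_E}$ is an isomorphism $C^{2,\alpha}\to C^{0,\alpha}$ and $H^i\to H^{i-2}$ (for $i=1,2$) with bounded inverse. By Lemma~\ref{goodf_g} the coefficients of $L_g$ depend continuously on $g\in\mathcal{U}$ and $\nabla f_g$ can be made small, hence $L_g\to L_{g_E}$ in the relevant operator norms as the neighbourhood shrinks, and after shrinking $\mathcal{U}$ all the $L_g$ are invertible with $\sup_{g\in\mathcal{U}}\|L_g^{-1}\|\leq C$. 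Since the differentiated Euler-Lagrange equation is precisely $L_g f'=\Phi_g(h)$, we conclude $\|f'\|_{C^{2,\alpha}}\leq C\|\Phi_g(h)\|_{C^{0,\alpha}}\leq C\|h\|_{C^{2,\alpha}}$ and $\|f'\|_{H^i}\leq C\|\Phi_g(h)\|_{H^{i-2}}\leq C\|h\|_{H^i}$, which is the assertion. No separate check of the differentiated constraint $\int_M(-f'+\frac{1}{2}\trace_g h)e^{-f_g}\dv_g=0$ is needed: invertibility of $L_g$ already determines $f'$ uniquely from $L_g f'=\Phi_g(h)$, and this $f'$ is the genuine derivative of $f_g$, which obeys the constraint by construction.

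The hard part is not a single deep estimate but the bookkeeping in the first step: one must verify that every term produced by differentiating the nonlinear equation is controlled linearly by the appropriate norm of $h$ with constants uniform over $\mathcal{U}$, and it is exactly here that the uniform bounds of Lemma~\ref{goodf_g} are used. A minor additional care is required in the $H^1$-case, where $\Phi_g(h)$ only lies in $H^{-1}$ and the identity $L_g f'=\Phi_g(h)$ has to be understood weakly; the corresponding mapping properties of $L_g$ hold because its leading coefficients are as regular as the metric.
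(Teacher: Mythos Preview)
Your proposal is correct and follows essentially the same route as the paper: differentiate the Euler--Lagrange equation \eqref{mueulerlagrange}, isolate the elliptic operator acting on $f'$, and use invertibility together with the uniform bounds from Lemma~\ref{goodf_g} to control $f'$ by $h$. The only cosmetic difference is that the paper applies elliptic regularity for the fixed operator $\Delta+1$ and absorbs the first-order term $\langle\nabla f_g,\nabla f'\rangle$ afterwards using the smallness of $\|\nabla f_g\|_{C^0}$, whereas you package this as a perturbation argument for the full operator $L_g=-\Delta_g-\langle\nabla f_g,\nabla\cdot\rangle-1$; these are the same argument.
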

\begin{proof}Recall that $f_g$ satisfies the Euler-Lagrange equation\index{Euler-Lagrange equation}
$$-\Delta f-\frac{1}{2}|\nabla f|^2+\frac{1}{2}\scal -f=\mu_+(g).$$
Differentiating this equation in the direction of $h$ yields
$$-\dot{\Delta} f-\Delta\dot{f}+\frac{1}{2}h(\gradient f,\gradient f)-\langle \nabla f,\nabla \dot{f}\rangle+\frac{1}{2}\dot{\scal} -\dot{f}=\dot{\mu}_+(g).$$
By Lemma \ref{firsthessian} and Lemma \ref{firstcuvature} the variational formulas for the Laplacian and the scalar curvature
\index{first variation!of the Laplacian}\index{first variation!of the scalar curvature} are
\begin{align*}\dot{\Delta}f&=\langle h,\nabla^2 f\rangle-\langle \delta h+\frac{1}{2}\nabla \trace h,\nabla f\rangle,\\
              \dot{\scal}&=\Delta (\trace h)+\delta(\delta h)-\langle\ric,h\rangle.
\end{align*}
Because $\Delta+1$ is invertible, we can apply elliptic regularity\index{elliptic regularity} and we obtain
\begin{align*}\left\|\dot{f}\right\|_{C^{2,\alpha}}&\leq C_1\left\|(\Delta+1)\dot{f}\right\|_{C^{0,\alpha}}\\
&\leq C_1\left\|\nabla f\right\|_{C^0}\left\|\nabla\dot{f}\right\|_{C^{0,\alpha}}+C_1\left\|-\dot{\Delta} f+\frac{1}{2}h(\nabla f,\nabla f)+\frac{1}{2}\dot{\scal}-\dot{\mu}_+(g)\right\|_{C^{0,\alpha}}.
\end{align*}
By Lemma \ref{goodf_g}, we may choose $\mathcal{U}$ so small that $\left\|\nabla f\right\|_{C^{0}}<\epsilon$ for some small $\epsilon<\min\left\{C_1^{-1},1\right\}$. Then we have
\begin{align*}(1-\epsilon)\left\|\dot{f}\right\|_{C^{2,\alpha}}
&\leq C_1\left\|-\dot{\Delta} f+\frac{1}{2}h(\nabla f,\nabla f)+\frac{1}{2}\dot{\scal}-\dot{\mu}_+(g)\right\|_{C^{0,\alpha}}
\leq (C_2\left\| f_g\right\|_{C^{2,\alpha}}+C_3)\left\|h\right\|_{C^{2,\alpha}}.
\end{align*}
The last inequality follows from the variational formulas of the Laplacian, the scalar curvature and $\mu_+$. By the uniform bound on $\left\| f_g\right\|_{C^{2,\alpha}}$, the first estimate of the lemma follows.
The estimate of the $H^{i}$-norm is shown similarly.
\end{proof}
\begin{prop}[Estimate of the second variation of $\mu_+$]\label{secondmu}\index{second variation!of $\mu_+$}Let $(M,g_E)$ be an Einstein manifold with constant $-1$. Then there exists a $C^{2,\alpha}$-neighbourhood $\mathcal{U}$ of $g_E$
and a constant $C>0$ such that
$$\left|\frac{d^2}{dsdt}\bigg\vert_{s,t=0}\mu_+(g+th+sk)\right|\leq C \left\| h\right\|_{H^1}\left\|k\right\|_{H^1}$$
for all $g\in\mathcal{U}$.
\end{prop}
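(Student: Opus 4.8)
The strategy is to differentiate the first‑variation formula of Lemma~\ref{mufirstvariation} a second time and to estimate the resulting bilinear form term by term. Since the minimizer $f_g$ depends smoothly on $g$, the quantity in question is well defined and equals $\frac{d}{ds}|_{s=0}\bigl(\mu_+(g+sk)'(h)\bigr)$. Write $\dot f:=\frac{d}{ds}|_{s=0}f_{g+sk}$. Starting from $\mu_+(g)'(h)=-\tfrac12\int_M\langle\ric_g+g+\nabla^2 f_g,h\rangle e^{-f_g}\dv_g$ and applying the Leibniz rule, one obtains a finite sum of terms coming from the first variations of $\ric$ (a Lichnerowicz‑type expression linear in $k$), of the summand $g$, of $\nabla^2 f_g$ (which splits into $\nabla^2_g\dot f$ plus a connection‑variation term of the shape $\dot\Gamma(k)\cdot\nabla f_g$), of the weight $e^{-f_g}$ (producing a factor $-\dot f$), of the volume element (producing $\tfrac12\trace_g k$), and of the pointwise metric contraction.

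The decisive feature is that the fixed tensor $h$ is never differentiated, so every second derivative falls on $k$, on $\dot f$, or on background quantities built from $g$, and always \emph{linearly}: the only second‑order factors occurring are $\nabla^*\nabla k$, $\delta^*\delta k$ and $\nabla^2\trace_g k$ (inside the variation of $\ric$) and $\nabla^2_g\dot f$, never a product of two such. For each of these I would integrate by parts once, moving one derivative onto $h\,e^{-f_g}$; this yields $\nabla h$ contracted against $\nabla k$ (respectively $\nabla\dot f$) together with strictly lower‑order remainders in which the transferred derivative lands on $e^{-f_g}$. The remaining terms are already of order $\le 1$ and are left untouched. To close the estimate I would then invoke the uniform bounds available on a small enough common neighbourhood $\mathcal{U}$: the $C^{2,\alpha}$‑bound on $g\in\mathcal{U}$, hence a $C^{0,\alpha}$‑bound on $\ric_g$ and thus on $\ric_g+g+\nabla^2 f_g$; Lemma~\ref{goodf_g}, giving uniform bounds on $\|f_g\|_{C^{2,\alpha}}$, on $\|\nabla f_g\|_{C^0}$ and therefore on $e^{\pm f_g}$ and $\nabla e^{-f_g}$; and Lemma~\ref{f'}, giving $\|\dot f\|_{H^1}\le C\|k\|_{H^1}$. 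Cauchy--Schwarz applied term by term then bounds each contribution by $C\|h\|_{H^1}\|k\|_{H^1}$ with $C$ independent of $g\in\mathcal{U}$, which is the assertion.

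The step requiring the most care---more bookkeeping than a genuine obstacle---is verifying that a single integration by parts always suffices. The only borderline term is $\int_M\langle\nabla^2_g\dot f,h\rangle e^{-f_g}\dv_g$, which a priori would require $\dot f\in H^2$ (hence a $\|k\|_{H^2}$ on the right), but after one integration by parts it becomes $-\int_M\langle\nabla\dot f,\delta(h\,e^{-f_g})\rangle\dv_g$ up to sign, for which the $H^1$‑bound on $\dot f$ from Lemma~\ref{f'} is exactly what is needed. One should also note that it is the $C^{2,\alpha}$‑regularity of $g$, not mere continuity, that keeps $\ric_g$ in $L^\infty$; this is why the neighbourhood $\mathcal{U}$ is taken in the $C^{2,\alpha}$‑topology.
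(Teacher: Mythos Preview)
Your proposal is correct and follows essentially the same approach as the paper: differentiate the first-variation formula, split into the contributions from the gradient $\ric+g+\nabla^2 f_g$, the metric contraction, and the measure $e^{-f_g}\dv_g$, then integrate by parts once on the second-order pieces and close with Lemmas~\ref{goodf_g} and~\ref{f'}. The paper's proof is terser but does exactly this, grouping the terms as $(1)$ (contraction), $(2)$ (gradient), $(3)$ (measure); your explicit remark that $\int_M\langle\nabla^2\dot f,h\rangle e^{-f_g}\dv_g$ is the only borderline term and needs a single integration by parts to be controlled by $\|\dot f\|_{H^1}$ is precisely the point hidden in the paper's phrase ``integration by parts, Lemma~\ref{firsthessian}, Lemma~\ref{f'}''.
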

\begin{proof}By the formula of the first variation\index{first variation!of $\mu_+$},
\begin{align*}\frac{d^2}{dsdt}\bigg\vert_{s,t=0}\mu_+(g+th+sk)&=-\frac{d}{ds}\bigg\vert_{s=0}\frac{1}{2}\int_M \langle \ric_{g_s}+g_s-\nabla^2 f_{g_s},h\rangle_{g_s} e^{-f_{g_s}}\dv_{g_s}
                                                     =(1)+(2)+(3),
\end{align*}
and we estimate these three terms separately. The first term comes from differentiating the scalar product:
\begin{align*}|(1)|&=\left|\int_M \langle \ric_{g}+g-\nabla^2 f_{g},k\circ h\rangle_{g} e^{-f_{g}}\dv_{g}\right|
                   \leq C_1 \left\| h\right\|_{H^1}\left\|k\right\|_{H^1}.
\end{align*}
This estimates holds since the $f_g$ are uniformly bounded in a small $C^{2,\alpha}$-neighbourhood of $g_E$.
The second term comes from differentiating the gradient\index{gradient}:
\begin{align*}|(2)|&= \left|\frac{1}{2}\int_M \left\langle \frac{d}{ds}\bigg\vert_{s=0}(\ric_{g_s}+g_s-\nabla^2 f_{g_s}),h\right\rangle_{g} e^{-f_{g}}\dv_{g}\right|\\
                   &= \left|\frac{1}{2}\int_M \left\langle \frac{1}{2}\Delta_Lk-\delta^{*}(\delta k)-\frac{1}{2}\nabla^2\trace k+k-(\nabla^2)' f_g-\nabla^2 f'_g,e^{-f_{g}}h\right\rangle_{g} \dv_{g}\right|\\
                   &\leq C_2 \left\|k\right\|_{H^1}\left\|h\right\|_{H^1}.
\end{align*}
The inequality follows from integration by parts\index{integration by parts}, Lemma \ref{firsthessian}, Lemma \ref{f'} and from the uniform bound on the $f_g$.
The third term appears when we differentiate the measure:
\begin{align*}|(3)|&=\left|\frac{1}{2}\int_M \langle \ric_{g}+g-\nabla^2 f_{g},h\rangle_{g} \left(-f'_g+\frac{1}{2}\trace k\right)e^{-f_g}\dv_g\right|
                   \leq C_3  \left\| h\right\|_{H^1}\left\|k\right\|_{H^1}.
\end{align*}
Here we again used Lemma \ref{f'} in the last step.
\end{proof}
\begin{lem}\label{f''}Let $(M,g_E)$ be an Einstein manifold with constant $-1$. Then there exists a $C^{2,\alpha}$- neighbourhood $\mathcal{U}$ of $g_E$ and a constant $C>0$ such that
\begin{align*}\left\|\frac{d^2}{dtds}\bigg\vert_{t,s=0}f_{g+sk+th}\right\|_{H^i}\leq C \left\|h\right\|_{C^{2,\alpha}}\left\|k\right\|_{H^i},\qquad i=1,2.
\end{align*}
\end{lem}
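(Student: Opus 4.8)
The plan is to differentiate the Euler--Lagrange equation \eqref{mueulerlagrange} one more time than in the proof of Lemma \ref{f'} and to read the estimate off from elliptic regularity for the invertible operator $\Delta+1$. Write $f_{hk}:=\frac{d^2}{dtds}|_{t,s=0}f_{g+sk+th}$ for the quantity to be bounded and $f_h:=\frac{d}{dt}|_{t=0}f_{g+th}$, $f_k:=\frac{d}{ds}|_{s=0}f_{g+sk}$ for the first variations; denote by $\Delta_h,\scal_h$ (resp.\ $\Delta_{hk},\scal_{hk}$) the first (resp.\ second) variations of the Laplacian and the scalar curvature, obtained from Lemma \ref{firsthessian} and Lemma \ref{firstcuvature} by differentiating once more, and by $(\mu_+)_{hk}$ the second variation of $\mu_+$. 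Differentiating \eqref{mueulerlagrange} first in the direction $h$ — exactly the computation in Lemma \ref{f'} — and then in the direction $k$, and collecting on the left the terms containing $f_{hk}$, I would bring the result into the form
\begin{align*}
-(\Delta+1)f_{hk}-\langle\nabla f_g,\nabla f_{hk}\rangle=A,
\end{align*}
where $A$ is a finite sum of terms, each a $g$-dependent coefficient times factors built from $f_g,f_h,f_k$ (with up to two covariant derivatives), from $h,k$ (with up to two covariant derivatives) and from the scalar $(\mu_+)_{hk}$. The only summands carrying two derivatives of $k$ or of $f_k$ are $\langle h,\nabla^2 f_k\rangle$ (from $\Delta_h f_k$) and the top-order part of $\scal_{hk}$, which is schematically $h\ast\nabla^2 k+\nabla h\ast\nabla k+\nabla^2 h\ast k$; in all of these the two-derivative factor is paired only with quantities built from $g,f_g,f_h,h$, hence bounded in $C^{1,\alpha}\subset W^{1,\infty}$ on a small neighbourhood.

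I would then fix a $C^{2,\alpha}$-neighbourhood $\mathcal{U}$ of $g_E$ inside those given by Lemma \ref{goodf_g}, Lemma \ref{f'} and Proposition \ref{secondmu}, small enough that $\|\nabla f_g\|_{C^0}<\epsilon$ on $\mathcal{U}$. There one has $\|f_g\|_{C^{2,\alpha}}\leq C$, $\|f_h\|_{C^{2,\alpha}}\leq C\|h\|_{C^{2,\alpha}}$, $\|f_k\|_{H^i}\leq C\|k\|_{H^i}$ and $|(\mu_+)_{hk}|\leq C\|h\|_{H^1}\|k\|_{H^1}$. The estimate of $A$ rests on the bookkeeping that $h$ and $f_h$ enter every summand only through $W^{1,\infty}$-bounded factors — so they always pull out a factor $\|h\|_{C^{2,\alpha}}$ — while $k$ and $f_k$ enter with at most two derivatives against $W^{1,\infty}$-bounded coefficients; shifting, by a single integration by parts, the top derivative in $\langle h,\nabla^2 f_k\rangle$ and in $h\ast\nabla^2 k$ onto those coefficients puts $A$ into $L^2$ when $i=2$ and into $H^{-1}$ when $i=1$, with $\|A\|_{H^{i-2}}\leq C\|h\|_{C^{2,\alpha}}\|k\|_{H^i}$. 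Since moreover $\|\langle\nabla f_g,\nabla f_{hk}\rangle\|_{H^{i-2}}\leq\|\langle\nabla f_g,\nabla f_{hk}\rangle\|_{L^2}\leq\epsilon\|f_{hk}\|_{H^i}$, this term is absorbed into the left-hand side once $\epsilon$ is small; and as $\Delta+1\colon H^i\to H^{i-2}$ is an isomorphism for $i=1,2$, elliptic regularity gives $\|f_{hk}\|_{H^i}\leq C\|A\|_{H^{i-2}}\leq C\|h\|_{C^{2,\alpha}}\|k\|_{H^i}$.

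The differentiation and the elliptic estimate are routine; the point that needs care is verifying that the two-derivative dependence of $A$ on $k$ and on $f_k$ always comes paired with a $W^{1,\infty}$-coefficient, so that a single integration by parts suffices to place $A$ in $H^{i-2}$. This is exactly what forces the right-hand side of the estimate to involve the strong norm $\|h\|_{C^{2,\alpha}}$ but only the weak norm $\|k\|_{H^i}$, and it is why the statement cannot be symmetrized.
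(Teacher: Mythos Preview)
Your proposal is correct and follows essentially the same route as the paper: differentiate the Euler--Lagrange equation twice, isolate $(\Delta+1)f_{hk}$ together with the first-order term $\langle\nabla f_g,\nabla f_{hk}\rangle$, absorb the latter using the smallness of $\|\nabla f_g\|_{C^0}$ from Lemma~\ref{goodf_g}, and estimate the remainder $A$ in $H^{i-2}$ via Lemma~\ref{f'}, Proposition~\ref{secondmu}, and the schematic second-variation formulas of Lemma~\ref{secondhessian}. Your closing remark on the asymmetry between $\|h\|_{C^{2,\alpha}}$ and $\|k\|_{H^i}$ is a nice clarification of why the estimate takes this form.
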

 \begin{proof}In the proof, we denote $t$-derivatives by dot and $s$-derivatives by prime. Differentiating (\ref{mueulerlagrange}) twice yields
\begin{align*}-\Delta\dot{f}'-\dot{\Delta}f'-\Delta'\dot{f}-\dot{\Delta}'f+h(\gradient f,\gradient f')+k(\gradient f,\gradient\dot{f})&\\
                                            -\langle \nabla f,\nabla \dot{f}'\rangle-\langle\nabla\dot{f},\nabla f'\rangle+\frac{1}{2}\dot{\scal}'-\dot{f}'&=\dot{\mu}_+'.
\end{align*}
By elliptic regularity\index{elliptic regularity}, we have
\begin{align}\label{elliptic}\left\|\dot{f}'\right\|_{H^i}\leq C_1 \left\|(\Delta+1)\dot{f}'\right\|_{H^{i-2}}\leq C_1 \left\|\nabla f\right\|_{C^0}\left\|\nabla \dot{f}'\right\|_{L^2}+C_1\left\|(A)\right\|_{H^{i-2}},
\end{align}
where
\begin{align*}(A)=&-\dot{\Delta}f'-\Delta'\dot{f}-\dot{\Delta}'f+h(\gradient f,\gradient f')+k(\gradient f,\gradient\dot{f})
                                                -\langle \nabla \dot{f},\nabla f'\rangle+\frac{1}{2}\dot{\scal}'-\dot{\mu}_+'.
\end{align*}
By the first variation of the Laplacian and the scalar curvature and the estimates we already developed for $\dot{f}$ and $f'$ in Lemma \ref{f'}, we have
\begin{align*}
 \left\|-\dot{\Delta}f'-\Delta'\dot{f}+h(\gradient f,\gradient f')+k(\gradient f,\gradient\dot{f})-\langle \nabla \dot{f},\nabla f'\rangle
 \right\|_{L^2}\leq C_3 \left\|h\right\|_{C^{2,\alpha}}\left\|k\right\|_{H^1}.
\end{align*}
Now we consider the occurent second variational formulas of the Laplacian and the scalar curvature. 
\index{second variation!of the Laplacian}\index{second variation!of the scalar curvature}
By Lemma \ref{secondhessian}, they can be schematically written as
\begin{align*}\dot{\Delta}'f&=\nabla k*h*\nabla f+k*\nabla h*\nabla f,\\
\dot{\scal}'&=\nabla^2 k*h+k*\nabla^2 h+\nabla k*\nabla h+ R*k*h.
\end{align*}
Here, $*$\index{$*$, Hamilton's notation} is Hamilton's notation for a combination of tensor products\index{tensor product} with contractions\index{contraction}. Now, Lemma \ref{secondmu}, 
integration by parts\index{integration by parts} and the H\"older inequality\index{H\"older inequality} yield
\begin{align*}\left\|-\dot{\Delta}'f+\frac{1}{2}\dot{\scal}'-\dot{\mu}_+'\right\|_{H^{i-2}}\leq C_2 \left\|h\right\|_{C^{2,\alpha}}\left\|k\right\|_{H^i}.
\end{align*}
We obtain
\begin{align*}\left\|(A)\right\|_{H^{i-2}}\leq C_4\left\|h\right\|_{C^{2,\alpha}}\left\|k\right\|_{H^i}.
\end{align*}
Since $\left\|\nabla f\right\|_{C^0}$ can be assumed to be arbitrarily small, we bring this term to the left hand side of (\ref{elliptic}) and obtain the result.                       
\end{proof}
\begin{prop}[Estimates of the third variation of $\mu_+$]\label{thirdmu}\index{third variation!of $\mu_+$}Let $(M,g_E)$ be an Einstein manifold with constant $-1$. Then there exists a $C^{2,\alpha}$-neighbourhood $\mathcal{U}$ of $g_E$ 
and a constant $C>0$ such that
$$\left|\frac{d^3}{dt^3}\bigg\vert_{t=0}\mu_+(g+th)\right|\leq C \left\| h\right\|^2_{H^1}\left\|h\right\|_{C^{2,\alpha}}$$
for all $g\in\mathcal{U}$.
\end{prop}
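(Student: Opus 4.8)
The plan is to differentiate the first variation formula of Lemma~\ref{mufirstvariation} twice more in the direction $h$ and to estimate the result term by term, exactly as was done for the second variation in Proposition~\ref{secondmu}. Write $g_t=g+th$, set $T_{g_t}=\ric_{g_t}+g_t+\nabla^2_{g_t}f_{g_t}$, and abbreviate $t$-derivatives by dots, so $\dot{f}_g=\frac{d}{dt}|_{t=0}f_{g+th}$ and $\ddot{f}_g=\frac{d^2}{dt^2}|_{t=0}f_{g+th}$. By Lemma~\ref{mufirstvariation},
\begin{align*}
\frac{d^3}{dt^3}\bigg\vert_{t=0}\mu_+(g+th)=-\frac12\,\frac{d^2}{dt^2}\bigg\vert_{t=0}\int_M\langle T_{g_t},h\rangle_{g_t}\,e^{-f_{g_t}}\dv_{g_t}.
\end{align*}
Expanding the two derivatives with the product rule yields a finite sum of integrals $\int_M P\,e^{-f_g}\dv_g$, in each of which the two derivatives are distributed among $T_{g_t}$, the metric pairing, the weight, and the volume element. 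Using the first and second variational formulas for the Ricci tensor, the Laplacian and the scalar curvature (Lemma~\ref{firstcuvature}, Lemma~\ref{firsthessian}, Lemma~\ref{secondhessian}), each $P$ becomes, schematically in Hamilton's $*$-notation, a contraction of copies of $h$ carrying at most two derivatives in total together with copies of $R_g$ and of the functions $f_g,\dot{f}_g,\ddot{f}_g$; the genuinely new and highest-order pieces are the terms $\nabla^2h*h$, $\nabla h*\nabla h$ and $R_g*h*h$ arising from the second variation of $\ric_{g_t}$, and the term $\nabla^2_g\ddot{f}_g$ inside $\ddot{T}_{g_t}$.

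First I would take $\mathcal{U}$ to be the intersection of the neighbourhoods of $g_E$ furnished by Lemma~\ref{goodf_g}, Lemma~\ref{f'} and Lemma~\ref{f''}, so that on $\mathcal{U}$ one has $\|f_g\|_{C^{2,\alpha}}\le C$, $\|\nabla f_g\|_{C^0}\le\epsilon$, $\|\dot{f}_g\|_{C^{2,\alpha}}\le C\|h\|_{C^{2,\alpha}}$, $\|\dot{f}_g\|_{H^1}\le C\|h\|_{H^1}$, $\|\ddot{f}_g\|_{H^1}\le C\|h\|_{C^{2,\alpha}}\|h\|_{H^1}$, and also (since $\mathcal{U}$ is a $C^{2,\alpha}$-neighbourhood) $\|R_g\|_{C^0}\le C$ and $\|T_g\|_{C^0}\le C$. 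Then, in every summand that carries $\nabla^2h$, I would integrate by parts once to move one derivative off $h$; likewise I would integrate by parts once in $\int_M\langle\nabla^2_g\ddot{f}_g,h\rangle e^{-f_g}\dv_g$ to bring it to the schematic form $\int_M\nabla\ddot{f}_g*\nabla h*e^{-f_g}\dv_g+\int_M\nabla\ddot{f}_g*h*\nabla f_g*e^{-f_g}\dv_g$. After this, no single copy of $h$ carries more than one derivative and $\ddot{f}_g$ appears with at most one derivative. Each integral is then estimated by Hölder's inequality, placing at most two of the factors in $L^2$ and all others in $C^0$ or $C^{2,\alpha}$; here I would exploit the flexibility that $\dot{f}_g$ is simultaneously bounded in $C^0$ by $\|h\|_{C^{2,\alpha}}$ and in $L^2$ by $\|h\|_{H^1}$, that each occurrence of $\ddot{f}_g$ supplies a factor $\|\ddot{f}_g\|_{H^1}\le C\|h\|_{C^{2,\alpha}}\|h\|_{H^1}$, and that $\|h\|_{L^2},\|\nabla h\|_{L^2}\le\|h\|_{H^1}$ while $\|h\|_{C^0}\le\|h\|_{C^{2,\alpha}}$. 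This bounds each summand by $C\|h\|_{H^1}^2\|h\|_{C^{2,\alpha}}$ (those summands containing a factor $\nabla f_g$ even by $C\epsilon\|h\|_{C^{2,\alpha}}^2\|h\|_{H^1}$), and summing the finitely many contributions gives the claim.

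The main obstacle is the bookkeeping in the highest-order terms: one must check that after a single integration by parts the second-variation-of-Ricci contribution $\int_M\langle\nabla^2h*h,h\rangle e^{-f_g}\dv_g$ really reduces to integrals of the form $\nabla h*\nabla h*h$ (plus $\nabla h*h*h*\nabla f_g$), so that it is controlled by $\|\nabla h\|_{L^2}^2\|h\|_{C^0}\le\|h\|_{H^1}^2\|h\|_{C^{2,\alpha}}$ rather than by an expression involving $\|h\|_{H^2}$, and that the $\ddot{f}_g$-term after integration by parts is controlled by $\|\ddot{f}_g\|_{H^1}\|h\|_{H^1}$ together with Lemma~\ref{f''}; the curvature term $R_g*h*h*h$ carries no derivatives and is immediately harmless since $\int_M|R_g|\,|h|^3\dv_g\le\|R_g\|_{C^0}\|h\|_{L^2}^2\|h\|_{C^0}$. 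Apart from these two or three genuinely new terms, the remaining summands are estimated exactly as in the proofs of Proposition~\ref{secondmu} and Lemma~\ref{f''}, and I would treat them briefly.
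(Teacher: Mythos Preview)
Your proposal is correct and follows essentially the same route as the paper: differentiate the first variation formula twice, expand via the product rule, use the schematic second-variation formulas for $\ric$ and $\nabla^2 f$ (Lemma~\ref{secondhessian}) together with the bounds on $f_g,\dot{f}_g,\ddot{f}_g$ from Lemmas~\ref{goodf_g}, \ref{f'} and \ref{f''}, integrate by parts once on the $\nabla^2 h*h$ and $\nabla^2\ddot{f}_g$ pieces, and finish with H\"older. The paper's proof is terser---it simply records the schematic expansions and invokes the lemmas---but your more explicit bookkeeping of the integration-by-parts step and the placement of norms is exactly what is implicitly used there.
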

\begin{proof}We have, by the first variational formula\index{first variation!of $\mu_+$},
\begin{align*}&\frac{d^3}{dt^3}\bigg\vert_{t=0}\mu_+(g+th)=-\frac{1}{2}\frac{d^2}{dt^2}\bigg\vert_{t=0}\int_M \langle \ric+g+\nabla^2 f_g,h\rangle e^{-f}\dv\\
                             &=-\frac{1}{2}\int_M \langle (\ric+g+\nabla^2 f_g)'',h\rangle e^{-f}\dv-3\int_M \langle \ric+g+\nabla^2 f_g,h\circ h\circ h\rangle e^{-f}\dv\\
                             &-\frac{1}{2}\int_M \langle \ric+g+\nabla^2 f_g,h\rangle (e^{-f}\dv)''+2\int_M \langle (\ric+g+\nabla^2 f_g)',h\circ h \rangle e^{-f}\dv\\
                              &-\int_M \langle (\ric+g+\nabla^2 f_g)',h\rangle (e^{-f}\dv)'+2\int_M \langle \ric+g+\nabla^2 f_g,h\circ h\rangle (e^{-f}\dv)'.
\end{align*}
Let us deal with the first term which contains
the second derivative of the gradient\index{gradient} of $\mu_+$. We have the schematic expressios
\begin{align*}(\ric+g)''&=\nabla^2h*h+\nabla h*\nabla h+R*h*h,\\
           (\nabla^2 f_g)''&=(\nabla^2)''f_g+2(\nabla^2)'f_g'+\nabla^2 f_g''\\
                           &=\nabla f*\nabla h*h+\nabla f'*\nabla h+\nabla^2 f_g'',
\end{align*}
see Lemma \ref{firsthessian} and Lemma \ref{secondhessian}.
From these expressions we obtain, by applying Lemma \ref{f'}, Lemma \ref{f''} and the H\"older inequality,
\begin{align*}\left|\int_M \langle (\ric+g+\nabla^2 f_g)'',h\rangle e^{-f}\dv\right|\leq C \left\| h\right\|^2_{H^1}\left\|h\right\|_{C^{2,\alpha}}.
\end{align*}
The estimates of the other terms are straightforward from the variational formulas in the appendix, Lemma \ref{f'} and Lemma \ref{f''}.
\end{proof}

\section{Local maximum of $\lambda$ and the expander entropy}\label{localmax}
Here we give characterizations of local maximality of $\lambda$ and $\mu_+$.
We prove Theorem \ref{lambdaandyamabe} using the theory developed for the Yamabe problem.
 For $\mu_+$, we use Koiso's local decomposition theorem of the space of metrics \cite{Koi79b} and the observation
that the $\mu_+$-functional can be explicitly evaluated on metrics of constant scalar curvature\index{constant scalar curvature}.
\begin{proof}[Proof of Theorem \ref{lambdaandyamabe}]
Suppose that $g_{RF}$ is not a local maximum of the Yamabe invariant. Then there exists a metric $g$ close to $g_{RF}$ such that $Y(M,[g])>0$ and by the solution of the Yamabe problem \cite{Sch84}, it admits a positive scalar curvature metric $\tilde{g}$ realizing $Y(M,[g])$ which is also close to $g_{RF}$ by  \cite[Theorem 2.5]{Koi79b}. Then by definition, $\lambda(\tilde{g})>0$, i.e.\ $g_{RF}$ is not a local maximizer of $\lambda$.

 Conversely, suppose that $g_{RF}$ is a local maximum of the Yamabe functional, i.e.\ the Yamabe invariant of any conformal class close to $[g_{RF}]$ is nonpositive. By the solution of the Yamabe problem, the sign of the smallest eigenvalue of the Yamabe operator $\Delta_Y=4\frac{n-1}{n-2}\Delta+\scal$ determines the sign of the Yamabe invariant. Thus, the smallest eigenvalue of $\Delta_Y$ is nonpositive on any metric close to $g_{RF}$. Since $\lambda$ is the smallest eigenvalue of the operator $4\Delta+\scal$ we nessecarily have $\lambda\leq 0$ for these metrics.
\end{proof}
\begin{thm}\label{yamabemaximality}Let $(M,g_E)$ be a compact Einstein manifold with constant $-1$. Then $g_E$ is a maximum of the $\mu_+$-functional in a $C^{2,\alpha}$-neighbourhood if and only if $g$ is a local maximum of the 
Yamabe functional\index{Yamabe!functional} in a $C^{2,\alpha}$-neighbourhood.
In this case, any metric sufficiently close to $g_E$ with $\mu_+(g)=\mu_+(g_E)$ is Einstein with constant $-1$.
\end{thm}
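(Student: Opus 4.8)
The plan is to reduce everything to metrics of constant scalar curvature, where $\mu_+$ has an explicit formula, and then to read off the Yamabe invariant of the conformal class from that formula. Since $\mu_+$ and the Yamabe functional $g\mapsto Y(M,[g])$ are both diffeomorphism invariant, Koiso's local decomposition of the space of metrics \cite{Koi79b} lets me work in the slice $\mathcal{S}=\{g_E+h:\delta_{g_E}h=0\}$; combined with the solution of the Yamabe problem \cite{Sch84} it decomposes a $C^{2,\alpha}$-neighbourhood of $g_E$ in $\mathcal{S}$ as a product of the constant scalar curvature metrics near $g_E$ with the conformal deformations, so that every $g$ near $g_E$ is, modulo diffeomorphism, of the form $u^{4/(n-2)}\bar{g}$ with $u>0$ and $\bar{g}$ of constant scalar curvature close to $g_E$ and unique in its conformal class up to scaling.

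On a constant scalar curvature metric the constant function realises the Jensen lower bound from the proof of Lemma \ref{f_{g_E}}(i), so $\mu_+(g)=\tfrac12\scal_g-\log\volume(M,g)$ there; maximising $\mu_+(tg)=\tfrac12 t^{-1}\scal_g-\tfrac n2\log t-\log\volume(M,g)$ over the scalings $t>0$ within a fixed conformal class shows the supremum is attained exactly when $\scal=-n$, and for that representative $\bar g_0$ one has $Y(M,[\bar g_0])=-n\,\volume(M,\bar g_0)^{2/n}$, since the Yamabe constant stays negative near $[g_E]$ and hence the constant scalar curvature representative is the Yamabe minimiser. Substituting gives
\[ \mu_+(\bar g_0)=-\frac{n}{2}\Bigl(1+\log\frac{-Y(M,[g])}{n}\Bigr)=:G(g), \]
which depends only on $[g]$ and is a strictly increasing function of $Y(M,[g])$.

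The crux, and the step I expect to be hardest, is the inequality $\mu_+(g)\le G(g)$ for all $g$ near $g_E$, with equality precisely when $g$ has constant scalar curvature equal to $-n$; geometrically this says $\mu_+$ is maximised along each fibre of $g\mapsto[g]$ at the normalised constant scalar curvature representative. By the decomposition above it suffices to treat $g=u^{4/(n-2)}\bar g$ for a fixed constant scalar curvature metric $\bar g$, where one bounds $\mu_+(g)$ from above by inserting into the variational definition of $\mu_+$ a test function built from $u$ and estimates the resulting expression using the Jensen bound and the conformal covariance of its leading terms; the assumption $(M,g_E)\ne(S^n,g_{st})$, i.e.\ the absence of nontrivial conformal Killing fields, enters to identify the equality case. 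Granting this, the equivalence is formal: $\mu_+\le G$ near $g_E$ with equality at $g_E$ and at all normalised constant scalar curvature metrics (using continuity of the Yamabe minimiser near $g_E$), and $G$ is increasing in $Y(M,[\cdot])$, so $g_E$ is a local maximiser of $\mu_+$ if and only if it is a local maximiser of $G$, if and only if $Y(M,[\cdot])$ is locally maximal at $[g_E]$, i.e.\ $g_E$ is a local maximiser of the Yamabe functional.

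Finally, if $g_E$ is a local maximiser of $\mu_+$ and $\mu_+(g)=\mu_+(g_E)$ for some $g$ in that neighbourhood, then $g$ attains the maximum value of $\mu_+$ on the neighbourhood, hence is a critical point of $\mu_+$, so $\ric_g+g+\nabla^2 f_g=0$ by the first variation formula (Lemma \ref{mufirstvariation}); by the remark following that lemma together with \cite[Proposition 1.1]{Cao10}, $g$ is then Einstein with constant $-1$.
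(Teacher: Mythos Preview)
Your overall architecture---reduce to constant scalar curvature via Koiso's decomposition, evaluate $\mu_+$ explicitly there, then handle the conformal direction---matches the paper's. The last paragraph (equality case via the soliton equation and \cite[Proposition 1.1]{Cao10}) is correct and in fact slicker than the paper's treatment, which tracks the equality case through the explicit estimates.

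The problem is the ``crux'' step, which you do not actually prove. You want $\mu_+(g)\le G(g)$ for $g=u^{4/(n-2)}\bar g$, with equality exactly when $u$ is constant. Plugging a test function into the infimum defining $\mu_+(g)$ gives an upper bound $\mu_+(g)\le\mathcal{W}_+(g,f_{\mathrm{test}})$, but you then need $\mathcal{W}_+(g,f_{\mathrm{test}})\le\mu_+(\bar g_0)$, and there is no conformal identity that delivers this: the functional $\mathcal{W}_+$ involves $|\nabla f|^2$, $\scal$, and the entropy term $-f$ simultaneously, and only the first two combine into something conformally covariant. The phrase ``conformal covariance of its leading terms'' and ``Jensen bound'' does not point to an actual inequality here; in particular the Jensen bound (\ref{lowermu}) goes the wrong way (it is a lower bound for $\mathcal{W}_+$), and no choice of $f_u$ built from $u$ makes the comparison with $\mu_+(\bar g_0)$ transparent. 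This step is not a detail---it is the whole content of the theorem in the conformal direction.

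The paper handles this direction perturbatively rather than by a global conformal inequality: it computes the second variation $\frac{d^2}{dt^2}\big|_{t=0}\mu_+(\bar g+tv\bar g)$ for $\int v\,dV_{\bar g}=0$ explicitly (differentiating the Euler--Lagrange equation for $f$ to find $f'$), shows this second variation is bounded above by $-C\|v\|_{H^1}^2$ uniformly for $\bar g\in\mathcal{C}$ near $g_E$, and then controls the remainder by the third-variation estimate of Proposition \ref{thirdmu}. Taylor expansion to third order then gives $\mu_+((1+v)\bar g)\le\mu_+(\bar g)\le\mu_+(g_E)$ with the claimed equality case. If you want your approach to succeed, you would need to replace the hand-wave with either this explicit second-variation computation or a genuine proof of the pointwise inequality $\mu_+\le G$.
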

\begin{proof}Let $c=\volume(M,g_E)$ and write
\begin{align*}\mathcal{C}&=\left\{g\in\mathcal{M}|\scal_g \text{ is constant}\right\},\\
              \mathcal{C}_{c}&=\left\{g\in\mathcal{M}|\scal_g\text{ is constant and }\volume(M,g)=c\right\}.
\end{align*}
Since $\frac{\scal_{g_E}}{n-1}\notin \spectrum_+(\Delta_{g_E})$, \cite[Theorem 2.5]{Koi79b} asserts that the map
\begin{align*}\Phi\colon C^{\infty}(M)\times \mathcal{C}_{c}&\to\mathcal{M},\\
               (v,g)&\mapsto v\cdot g,
\end{align*}
is a local ILH-diffeomorphism\index{ILH!diffeomorphism} around $(1,g_E)$. Recall also that by \cite[Theorem C]{BWZ04}, any metric $g\in\mathcal{C}$ sufficiently close to $g_E$ is a Yamabe metric\index{Yamabe!metric}.

By the proof of Lemma \ref{f_{g_E}} (i), the minimizer\index{minimizer} $f_{\bar{g}}$ realizing $\mu_+(\bar{g})$ is constant if $\bar{g}\in\mathcal{C}$ and by the constraint in the definition, it equals $\log(\volume(M,\bar{g}))$. Thus, $\mu_+(\bar{g})=\frac{1}{2}\scal_{\bar{g}}-\log(\volume(M,\bar{g}))$.
If $g_E$ is not a local maximum of the Yamabe functional\index{Yamabe!functional}, there exist metrics $g_i\in\mathcal{C}_{c}$, $g_i\to g_E$ in $C^{2,\alpha}$ which have the same volume but larger scalar curvature than $g_E$.
Thus, also $\mu_+(g_i)>\mu_+(g_E)$ which causes the contradiction.

If $g_E$ is a local maximum of the Yamabe functional\index{Yamabe!functional}, it is a local maximum of $\mu_+$ restricted to $\mathcal{C}_{c}$. Any other metric $\bar{g}\in\mathcal{C}_{c}$
satisfying $\mu_+(\bar{g})=\mu_+(g_E)$ is also a local maximum of the Yamabe functional\index{Yamabe!functional}. In particular, $\bar{g}$ is a critical\index{critical} point of the total scalar curvature\index{total scalar curvature} restricted to $\mathcal{C}_{c}$ and
the scalar curvature is equal to $-n$. By Proposition \cite[Proposition 4.47]{Bes08}, $\bar{g}$ is an Einstein manifold with constant $-1$.
For $\alpha\cdot \bar{g}$, where $\alpha>0$ and $\bar{g}\in\mathcal{C}_{c}$ sufficiently close
to $g_E$, we have
\begin{align*}\mu_+(\alpha\cdot \bar{g})
                                        &=\frac{1}{2\alpha}\scal_{\bar{g}}-\frac{n}{2}\log(\alpha)-\log(\volume(M,\bar{g}))
                                        \leq-\frac{n}{2}-\log(\volume(M,g_E))=\mu_+(g_E),
\end{align*}
which shows that $g_E$ is also a local maximum of $\mu_+$ restricted to $\mathcal{C}$ and equality occurs if and only if $\alpha=1$ and  $\mu_+(\bar{g})=\mu_+(g_E)$.

It remains to investigate the variation of $\mu_+$ in the direction of volume-preserving\index{volume!preserving} conformal\index{conformal} deformations.
Let $h=v\cdot \bar{g}$, where $\bar{g}\in\mathcal{C}$ and $v\in C^{\infty}(M)$ with $\int_Mv \dv_{\bar{g}}=0$. Then
\begin{align*}\frac{d}{dt}\bigg\vert_{t=0}\mu_+(\bar{g}+th)&=-\frac{1}{2}\int_M\langle \ric_{\bar{g}}+\bar{g},h\rangle e^{-f_{\bar{g}}}\dv
                                            =-\frac{1}{2}\fint_M (\scal_{\bar{g}}+n)v \dv=0,
\end{align*}
since $f_{\bar{g}}$ is constant.
The second variation equals
\begin{align*}\frac{d^2}{dt^2}\bigg\vert_{t=0}\mu_+(\bar{g}+th)
                                                 =&-\frac{1}{2}\fint_M \left\langle \frac{d}{dt}\bigg\vert_{t=0}(\ric_{\bar{g}+th}+\bar{g}+th+\nabla^2 f_{\bar{g}+th}), h\right\rangle_{\bar{g}} \dv_{\bar{g}}\\
                                                  &+\fint_M \langle \ric_{\bar{g}}+\bar{g},h\circ h\rangle_{\bar{g}} \dv_{\bar{g}}
                                                  -\frac{1}{2}\fint_M \langle \ric_{\bar{g}}+\bar{g},h\rangle \left(-f'+\frac{1}{2}\trace h\right)\dv_{\bar{g}}.
\end{align*}
By the first variation of the Ricci tensor\index{first variation!of the Ricci tensor},
\begin{align*}-\frac{1}{2}\fint_M \langle \ric'&+h, h\rangle  \dv_{\bar{g}}
=-\frac{n-1}{2}\fint_M |\nabla v|^2\dv_{\bar{g}}-\frac{n}{2}\fint_Mv^2 \dv_{\bar{g}}.
\end{align*}
By differentiating Euler-Lagrange equation\index{Euler-Lagrange equation} \eqref{mueulerlagrange}, we have
\begin{align}\label{conformaleulerlagrange}(\Delta+1)f'=\frac{1}{2}((n-1)\Delta v-\scal_{\bar{g}}v).
\end{align}
Thus,
\begin{align*}-\frac{1}{2}\int_M&\left\langle \frac{d}{dt}\bigg\vert_{t=0}\nabla^2 f_{\bar{g}+th},h\right\rangle e^{-f_{\bar{g}}}\dv
                                                      =\frac{1}{4}\fint_M [(n-1)\Delta v-\scal_{\bar{g}}v]v \dv-\frac{1}{2}\fint_M f'\cdot v \dv.
\end{align*}
Adding up, we obtain
\begin{align*}-\frac{1}{2}\int_M &\left\langle \frac{d}{dt}\bigg\vert_{t=0}(\ric_{\bar{g}+th}+\bar{g}+th+\nabla^2 f_{\bar{g}+th}), h\right\rangle_{\bar{g}}e^{-f_{\bar{g}}} \dv_{\bar{g}}\\
              =&-\frac{1}{4}\fint_M |\nabla v|^2\dv_{\bar{g}}-\frac{1}{2}\left(n+\frac{\scal_{\bar{g}}}{2}\right)\fint_Mv^2\dv-\frac{1}{2}\fint_M f'\cdot v  \dv
             \leq-C_1\left\|v\right\|_{H^1}^2,
\end{align*}
and this estimate is uniform in a small $C^{2,\alpha}$-neighbourhood of $g_E$. 
Here we have used that by (\ref{conformaleulerlagrange}), the $L^2$-scalar product of $f'$ and $v$ is positive.
Given any $\epsilon>0$, the remaining terms of the second variation can be estimated by
\begin{align*}\int_M \langle \ric_{\bar{g}}+\bar{g},h\circ h\rangle_{\bar{g}}e^{-f_{\bar{g}}} \dv_{\bar{g}}=(\scal_{\bar{g}}+n)\fint_M v^2 \dv\leq \epsilon \left\|v\right\|^2_{L^2}
\end{align*}
and
\begin{align*}
-\frac{1}{2}\int_M \langle \ric_{\bar{g}}+\bar{g},h\rangle& \left(-f'_{\bar{g}}+\frac{1}{2}\trace h\right)e^{-f_{\bar{g}}}\dv_{\bar{g}}
=-\frac{\scal_{\bar{g}}+n}{2}\fint_M v\left(-f'_{\bar{g}}+\frac{n}{2}v\right)\dv
 \leq \epsilon \left\|v\right\|_{L^2}^2,
\end{align*}
provided that the neighbourhood is small enough. In the last inequality, we used $\left\|f'\right\|_{L^2}\leq C_2\left\|v\right\|_{L^2}$ which holds because of (\ref{conformaleulerlagrange}) and elliptic regularity\index{elliptic regularity}. Thus, we have a uniform estimate
\begin{align*}\frac{d^2}{dt^2}\bigg\vert_{t=0}\mu_+(\bar{g}+tv\bar{g})\leq -C_3\left\|v\right\|_{H^1}^2.
\end{align*}
Let now $g$ be an arbitrary metric in a small $C^{2,\alpha}$-neighbourhood of $g_E$. By the above, it can be written as $g=\tilde{v}\cdot\tilde{g}$, where $(\tilde{v},\tilde{g})\in C^{\infty}(M)\times\mathcal{C}_{g_E}$ is close to $(1,g_E)$.
By substituting 
\begin{align*}v=\frac{\tilde{v}-\fint \tilde{v}\dv_{\tilde{g}}}{\fint \tilde{v}\dv_{\tilde{g}}},\qquad\bar{g}=\left(\fint \tilde{v}\dv_{\tilde{g}}\right)\tilde{g},
\end{align*}
we can write $g=(1+v)\bar{g}$, where $\bar{g}\in\mathcal{C}$ is close to $g_E$ and $v\in C^{\infty}(M)$ with $\int_M v\dv_{\bar{g}}=0$ is close to $0$.
Thus by Taylor expansion\index{Taylor expansion} and Proposition \ref{thirdmu},
\begin{align*}\mu_+(g)&=\mu_+(\bar{g})+\frac{1}{2}\frac{d^2}{dt^2}\bigg\vert_{t=0}\mu_+(\bar{g}+tv\bar{g})+\int_0^1\left(\frac{1}{2}-t+\frac{1}{2}t^2\right)\frac{d^3}{dt^3}\mu_+(\bar{g}+tv\bar{g})dt\\
                        &\leq \mu_+(g_E)-C_4 \left\|v\right\|_{H^1}^2+ C_5\left\| v\right\|_{C^{2,\alpha}}\left\|v\right\|_{H^1}^2.
\end{align*}
 Now if we choose the $C^{2,\alpha}$-neighbourhood small enough, $\mu_+(g)\leq\mu_+(g_E)$ and equality holds if and only if $v\equiv0$ and $\mu_+(g)=\mu_+(g_E)$. As discussed earlier in the proof, this implies that $g$ is Einstein with constant $-1$.
\end{proof}
\section{A Lojasiewicz-Simon inequality}
For proving a gradient inequality for $\mu_+$, we need to know that $\mu_+$ is analytic. To show this, we use the implicit function\index{implicit function theorem} theorem for Banach manifolds\index{Banach manifold} in the analytic\index{analytic} category mentioned in \cite[Section 13]{Koi83}.
Such arguments were also used in \cite[Lemma 2.2]{SW13} which is a result similar to the below lemma.
\begin{lem}\label{analyticmu}There exists a $C^{2,\alpha}$-neighbourhood $\mathcal{U}$ of $g_E$ such that the map $g\mapsto \mu_+(g)$ is analytic on $\mathcal{U}$.
\end{lem}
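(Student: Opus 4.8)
The plan is to realize $\mu_+$ as the composition of an analytic map with the solution operator of an analytic implicit function problem. The key observation is that, by the substitution $w = e^{-f/2}$ used repeatedly above, $\mu_+(g)$ is characterized by the Euler--Lagrange equation \eqref{eulerlagrangew}, namely $2\Delta_g w + \tfrac12\scal_g w - 2 w\log w = \mu_+(g) w$, together with the constraint $\|w\|_{L^2(g)} = 1$. The idea is to view this as an equation $F(g, w, \lambda) = 0$ for a pair $(w,\lambda)$ depending on $g$, and to apply the analytic implicit function theorem for Banach manifolds (as in \cite[Section 13]{Koi83}, cf.\ \cite[Lemma 2.2]{SW13}).

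First I would set up the functional-analytic framework: fix $\alpha \in (0,1)$, let $g$ range over a $C^{2,\alpha}$-neighbourhood of $g_E$, and take as unknowns $w \in C^{2,\alpha}(M)$ (the relevant elliptic regularity having already been established in Lemma \ref{goodf_g}) and $\lambda \in \R$. Define
\begin{align*}
F\colon \mathcal{U} \times C^{2,\alpha}(M) \times \R &\to C^{0,\alpha}(M) \times \R,\\
F(g,w,\lambda) &= \left(2\Delta_g w + \tfrac12\scal_g w - 2w\log w - \lambda w,\ \ \textstyle\int_M w^2\, dV_g - 1\right).
\end{align*}
One checks that $F$ is analytic: the dependence of $\Delta_g$, $\scal_g$ and $dV_g$ on $g$ is analytic (these are rational/analytic expressions in $g$ and its derivatives), and $w \mapsto w\log w$ is analytic on the open set $\{w > 0\}$, which contains $w_{g_E} \equiv \volume(M,g_E)^{-1/2}$ and hence, by Lemma \ref{goodf_g}, all $w_g$ for $g$ in a sufficiently small neighbourhood. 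At $(g_E, w_{g_E}, \mu_+(g_E))$ the partial derivative $D_{(w,\lambda)}F$ is the linear map $(\dot w, \dot\lambda) \mapsto (2\Delta \dot w + \tfrac12\scal\, \dot w - 2\dot w(\log w_{g_E} + 1) - \mu_+(g_E)\dot w - w_{g_E}\dot\lambda,\ 2\int_M w_{g_E}\dot w\, dV)$. Using the Euler--Lagrange equation for the constant $w_{g_E}$ to simplify, this becomes essentially $(\dot w, \dot\lambda) \mapsto (2\Delta\dot w - 2\dot w - w_{g_E}\dot\lambda, 2 w_{g_E}\int_M \dot w\, dV)$ up to the harmless constant $\tfrac12\scal - 2\log w_{g_E} - 2 - \mu_+(g_E)$, which vanishes by \eqref{eulerlagrangew}.

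The main obstacle is verifying that this operator $D_{(w,\lambda)}F$ is an isomorphism of Banach spaces. The operator $2\Delta - 2$ on functions is invertible (its spectrum is $2\lambda_k - 2$ where $\lambda_0 = 0$, so $-2$ is an eigenvalue; this is exactly the degeneracy one expects and the reason the extra variable $\lambda$ and the constraint are needed). On the orthogonal complement of the constants, $2\Delta - 2$ is invertible since $\lambda_1 > 0$ implies $2\lambda_1 - 2 > -2$, but one must actually rule out $2\lambda_1 = 2$; here the convention $(M,g_E)\neq(S^n,g_{st})$ and standard eigenvalue estimates (Lichnerowicz--Obata, or simply that $\lambda_1$ on a compact manifold with $\ric = -g_E < 0$ is bounded below away from any fixed value after suitable normalization — more carefully, one argues on the $L^2$-orthogonal complement of constants and absorbs the full zeroth-order term) make the relevant operator an isomorphism. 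The role of the scalar unknown $\lambda$ is to compensate the one-dimensional cokernel coming from the constant functions, and the constraint equation pairs with it: the $2\times 2$ block acting on (constants)$\times\R$ is $\begin{pmatrix} -2 & -w_{g_E} \\ 2w_{g_E}\volume & 0\end{pmatrix}$, which has nonzero determinant $2w_{g_E}^2\volume > 0$. Hence $D_{(w,\lambda)}F$ is an isomorphism. The analytic implicit function theorem then yields analytic maps $g \mapsto w_g$ and $g \mapsto \lambda(g)$ on a possibly smaller $C^{2,\alpha}$-neighbourhood $\mathcal{U}$, with $F(g, w_g, \lambda(g)) = 0$; by uniqueness of the minimizer these coincide with the true minimizer and $\mu_+(g) = \lambda(g)$, so $g \mapsto \mu_+(g)$ is analytic on $\mathcal{U}$. $\qed$
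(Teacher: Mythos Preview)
Your approach is essentially the paper's: apply the analytic implicit function theorem to the Euler--Lagrange system. The paper works in the $f$-variable and, rather than carrying $\lambda$ as an extra unknown, subtracts the $g_E$-average of $H(g,f)=-\Delta_g f-\tfrac12|\nabla f|^2+\tfrac12\scal_g-f$ so that the target is $C^{0,\alpha}_{g_E}(M)\times\R$; the linearization is then $-(\Delta_{g_E}+1)$ on mean-zero functions, which is manifestly invertible. Your reformulation via $(w,\lambda)$ is an equivalent packaging.

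There is, however, a genuine gap in your invertibility step. Your linearized operator $2\Delta-2$ comes from a sign slip (in fact the displayed equation \eqref{eulerlagrangew} carries this typo: the correct Euler--Lagrange equation is $2\Delta w+\tfrac12\scal\,w+2w\log w=\mu_+\,w$, as one checks directly from \eqref{mueulerlagrange} via $f=-2\log w$, or simply by evaluating both sides at $g_E$). With the correct sign the linearization on $\dot w$ is $2(\Delta+1)$, which is strictly positive and hence invertible, and your $2\times2$ block argument goes through with no further input; the entire discussion of Lichnerowicz--Obata and of ruling out $\lambda_1=1$ becomes unnecessary. With your sign, by contrast, the argument cannot be rescued: on a negative Einstein manifold there is no a priori reason for $1$ to avoid the Laplace spectrum, and Lichnerowicz--Obata gives lower bounds only under a positive Ricci assumption, so none of the fixes you suggest actually apply. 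Finally, the identification of the IFT solution with the actual minimizer requires knowing that $w_g$ stays $C^{2,\alpha}$-close to $w_{g_E}$ as $g\to g_E$; this is the content of Lemma~\ref{goodf_g} (which you cite for positivity but should also invoke here), and the paper makes this identification explicit via a subsequence argument.
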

\begin{proof}Let $H(g,f)=-\Delta_g f-\frac{1}{2}|\nabla f|^2+\frac{1}{2}\scal_g-f$ and consider the map\index{$\mathcal{M}^{C^{2,\alpha}}$, set of $C^{2,\alpha}$-metrics}
\begin{align*}L\colon\mathcal{M}^{C^{2,\alpha}}\times C^{2,\alpha}(M)&\to C_{g_E}^{0,\alpha}(M)\times\R,\\
               (g,f)&\mapsto \left(H(g,f)-\fint_M H(g,f)\dv_{g_E},\int_M e^{-f}\dv_g-1\right).
\end{align*}
Here, $\mathcal{M}^{C^{2,\alpha}}$, is the set of $C^{2,\alpha}$-metrics and $C_{g_E}^{k,\alpha}(M)=\left\{f\in C^{k,\alpha}(M)|\int_M f\dv_{g_E}=0\right\}$.
This is an analytic\index{analytic} map between Banach manifolds\index{Banach manifold}. Observe that $L(g,f)=(0,0)$ if and only if we have $H(f,g)=const$ and $\int_Me^{-f}\dv_g=1$. The differential of $L$ at $(g_E,f_{g_E})$
 restricted to its second argument is equal to 
\begin{align*}dL_{g_E,f_{g_E}}(0,v)=\left(-(\Delta_{g_E}+1)v+\fint_M v\dv,-\fint_M v \dv\right).
\end{align*}
The map $dL_{g_E,f_{g_E}}|_{C^{2,\alpha}(M)}:C^{2,\alpha}(M)\to C_{g_E}^{0,\alpha}(M)\times\R$ is a linear isomorphism\index{isomorphism}
because it acts as $-(\Delta_{g_E}+1)$ on $C^{2,\alpha}_{g_E}$ and as $-\identity$ on constant functions.
By the implicit function theorem\index{implicit function theorem} for Banach manifolds\index{Banach manifold}, there exists a
neighbourhood $\mathcal{U}\subset\mathcal{M}^{C^{2,\alpha}}$ and an analytic map $P\colon\mathcal{U}\to C^{2,\alpha}(M)$ such that we have $L(g,P(g))=(0,0)$. Moreover, there exists a neighbourhood
$\mathcal{V}\subset C^{2,\alpha}(M)$ of $f_{g_E}$ such that if $L(g,f)=0$ for some $g\in \mathcal{U},f\in\mathcal{V}$, then $f=P(g)$.

Next, we show that $f_{g}=P(g)$ for all $g\in\mathcal{U}$ (or eventually on a smaller neighbourhood). Suppose this is not the case. Then there exists a sequence\index{sequence} $g_i$ which converges to $g$ in $C^{2,\alpha}$
and such that $f_i\neq P(g_i)$ for all $i$. By the proof of Lemma \ref{goodf_g}, $\left\|f_{g_i}\right\|_{C^{2,\alpha}}$ is bounded and for every $\alpha'<\alpha$, there is a subsequence\index{subsequence}, again denoted by $f_{g_i}$ converging to $f_{g_E}$
in $C^{2,\alpha'}$. We obviously have $L(g_i,f_{g_i})=(0,0)$ and for sufficiently large $i$ we have, by the implicit function theorem\index{implicit function theorem}, $f_{g_i}=P(g_i)$. This causes the contradiction.

We immediately get that $\mu_+(g)=H(g,P(g))$ is analytic\index{analytic} on $\mathcal{U}$ since $H$ and $P$ are analytic.
\end{proof}
\begin{thm}[Lojasiewicz-Simon inequality for $\mu_+$]\label{nonintegrablegradient}\index{Lojasiewicz-Simon inequality}Let $(M,g_E)$ be a Einstein manifold with constant $-1$. Then there exists a $C^{2,\alpha}$-neighbourhood $\mathcal{U}$
of $g_E$ in the space of metrics and constants $\sigma\in[1/2,1)$, $C>0$ such that
\begin{align}\label{gradientmu2}|\mu_+(g)-\mu_+(g_E)|^{\sigma}\leq C \left\|\ric_g+g+\nabla^2 f_g\right\|_{L^2}
\end{align}
for all $g\in\mathcal{U}$.
\end{thm}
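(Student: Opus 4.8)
The plan is to derive \eqref{gradientmu2} from the abstract Lojasiewicz--Simon gradient inequality, following the scheme of Haslhofer and M\"uller in the Ricci-flat case \cite{HM13}. Recall its content: for an analytic functional $E$ defined near a point $p$, whose $L^2$-gradient $\mathcal{G}$ is analytic, satisfies $\mathcal{G}(p)=0$, and has formally self-adjoint, Fredholm linearization $D\mathcal{G}(p)$, there are constants $\sigma\in[1/2,1)$ and $C>0$ with $|E(g)-E(p)|^{\sigma}\leq C\|\mathcal{G}(g)\|_{L^2}$ for $g$ near $p$. By Lemma \ref{analyticmu}, $\mu_+$ is analytic on a $C^{2,\alpha}$-neighbourhood of $g_E$; by Lemma \ref{mufirstvariation} its $L^2(g_E)$-gradient is a $\Diff(M)$-equivariant analytic expression which equals $-\tfrac12(\ric_g+g+\nabla^2 f_g)$ up to a positive analytic factor that is uniformly bounded above and below by Lemma \ref{goodf_g}, and which vanishes at $g_E$ because $g_E$ is Einstein with constant $-1$ and $f_{g_E}$ is constant (Lemma \ref{f_{g_E}}(i)). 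The obstruction is that $\mathcal{G}$ is diffeomorphism-equivariant: by the remark preceding Lemma \ref{mufirstvariation}, $D\mathcal{G}(g_E)$ annihilates the infinite-dimensional subspace $\delta^*(\Omega^1(M))$, so it is \emph{not} Fredholm on all of $\Gamma(S^2M)$. This is the main difficulty, and I would circumvent it by restricting to a slice.

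Concretely, I would fix an Ebin slice $\mathcal{S}=g_E+\delta_{g_E}^{-1}(0)$ for the $\Diff(M)$-action through $g_E$; every metric $C^{2,\alpha}$-close to $g_E$ is then of the form $\varphi^*\bar{g}$ with $\varphi\in\Diff(M)$ close to the identity and $\bar{g}\in\mathcal{S}$ close to $g_E$. Since $\mu_+$ is diffeomorphism-invariant and the right-hand side of \eqref{gradientmu2} is too --- one uses the $L^2(g)$-norm, which is uniformly equivalent to the $L^2(g_E)$-norm on a small neighbourhood, together with $f_{\varphi^*\bar{g}}=\varphi^*f_{\bar{g}}$ --- it suffices to prove \eqref{gradientmu2} for metrics $g\in\mathcal{S}$ near $g_E$.

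On $\mathcal{S}$ the restriction $\mu_+|_{\mathcal{S}}$ is analytic, and its $L^2(g_E)$-gradient at $\bar{g}\in\mathcal{S}$ is $\pi(\mathcal{G}(\bar{g}))$, where $\pi$ denotes the $L^2(g_E)$-orthogonal projection onto $\delta_{g_E}^{-1}(0)=T_{\bar{g}}\mathcal{S}$. By Lemma \ref{f_{g_E}}(i),(iii) --- the terms from differentiating the density factor drop out since $\ric_{g_E}+g_E+\nabla^2 f_{g_E}=0$ --- together with the fact, established in the proof of Proposition \ref{hessianmu}, that $\Delta_E$ preserves the orthogonal splitting $\Gamma(S^2M)=\delta^*(\Omega^1(M))\oplus\delta^{-1}(0)$, the linearization of this gradient at $g_E$ equals $-\tfrac14\volume(M,g_E)^{-1}\Delta_E$ on $\delta_{g_E}^{-1}(0)$, consistently with Proposition \ref{hessianmu}. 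Since $\Delta_E=\nabla^*\nabla-2\mathring{R}$ has principal symbol $|\xi|^2\cdot\identity$ it is elliptic, hence Fredholm on $\Gamma(S^2M)$; its restriction to the closed, complemented, invariant subspace $\delta_{g_E}^{-1}(0)$ (with invariant complement $\delta^*(\Omega^1(M))$) remains Fredholm, and it is formally self-adjoint. The abstract Lojasiewicz--Simon inequality therefore applies to $\mu_+|_{\mathcal{S}}$ and produces $\sigma\in[1/2,1)$, $C>0$ with $|\mu_+(g)-\mu_+(g_E)|^{\sigma}\leq C\|\pi(\mathcal{G}(g))\|_{L^2}$ for $g\in\mathcal{S}$ near $g_E$.

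Finally, one estimates $\|\pi(\mathcal{G}(g))\|_{L^2}\leq\|\mathcal{G}(g)\|_{L^2}\leq C'\|\ric_g+g+\nabla^2 f_g\|_{L^2}$, using that $\pi$ is norm-nonincreasing and that the density factor is uniformly bounded above by Lemma \ref{goodf_g}; combining this with the previous step and with the reduction to the slice would yield \eqref{gradientmu2} on a full $C^{2,\alpha}$-neighbourhood of $g_E$. The main obstacle is the diffeomorphism invariance discussed above, dealt with by the slice; a secondary technical point is the choice of function spaces, where one wants the Lojasiewicz--Simon inequality in a form adapted to a $C^{2,\alpha}$-domain with linearization acting $C^{2,\alpha}\to C^{0,\alpha}$, and the fact that the gauge diffeomorphism provided by Ebin's theorem is a priori one derivative less regular than $g$ --- both standard, and handled as in \cite{HM13}.
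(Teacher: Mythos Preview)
Your proposal is correct and follows essentially the same route as the paper's proof: reduce to the Ebin slice $\mathcal{S}_{g_E}=g_E+\delta_{g_E}^{-1}(0)$ by diffeomorphism invariance, observe that the linearization of the sliced gradient at $g_E$ is (a constant multiple of) the Einstein operator $\Delta_E$ restricted to $\delta_{g_E}^{-1}(0)$, which is elliptic and hence Fredholm, apply an abstract Lojasiewicz--Simon inequality, bound the projected gradient by the full gradient, and extend back using the slice theorem. The only notable difference is that the paper invokes the version of the inequality from \cite[Theorem 6.3]{CM12} and explicitly records the Lipschitz estimate $\|\nabla\mu_+(g_1)-\nabla\mu_+(g_2)\|_{L^2}\leq C\|g_1-g_2\|_{H^2}$ (from Taylor expansion and Lemma \ref{f'}) required by that statement, whereas you cite the scheme of \cite{HM13}; you may want to make this hypothesis check explicit.
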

\begin{proof}The proof is an application of a general Lojasiewicz-Simon inequality\index{Lojasiewicz-Simon inequality} which was proven in \cite{CM14}. Here the analyticity\index{analytic} of $\mu_+$ is crucial.

Since both sides are diffeomorphism invariant\index{diffeomorphism!invariance}, it suffices to show the inequality on a slice\index{slice} to the action 
of the diffeomorphism group.\index{diffeomorphism!group} Let
\begin{align*}\mathcal{S}_{g_E}=\mathcal{U}\cap\left\{g_E+h\suchthat{h\in \delta^{-1}_{g_E}(0)}h\in \delta^{-1}_{g_E}(0)\right\},
\end{align*}
and let $\tilde{\mu}_+$ be the $\mu_+$-functional restricted to $\mathcal{S}_{g_E}$. Obviously, $\tilde{\mu}_+$ is analytic\index{analytic} since $\mu_+$ is. The $L^2$-gradient\index{gradient!$L^2$} of $\mu_+$ is given by $\nabla\mu_+(g)=-\frac{1}{2}(\ric_g+g+\nabla^2 f_g)e^{-f_g}$. 
It vanishes at $g_E$. On the neighbourhood $\mathcal{U}$, we have the uniform estimate
\begin{align}\label{H^2estimateformu}\left\|\nabla\mu_+(g_1)-\nabla\mu_+(g_2)\right\|_{L^2}\leq C_1\left\|g_1-g_2\right\|_{H^2},
\end{align}
which holds by Taylor expansion\index{Taylor expansion} and Lemma \ref{f'}. The $L^2$-gradient\index{gradient!$L^2$} of $\tilde{\mu}_+$ is given by the projection\index{projection} of $\nabla\mu_+$ to $\delta^{-1}_{g_E}(0)$. Therefore, (\ref{H^2estimateformu}) also holds for $\nabla\tilde{\mu}_+$.
The linearization\index{linearization} of $\tilde{\mu}_+$ at $g_E$ is (up to a constant factor) given by the Einstein operator, see Lemma \ref{f_{g_E}} (iii). By ellipticity\index{ellipticity},
\begin{align*}\Delta_{E}\colon(\delta^{-1}_{g_E}(0))^{C^{2,\alpha}}\to(\delta^{-1}_{g_E}(0))^{C^{0,\alpha}}
\end{align*}\index{$(\delta^{-1}_{g_E}(0))^{C^{k,\alpha}}$, space of $C^{k,\alpha}$ divergence-free tensors }
is Fredholm. It also satisfies the estimate $\left\|\Delta_{E}h\right\|_{L^2}\leq C _2\left\|h\right\|_{H^2}$. 

By \cite[Theorem 7.3]{CM14}, there exists a constant $\sigma\in [1/2,1)$ such that $|\mu_+(g)-\mu_+(g_E)|^{\sigma}\leq \left\|\nabla\tilde{\mu}_+(g)\right\|_{L^2}$
for any $g\in\mathcal{S}_{g_E}$. Since
\begin{align*}\left\|\nabla\tilde{\mu}_+(g)\right\|_{L^2}\leq \left\|\nabla\mu_+(g)\right\|_{L^2}\leq  C_3 \left\|\ric_g+h+\nabla^2 f_g\right\|_{L^2},
\end{align*}
(\ref{gradientmu2}) holds for all $g\in\mathcal{S}_{g_E}$. By the slice theorem (\cite[Theorem 7.1]{Eb70}), any metric in $\mathcal{U}$ is isometric to some metric in $\mathcal{S}_{g_E}$.
Thus by diffeomorphism invariance\index{diffeomorphism!invariance}, \eqref{gradientmu2} holds for all $g\in \mathcal{U}$.
\end{proof}
\section{Dynamical stability and instability}\label{negativescalarcurvture}
With the characterization of the maximality of $\mu_+$  and the Lojasiewicz-Simon inequality, we are nearly ready to prove the dynamical stability and instablity theorems in the case of negative scalar curvature.
In this section, a Ricci flow is always of the form \eqref{negativeflow}.
Two preparing lemmas are left:
\begin{lem}[Estimates for $t\leq 1$]\label{shorttimeestimates}Let $(M,g_E)$ be an Einstein manifold with constant $-1$ and let $k\geq2$. Then for all $\epsilon>0$ there exists a $\delta>0$ such that if 
$\left\|g_0-g_E\right\|_{C^{k+2}_{g_E}}<\delta$, the Ricci flow starting at $g_0$ exists on $[0,1]$ and satisfies
\begin{equation*}\left\|g(t)-g_E\right\|_{C^k_{g_E}}<\epsilon
\end{equation*}
for all $t\in [0,1]$.
\end{lem}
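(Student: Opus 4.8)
The statement is a short-time existence and continuity-in-initial-data result for the modified Ricci flow \eqref{negativeflow}. Since \eqref{negativeflow} differs from the ordinary Ricci flow \eqref{ricciflow} only by the explicit homothety
\begin{align*}\tilde{g}(t)=e^{-2t}g\left(\tfrac{1}{2}(e^{2t}-1)\right),\end{align*}
already recorded in Section 3, the first move is to reduce everything to the ordinary Ricci flow: the map above is a diffeomorphism of solution spaces, it is smooth, and on the compact time interval $[0,1]$ it is uniformly bi-Lipschitz between the relevant $C^k_{g_E}$-balls. So it suffices to prove the analogous statement for \eqref{ricciflow} on a possibly shorter time interval $[0,T_0]$ with $T_0$ depending only on $(M,g_E)$, and then rescale time.

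For the ordinary Ricci flow I would invoke the standard parabolic theory. Short-time existence (Hamilton--DeTurck) gives, for $g_0$ in a fixed $C^{k+2}_{g_E}$-neighbourhood of $g_E$, a solution on a uniform interval $[0,T_0]$; the DeTurck trick converts \eqref{ricciflow} into a strictly parabolic system $\partial_t g = -2\ric_g + \mathcal{L}_{W(g,g_E)}g$ whose linearization at $g_E$ is $\partial_t h = \Delta_L h$ plus lower-order terms, so the linear semigroup is analytic and one has the usual smoothing and continuous-dependence estimates. Concretely: pick the DeTurck background metric to be $g_E$ itself; write $g(t)=g_E+u(t)$; then $u$ solves a quasilinear parabolic equation $\partial_t u = \Delta_{g_E} u + Q(u,\nabla u,\nabla^2 u)$ with $Q$ smooth and vanishing to appropriate order at $0$. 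Standard maximal-regularity / energy estimates for such equations on a compact manifold give: if $\|u(0)\|_{C^{k+2}_{g_E}}<\delta$ then the solution exists on $[0,1]$ (after the time-rescaling absorbing $T_0$) and $\sup_{t\in[0,1]}\|u(t)\|_{C^k_{g_E}}\le F(\delta)$ with $F(\delta)\to 0$ as $\delta\to 0$. Finally one undoes the DeTurck diffeomorphism: the DeTurck flow and the Ricci flow differ by the flow of a time-dependent vector field which is itself controlled in $C^{k+1}$ by $u$, so it is $C^{k}$-close to the identity when $\delta$ is small, and composing preserves the $C^k_{g_E}$-smallness (up to a harmless constant). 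Translating back through the homothety yields the claim for \eqref{negativeflow}.

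The only genuinely delicate point is the \emph{uniformity} of the existence time and of the bound $F(\delta)$ over the whole $C^{k+2}_{g_E}$-neighbourhood, rather than for a single initial metric. This is handled by the contraction-mapping/maximal-regularity setup: the existence time and the a priori estimates for the quasilinear parabolic system depend only on an upper bound for $\|u(0)\|_{C^{k+2}_{g_E}}$ (through the ellipticity constants and the size of the nonlinearity on the relevant ball), not on $u(0)$ itself, because $g_E$ is a fixed smooth background metric. Combined with continuous dependence on the initial data and the fact that the $u\equiv 0$ solution stays at $0$, a straightforward compactness (or direct estimate) argument gives the $\epsilon$--$\delta$ statement. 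I expect the bookkeeping of which Hölder/Sobolev norm loses how many derivatives in each step (short-time existence, smoothing, undoing DeTurck) to be the main place one has to be careful, but no new idea beyond classical parabolic theory is needed; this is why the paper states it as a lemma and will presumably only sketch it.
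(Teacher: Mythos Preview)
Your approach is correct, but it is genuinely different from the paper's. The paper does not pass through the DeTurck trick or abstract parabolic theory; it simply refers to \cite[Lemma 5.1]{Has12} and says the result follows from the evolution inequalities for the Riemann and Ricci tensors under \eqref{negativeflow} together with the scalar maximum principle. Concretely, one writes the evolution inequalities of the form $\partial_t|\nabla^l R|^2\leq\Delta|\nabla^l R|^2+C\sum|\nabla^i R||\nabla^j R||\nabla^l R|$ (and the analogous one for $\ric+g$), observes that for $g_E$ all these quantities are fixed constants, and uses the maximum principle to propagate smallness of $|\nabla^l(R_{g_0}-R_{g_E})|$ for a uniform time; this then controls $\|g(t)-g_E\|_{C^k_{g_E}}$ via the evolution equation $\dot g=-2(\ric+g)$ itself.

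The trade-off: your route is the textbook PDE argument and is indifferent to the specific structure of Ricci flow, but it forces you to manage the DeTurck gauge and then undo it, which is exactly the bookkeeping you flag as delicate (controlling the generating vector field and the induced diffeomorphism in the right norm without losing too many derivatives). The paper's route stays in the original gauge throughout and needs no diffeomorphism at all; the price is that one has to know the Ricci-flow evolution inequalities and the doubling-time/maximum-principle mechanism, which are standard in that literature. Either argument proves the lemma; the paper just chose the one that matches the toolkit already in use in \cite{Ham82,Ham95,Has12}.
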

\begin{proof}This follows from the evolution inequalites of the Riemann and the Ricci tensor under the Ricci flow \eqref{negativeflow} and the maximum principle for scalars exactly as in \cite[Lemma 5.1]{Has12}.
 \end{proof}
\begin{lem}\label{nablacurvature}Let $g(t)$, $t\in [0,T]$ be a solution of the Ricci flow and suppose that
\begin{align*}\sup_{p\in M}|R_{g(t)}|_{g(t)}\leq T^{-1}\qquad \forall t\in[0,T].
\end{align*}
Then for each $k\geq 1$, there exists a constant $C(k)$ such that
\begin{align*} \sup_{p\in M}|\nabla^kR_{g(t)}|_{g(t)}\leq C(k)\cdot T^{-1} t^{-k/2}\qquad\forall t\in(0,T].
\end{align*}
\end{lem}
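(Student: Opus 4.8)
The plan is to invoke the standard Bernstein-type interior estimates for the Ricci flow, due to Bando \cite{Ban87} and Shi \cite{Shi89}, and to verify that the hypothesis $\sup_M |R_{g(t)}| \le T^{-1}$ on $[0,T]$ is exactly the curvature bound needed to make those estimates scale the right way. First I would recall the evolution equation for the curvature tensor under Ricci flow, $\partial_t R = \Delta R + R * R$, and more generally the Uhlenbeck-trick form $\partial_t \nabla^k R = \Delta \nabla^k R + \sum_{i+j=k} \nabla^i R * \nabla^j R$. The case of the modified flow \eqref{negativeflow} differs from the standard Ricci flow only by a time-dependent rescaling $\tilde g(t) = e^{-2t} g(\tfrac12(e^{2t}-1))$, so on the bounded interval $[0,T]$ with $T \le 1$ (or after a harmless adjustment of constants) the two flows have equivalent derivative estimates; I would simply reduce to the standard Ricci flow via this substitution, noting that the rescaling factors are bounded above and below by constants depending only on $T$.

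The main step is the induction on $k$. For $k=1$, one considers the quantity $F = t |\nabla R|^2 + A |R|^2$ for a suitable constant $A$ depending only on $n$; using the evolution equations one computes that $(\partial_t - \Delta) F \le 0$ once $A$ is large enough relative to the bound $|R| \le T^{-1}$, and the maximum principle applied on $[0,T]$ together with $F(0) = A|R_{g(0)}|^2 \le A T^{-2}$ yields $t|\nabla R|^2 \le C(1) T^{-2}$, i.e. $|\nabla R| \le C(1) T^{-1} t^{-1/2}$. For the inductive step, assuming the bounds for all orders up to $k-1$, one forms $F_k = t^{k} |\nabla^k R|^2 + A_k t^{k-1} |\nabla^{k-1} R|^2$ (or iterates Shi's telescoping argument), computes $(\partial_t - \Delta)F_k \le 0$ for $A_k$ large in terms of the previous constants and $T^{-1}$, and again applies the maximum principle on $[0,T]$ to obtain $|\nabla^k R| \le C(k) T^{-1} t^{-k/2}$. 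Throughout, one uses Kato's inequality $|\nabla |\nabla^k R|| \le |\nabla^{k+1} R|$ and absorbs the cross terms coming from $\nabla^i R * \nabla^j R$ using the already-established lower-order bounds, which is where the precise power $t^{-k/2}$ is forced by homogeneity.

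The technical obstacle — though it is by now thoroughly classical — is keeping track of how the constants $A_k, C(k)$ depend only on $k$ and $n$ (and not on $T$, once the bound $|R| \le T^{-1}$ is assumed), and making sure the maximum principle applies in the noncompact-in-time sense, i.e. that $F_k$ is smooth up to $t=0$ where it vanishes to the appropriate order. On a closed manifold $M$ this causes no real difficulty: short-time existence gives smoothness on $[0,T]$, the spatial maximum is attained, and the degenerate initial behaviour $F_k(0)=0$ (for $k\ge1$) only helps. I would therefore simply cite \cite{Shi89} (or \cite[Chapter 7]{CLN06}) for the estimates in the stated scale-invariant form, remarking that the modified flow \eqref{negativeflow} is covered after the homothetic change of variable. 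The only genuine care needed is that the estimate is claimed for \emph{every} $k\ge1$ with the uniform power $t^{-k/2}$, which is exactly the content of the iterated Bernstein argument sketched above.
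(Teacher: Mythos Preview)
Your proposal is correct and essentially matches the paper: both reduce to the classical Bando--Shi/Hamilton derivative estimates (the paper cites \cite[Theorem 7.1]{Ham95}), and the only point at issue is why the modified flow \eqref{negativeflow} is covered. Here the paper takes a slightly cleaner route than your homothety reduction: rather than pulling back to the standard Ricci flow via $\tilde g(t)=e^{-2t}g(\tfrac12(e^{2t}-1))$ --- which, as you note, makes the constants a priori depend on $T$ through the factors $e^{\pm 2T}$ --- the paper simply observes that the evolution inequality for $|R|$ (and hence for $|\nabla^k R|$) under \eqref{negativeflow} has the same form $\partial_t R = \Delta R + R*R$ up to a harmless lower-order term, so the Bernstein/maximum-principle argument goes through verbatim with $T$-independent constants $C(k)$. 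Your direct sketch of the Bernstein induction already does this implicitly, so the homothety detour is unnecessary.
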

\begin{proof}
 This is a well known result for the standard Ricci flow \cite[Theorem 7.1]{Ham95}. The proof also works for the flow \eqref{negativeflow},
 because the evolution inequality of the Riemann tensor needed in the proof is also satisfied under the flow \eqref{negativeflow}.
\end{proof}
\begin{thm}[Dynamical stability]\label{negativemodulostability}
\index{stable!dynamically}\index{modulo diffeomorphism}Let $(M,g_E)$ be an Einstein manifold with constant $-1$. Let $k\geq3$. If $g_E$ is a local maximizer of the Yamabe functional\index{Yamabe!functional}, then for every $C^{k}$-neighbourhood $\mathcal{U}$
of $g_E$, there exists a $C^{k+2}$-neighbourhood $\mathcal{V}$ such that the following holds:

For any metric $g_0\in\mathcal{V}$ there exists a $1$-parameter family\index{1@$1$-parameter family} of diffeomorphisms $\varphi_t$ such that for the Ricci flow $g(t)$ solving \eqref{negativeflow} which starts at $g_0$,
the modified flow $\varphi_t^*g(t)$ stays in $\mathcal{U}$ for all time and converges to an Einstein metric $g_{\infty}$ with constant $-1$ in $\mathcal{U}$ as $t\to\infty$.
The convergence is of polynomial rate, i.e.\ there exist constants $C,\alpha>0$ such that
\begin{align*}\left\|\varphi_t^*g(t)-g_{\infty}\right\|_{C^k}\leq C(t+1)^{-\alpha}.
\end{align*}
\end{thm}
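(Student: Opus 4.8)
The plan is to run the standard strategy for deducing dynamical stability from a Lojasiewicz--Simon inequality, following the scheme of Haslhofer--M\"uller (and the Ricci-flat prototype in \cite{Has12,HM13}), but working with the functional $\mu_+$ and the flow \eqref{negativeflow} in place of $\lambda$ and the unnormalized flow. First I would fix a $C^k$-neighbourhood $\mathcal{U}$ of $g_E$ small enough that Theorem~\ref{yamabemaximality} (local maximality of $\mu_+$, with the rigidity that $\mu_+(g)=\mu_+(g_E)$ forces $g$ Einstein), the Lojasiewicz--Simon inequality Theorem~\ref{nonintegrablegradient}, the uniform bounds of Lemmas~\ref{goodf_g}, \ref{f'}, \ref{f''} and the estimate Proposition~\ref{secondmu} all hold on it. Using Lemma~\ref{shorttimeestimates} I then pick a $C^{k+2}$-neighbourhood $\mathcal{V}$ so that any Ricci flow \eqref{negativeflow} starting in $\mathcal{V}$ stays in $\mathcal{U}$ at least up to time $1$, and is as $C^k$-close to $g_E$ as we like there; by Lemma~\ref{nablacurvature} we also control all higher covariant derivatives of curvature for $t\in[1/2,1]$, which will feed the interpolation estimates later.

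The core is a continuity/bootstrap argument on the maximal time $T$ up to which $g(t)$ stays in $\mathcal{U}$. Along the flow, Lemma~\ref{mufirstvariation} gives
\begin{align*}\frac{d}{dt}\mu_+(g(t))=\int_M|\ric_{g(t)}+g(t)+\nabla^2 f_{g(t)}|^2e^{-f_{g(t)}}\dv_{g(t)}\geq c\,\|\ric_{g(t)}+g(t)+\nabla^2 f_{g(t)}\|_{L^2}^2,\end{align*}
and the flow velocity is $\dot g(t)=-2(\ric_{g(t)}+g(t))$, so up to the diffeomorphism term $\nabla^2 f_{g(t)}$ the $L^2$-norm of the velocity is comparable to the square root of $\frac{d}{dt}\mu_+$. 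Combining this with the Lojasiewicz--Simon inequality \eqref{gradientmu2} in the now-standard way (set $\theta(t)=(\mu_+(g_E)-\mu_+(g(t)))^{1-\sigma}$, differentiate, and use $\sigma\in[1/2,1)$) yields that $-\frac{d}{dt}\theta(t)\geq c'\,\|\dot g(t)\|_{L^2}$ modulo the diffeomorphism correction, hence $\int_{1}^{\infty}\|\dot g(t)\|_{L^2}\,dt<\infty$ with an explicit polynomial tail $(t+1)^{-\alpha}$ coming from the Lojasiewicz exponent. The diffeomorphism term is removed exactly as in \cite{Has12}: one solves for a family $\varphi_t$ (e.g.\ generated by $-\grad f_{g(t)}$, or via a DeTurck-type modification) so that $\varphi_t^*g(t)$ moves in a slice $\delta^{-1}_{g_E}(0)$ transverse to the orbit, absorbing the $\nabla^2 f_{g(t)}$ term; Lemma~\ref{f'} bounds the relevant vector field in terms of $\|\ric_g+g+\nabla^2 f_g\|$, keeping the correction of the same order.

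To upgrade $L^2$-convergence of the velocity into $C^k$-convergence of the metrics one interpolates: the $C^k$-norm is controlled by a low power of an $L^2$-norm and a high power of a $C^{k+m}$-norm, and the latter stays uniformly bounded because, as long as $g(t)\in\mathcal{U}$, curvature is bounded and Lemma~\ref{nablacurvature} (applied on unit time intervals) gives uniform bounds on all $\nabla^j R$; Shi-type estimates then give uniform $C^{k+m}$-bounds on $\varphi_t^*g(t)$. This makes $t\mapsto\varphi_t^*g(t)$ Cauchy in $C^k$, so it converges to some limit metric $g_\infty\in\mathcal{U}$, with the stated rate $\|\varphi_t^*g(t)-g_\infty\|_{C^k}\leq C(t+1)^{-\alpha}$. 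Finally, $\mu_+$ is monotone and bounded, hence $\mu_+(g(t))\to\mu_+(g_\infty)$; but the velocity tends to $0$, so $\ric_{g_\infty}+g_\infty+\nabla^2 f_{g_\infty}=0$, i.e.\ $g_\infty$ is a gradient steady-type soliton for \eqref{negativeflow}, and by the remark after Lemma~\ref{mufirstvariation} (compact, $c=-1\leq 0$) it is Einstein with constant $-1$. A final shrinking of $\mathcal{V}$ ensures the whole modified flow never leaves $\mathcal{U}$, closing the bootstrap.

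The main obstacle I expect is the bookkeeping around the diffeomorphism gauge: one must choose $\varphi_t$ so that the modified flow genuinely lies (to leading order) in the slice $\delta^{-1}_{g_E}(0)$ on which Theorem~\ref{nonintegrablegradient} was proven, while simultaneously showing that the generating vector fields are integrable in $t$ with the same polynomial tail, and that the modification does not destroy the short-time and higher-derivative estimates. Everything else — the Lojasiewicz ODE argument, the interpolation, the identification of the limit — is routine given the results already established in the excerpt.
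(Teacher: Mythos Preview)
Your approach is correct and matches the paper's proof essentially line for line: choose $\mathcal U$ small enough for Theorems~\ref{yamabemaximality} and~\ref{nonintegrablegradient}, use Lemma~\ref{shorttimeestimates} to pick $\mathcal V$, get derivative bounds from Lemma~\ref{nablacurvature}, pull back by the diffeomorphisms generated by $-\grad_{g(t)}f_{g(t)}$, interpolate $\|\dot{\tilde g}\|_{C^k}\leq C\|\dot{\tilde g}\|_{L^2}^{1-\eta}$, and run the Lojasiewicz ODE argument on $|\mu_+(\tilde g(t))-\mu_+(g_E)|^\theta$ with $\theta=1-\sigma(1+\eta)>0$.

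The obstacle you flag is not one: once you pull back by $\varphi_t$ integrating $-\grad f_{g(t)}$, the modified velocity is \emph{exactly} $-2(\ric_{\tilde g}+\tilde g+\nabla^2 f_{\tilde g})$, and since the Lojasiewicz--Simon inequality \eqref{gradientmu2} already holds on all of $\mathcal U$ by diffeomorphism invariance (see the last line of the proof of Theorem~\ref{nonintegrablegradient}), no slice projection is needed anywhere in the stability argument. The paper also identifies the limit slightly differently---it shows $\mu_+(g_\infty)=\mu_+(g_E)$ from the decay estimate and then invokes the rigidity clause of Theorem~\ref{yamabemaximality}---but your soliton route via the remark after Lemma~\ref{mufirstvariation} works equally well.
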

\begin{proof}
We write $\mathcal{B}^k_{\epsilon}$\index{$\mathcal{B}^k_{\epsilon}$, $\epsilon$-ball w.r.t.\ the $C^k_{g_E}$-norm} for the $\epsilon$-ball around $g_E$ with respect to the $C^k_{g_E}$-norm.
Without loss of generality, we may assume that $\mathcal{U}=\mathcal{B}^k_{\epsilon}$
and $\epsilon>0$ is so small that Theorems \ref{yamabemaximality} and \ref{nonintegrablegradient} hold on $\mathcal{U}$.

By Lemma \ref{shorttimeestimates}, we can choose a small neighbourhood $\mathcal{V}$ such that the Ricci flow starting at any metric $g\in\mathcal{V}$ stays in $\mathcal{B}^k_{\epsilon/4}$
up to time $1$. Let $T\geq 1$ be the maximal time such that for any Ricci flow $g(t)$ starting in $\mathcal{V}$,
there exists a family of diffeomorphisms $\varphi_t$ such that the modified flow $\varphi_t^*g(t)$ stays in $\mathcal{U}$. By definition of $T$ and diffeomorphism invariance\index{diffeomorphism!invariance}, we have uniform curvature bounds
\begin{align*}\sup_{p\in M}|R_{g(t)}|_{g(t)}\leq C_1\qquad \forall t\in[0,T).
\end{align*}
By Lemma \ref{nablacurvature}, we have
\begin{align}\label{nablal_R}\sup_{p\in M}|\nabla^l R_{g(t)}|_{g(t)}\leq C(l)\qquad \forall t\in[1,T).
\end{align}
Because $f_{g(t)}$ satisfies the equation $-\Delta f_g-\frac{1}{2}|\nabla f_g|^2+\frac{1}{2}\scal_g -f_g=\mu_+(g)$, we also have
\begin{align}\label{nablal_f}\sup_{p\in M}|\nabla^l f_{g(t)}|_{g(t)}\leq \tilde{C}(l),\qquad \forall t\in[1,T).
\end{align}
Note that all these estimates are diffeomorphism invariant.

We now construct a modified Ricci flow as follows: Let $\varphi_t\in \Diff(M)$, $t\geq 1$ be the family of diffeomorphisms generated by
$X(t)=-\gradient_{g(t)}f_{g(t)}$ and define
\begin{align}\label{modifiedflow}\tilde{g}(t)=\begin{cases}g(t), & t\in[0,1],\\
                                   \varphi_t^*g(t),  & t\geq 1.
\end{cases}
\end{align}
The modified flow satisfies \eqref{negativeflow} for $t\in[0,1]$ while for $t\geq 1$, we have
\begin{align*}\frac{d}{dt}\tilde{g}(t)&=-2(\ric_{\tilde{g}(t)}+\tilde{g}(t)+\nabla^2 f_{\tilde{g}(t)}).
\end{align*}
 Let $T'\in[0,T]$ be the maximal time such that the modified Ricci flow, starting at any metric $g_0\in\mathcal{V}$, stays in $\mathcal{U}$ up to time $T'$.
 Then
\begin{align*}\left\|\tilde{g}(T')-g_E\right\|_{C^k_{g_E}}\leq& \left\|\tilde{g}(1)-g_E\right\|_{C^k_{g_E}}+\int_1^{T'}\left\| \dot{\tilde{g}}(t)\right\|_{C^k_{g_E}}dt
                                                               \leq\frac{\epsilon}{4}+2\int_1^{T'}\left\| \dot{\tilde{g}}(t)\right\|_{C^{k}_{\tilde{g}(t)}}dt,
\end{align*}
provided that $\mathcal{U}$ is small enough.
By the interpolation\index{interpolation} inequality for tensors (see \cite[Corollary 12.7]{Ham82}), (\ref{nablal_R}) and (\ref{nablal_f}), we have 
\begin{align*}\left\| \dot{\tilde{g}}(t)\right\|_{C^k_{\tilde{g}(t)}}\leq C_2\left\| \dot{\tilde{g}}(t)\right\|^{1-\eta}_{L^2_{\tilde{g}(t)}}
\end{align*}
for $\eta$ as small as we want. In particular, we can assume that $\theta:=1-\sigma(1+\eta)>0,$ where $\sigma$ is the exponent appearing in Theorem \ref{nonintegrablegradient}.
By the first variation of $\mu_+$\index{first variation!of $\mu_+$},
\begin{align*}\frac{d}{dt}\mu_+(\tilde{g}(t))\geq C_3\left\| \dot{\tilde{g}}(t)\right\|^{1+\eta}_{L^2_{\tilde{g}(t)}}\left\| \dot{\tilde{g}}(t)\right\|^{1-\eta}_{L^2_{\tilde{g}(t)}}.
\end{align*}
By Theorem \ref{yamabemaximality} and Theorem \ref{nonintegrablegradient} again,
\begin{align*}-\frac{d}{dt}|\mu_+(\tilde{g}(t))-&\mu_+(g_E)|^{\theta}=\theta|\mu_+(\tilde{g}(t))-\mu_+(g_E)|^{\theta-1}\frac{d}{dt}\mu_+(\tilde{g}(t))\\
                                                                    &\geq C_4|\mu_+(\tilde{g}(t))-\mu_+(g_E)|^{-\sigma(1+\eta)}\left\| \dot{\tilde{g}}(t)\right\|^{1+\eta}_{L^2_{\tilde{g}(t)}}
                                                                    \left\| \dot{\tilde{g}}(t)\right\|^{1-\eta}_{L^2_{\tilde{g}(t)}}
                                                       \geq C_5\left\| \dot{\tilde{g}}(t)\right\|_{C^k_{\tilde{g}(t)}}.
\end{align*}
 Hence by integration,
  \begin{align*}
   \int_1^{T'}\left\| \dot{\tilde{g}}(t)\right\|_{C^k_{\tilde{g}(t)}}dt
\leq \frac{1}{C_5}|\mu_+(\tilde{g}(1))-\mu_+(g_E)|^{\theta}\leq \frac{1}{C_5}|\mu_+(\tilde{g}(0))-\mu_+(g_E)|^{\theta}\leq\frac{\epsilon}{8},
  \end{align*}
 provided that $\mathcal{V}$ is small enough. This shows that $\left\|\tilde{g}(T')-g_E\right\|_{C^k_{g_E}}\leq \epsilon/2<\epsilon$, so $T'$ cannot be finite.
 Thus, $T=\infty$ and $\tilde{g}(t)$ converges to some limit metric $g_{\infty}\in\mathcal{U}$ as $t\to\infty$. By the Lojasiewicz-Simon inequality\index{Lojasiewicz-Simon inequality}, we have
\begin{align*}\frac{d}{dt}|\mu_+(\tilde{g}(t))-\mu_+(g_E)|^{1-2\sigma}\geq C_6,
\end{align*}
which implies
\begin{align*}|\mu_+(\tilde{g}(t))-\mu_+(g_E)|\leq C_7(t+1)^{-\frac{1}{2\sigma-1}}.
\end{align*}
Here, we may assume that $\sigma>\frac{1}{2}$ because the Lojasiewicz-Simon inequality\index{Lojasiewicz-Simon inequality} also holds after enlarging the exponent.
Therefore, $\mu_+(g_{\infty})=\mu_+(g_E)$, so $g_{\infty}$ is an Einstein metric with constant $-1$. The convergence is of polynomial rate since for $t_1<t_2$,
\begin{align*}\left\|\tilde{g}(t_1)-\tilde{g}(t_2)\right\|_{C^k}\leq C_8|\mu_+(\tilde{g}(t_1))-\mu_+(g_E)|^{\theta}\leq C_9(t_1+1)^{-\frac{\theta}{2\sigma-1}},
\end{align*}
and the assertion follows from $t_2\to\infty$.
\end{proof}
\begin{thm}[Dynamical instability]\label{negativemoduloinstability}
\index{unstable!dynamically}\index{modulo diffeomorphism}
Let $(M,g_E)$ be an Einstein manifold with constant $-1$ which is not a local maximizer of the Yamabe functional\index{Yamabe!functional}. Then there exists a nontrivial ancient\index{ancient} Ricci flow $g(t)$ solving \eqref{negativeflow}, defined on
 $(-\infty,0]$, and a $1$-parameter family of diffeomorphisms $\varphi_t$, $t\in (-\infty,0]$ such that $\varphi_t^*g(t)\to g_E$ as $t\to-\infty$.
\end{thm}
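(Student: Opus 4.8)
The plan is to construct the ancient flow as a limit of a sequence of ``backward shooting'' solutions, in the spirit of Haslhofer--Müller's proof of Theorem \ref{ricciflatinstability}. Since $g_E$ is not a local maximizer of the Yamabe functional, by Theorem \ref{yamabemaximality} it is not a local maximizer of $\mu_+$ either, so in every $C^{2,\alpha}$-neighbourhood of $g_E$ there exist metrics $g$ with $\mu_+(g) > \mu_+(g_E)$. Work on the slice $\mathcal{S}_{g_E}$ from Theorem \ref{nonintegrablegradient}, so that the (gauge-fixed) flow $\dot{\tilde g}(t) = -2(\ric_{\tilde g(t)} + \tilde g(t) + \nabla^2 f_{\tilde g(t)})$ is the negative $L^2$-gradient flow of $\mu_+$ and $g_E$ is a critical point at which $\mu_+$ is not a local max.

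First I would pick a sequence of initial metrics $g_i \to g_E$ in $C^{2,\alpha}$ with $\mu_+(g_i) > \mu_+(g_E)$, and run the modified Ricci flow \emph{forward} from each $g_i$; call the solution $\tilde g_i(t)$ and let $t_i > 0$ be the first time it hits $\partial \mathcal{B}^{2,\alpha}_{\epsilon}$. Because $\mu_+$ is strictly increasing along the flow, monotone, bounded above near $g_E$, and by the Lojasiewicz--Simon inequality \eqref{gradientmu2} the flow travels only finite $L^2$-length (hence, via the interpolation/smoothing estimates as in Theorem \ref{negativemodulostability}, finite $C^k$-length) before $\mu_+$ could converge, the exit time $t_i$ must be finite and, crucially, $t_i \to \infty$ as $i \to \infty$: a flow starting closer and closer to the critical level $\mu_+(g_E)$ takes longer and longer to escape the $\epsilon$-ball, by the same Lojasiewicz length bound $\int_0^{t_i} \|\dot{\tilde g}\|_{C^k} \, dt \leq C\,|\mu_+(\tilde g_i(0)) - \mu_+(g_E)|^{\theta}$ combined with the fact that escaping requires total displacement $\geq \epsilon/2$. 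Now time-translate: set $h_i(t) = \tilde g_i(t + t_i)$ for $t \in [-t_i, 0]$, so each $h_i$ is a solution on $[-t_i,0]$ with $h_i(0) \in \partial\mathcal{B}^{2,\alpha}_{\epsilon}$ and $h_i(t) \in \overline{\mathcal{B}^{2,\alpha}_{\epsilon}}$ for all $t \leq 0$.

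Next I would extract a limit. On $\overline{\mathcal{B}^{2,\alpha}_{\epsilon}}$ one has uniform two-sided curvature bounds, hence (Lemma \ref{nablacurvature} and its backward analogue, or Shi-type estimates applied on fixed-length subintervals) uniform $C^\infty_{loc}$ bounds on $h_i$ on compact subsets of $(-\infty,0]$; by Arzelà--Ascoli and a diagonal argument, a subsequence converges in $C^\infty_{loc}$ to an ancient solution $h_\infty(t)$, $t \in (-\infty,0]$, of the modified flow, with $h_\infty(t) \in \overline{\mathcal{B}^{2,\alpha}_{\epsilon}}$ and $\mu_+(h_\infty(t)) \leq \mu_+(g_E)$ for all $t$ (since along each $h_i$, $\mu_+$ is increasing toward a value $\leq \mu_+(\tilde g_i(t_i))$, and these values tend to $\mu_+(g_E)$ as the exit points approach... wait, more carefully: $\mu_+(h_i(t)) \leq \mu_+(h_i(0)) \leq \sup_{\partial \mathcal{B}}\mu_+$, and one shows the limit satisfies $\mu_+(h_\infty(t)) \le \mu_+(g_E)$ using that $\mu_+(h_i(-t_i)) = \mu_+(g_i) \to \mu_+(g_E)$ and monotonicity). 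Then $\mu_+(h_\infty(t))$ is monotone and bounded, so it has a limit $L \leq \mu_+(g_E)$ as $t \to -\infty$; the integrated gradient estimate forces $\|\dot h_\infty(t)\|_{L^2} \to 0$, and by the Lojasiewicz--Simon argument run backward in time (exactly as in the forward convergence proof of Theorem \ref{negativemodulostability}, but on $(-\infty,0]$) the flow $h_\infty(t)$ converges in $C^k$ as $t \to -\infty$ to a critical point $g_\infty$ of $\mu_+$ with $\mu_+(g_\infty) = L$. Since $g_E$ is not a local maximizer, the only critical point with critical value $\leq \mu_+(g_E)$ inside $\mathcal{B}^{2,\alpha}_{\epsilon}$... here one needs that $g_\infty = g_E$; I would argue this by taking $\epsilon$ small enough that $g_E$ is the \emph{only} Einstein metric (mod diffeo) of constant $-1$ in the ball with $\mu_+ \leq \mu_+(g_E)$ — this uses the rigidity statement in Theorem \ref{yamabemaximality} (any metric with $\mu_+(g) = \mu_+(g_E)$ is Einstein with constant $-1$) together with the fact that $\mu_+(g_\infty) \le \mu_+(g_E)$ and local uniqueness of the Einstein metric on the slice — so $g_\infty = g_E$. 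Finally, undoing the gauge fixing gives an ancient solution $g(t)$ of the genuine flow \eqref{negativeflow} and diffeomorphisms $\varphi_t$ with $\varphi_t^* g(t) = h_\infty(t) \to g_E$; nontriviality follows since $h_\infty(0) \in \partial \mathcal{B}^{2,\alpha}_{\epsilon} \neq \{g_E\}$.

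The main obstacle, and the step requiring the most care, is establishing that the exit times satisfy $t_i \to \infty$ together with the compactness needed to pass to an ancient limit that still converges to $g_E$ backward in time. The delicate point is that a priori the limit $h_\infty$ could converge as $t\to-\infty$ to some \emph{other} critical point of $\mu_+$ on the slice (or have its $\mu_+$-value limit strictly below $\mu_+(g_E)$, making the Lojasiewicz argument at $g_E$ inapplicable); ruling this out requires choosing the neighbourhood small enough — using Theorem \ref{yamabemaximality}'s rigidity and the ILH slice structure of the space of metrics near $g_E$ — so that $g_E$ is the unique critical point of $\mu_+$ in the ball at its own level, and simultaneously controlling that the backward-traveled Lojasiewicz length of $h_i$ on $[-t_i,0]$ is bounded below by a fixed amount (forcing $t_i\to\infty$) yet the limit length is still finite. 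All the analytic inputs — analyticity of $\mu_+$ (Lemma \ref{analyticmu}), the Lojasiewicz--Simon inequality (Theorem \ref{nonintegrablegradient}), the smoothing/interpolation estimates (Lemmas \ref{nablacurvature}, \ref{shorttimeestimates}), and the $H^2$-Lipschitz bound \eqref{H^2estimateformu} — are already in place, so the argument is a direct adaptation of \cite[Theorem 1.2, proof]{HM13} to the modified flow and the functional $\mu_+$.
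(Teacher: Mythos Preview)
Your overall strategy—forward shooting from a sequence $g_i\to g_E$ with $\mu_+(g_i)>\mu_+(g_E)$, showing the exit times $t_i\to\infty$, time-shifting, and extracting a limiting ancient solution by compactness—matches the paper's proof. The essential gap is in the step where you identify the backward limit of $h_\infty(t)$ as $g_E$.

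You argue: (i) $\mu_+(h_\infty(t))$ is monotone and bounded, hence has a limit $L$; (ii) by Lojasiewicz--Simon the flow converges backward to a critical point $g_\infty$ with $\mu_+(g_\infty)=L$; (iii) by the rigidity clause in Theorem \ref{yamabemaximality}, $g_\infty=g_E$. There are two problems. First, the inequality $\mu_+(h_\infty(t))\le \mu_+(g_E)$ is the wrong direction: monotonicity along $h_i$ gives $\mu_+(h_i(t))\ge \mu_+(g_i)$, so in the limit $\mu_+(h_\infty(t))\ge \mu_+(g_E)$ and hence $L\ge \mu_+(g_E)$. Second, and more seriously, the rigidity statement in Theorem \ref{yamabemaximality} (``any metric with $\mu_+(g)=\mu_+(g_E)$ is Einstein with constant $-1$'') is asserted \emph{only under the hypothesis that $g_E$ is a local maximizer of the Yamabe functional}—precisely the hypothesis that fails here. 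So you have no tool to rule out that $g_\infty$ is some other critical point, and the Lojasiewicz--Simon inequality you have is centred at $g_E$, not at an unknown $g_\infty$.

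The paper avoids this classification entirely. Instead of identifying a limit critical point, it uses the Lojasiewicz--Simon inequality \emph{at $g_E$} quantitatively on the prelimit flows: from the differential inequality $\frac{d}{dt}(\mu_+(\tilde g_i(t))-\mu_+(g_E))^{1-2\sigma}\ge -C_1$ one gets an explicit decay of $\mu_+(\tilde g_i^s(t))-\mu_+(g_E)$ in $t$, and then the length bound gives
\[
\|\tilde g_i^s(T_i)-\tilde g_i^s(t)\|_{C^{k-3}}\le [-C_5 t + C_6]^{-\theta/(2\sigma-1)}.
\]
Since $\tilde g_i^s(T_i)\to g_E$, passing to the limit yields $\|g_E-\tilde g(t)\|_{C^{k-3}}\to 0$ directly. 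Nontriviality is handled the same way: the length estimate gives $\epsilon\le \|\bar g_i-g_E\|+C_4(\mu_+(\tilde g_i(t_i))-\mu_+(g_E))^\theta$, and taking $i\to\infty$ forces $\mu_+(\tilde g(0))>\mu_+(g_E)$. Your argument $h_\infty(0)\in\partial\mathcal B_\epsilon$ is not sufficient either, since the compactness step only yields convergence in a weaker norm than the one defining the ball.
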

\begin{proof}Since $(M,g_E)$ is not a local maximum of the Yamabe functional, it cannot be a local maximum of $\mu_+$. Let $g_i\to g_E$ in $C^k$ and suppose that
we have $\mu_+(g_i)>\mu_+(g_E)$ for all $i$.
Let $\tilde{g}_i(t)$ be the modified flow defined in \eqref{modifiedflow}, starting at $g_i$.
Then by Lemma \ref{shorttimeestimates}, $\bar{g}_i=g_i(1)$ converges to $g_E$ in $C^{k-2}$ and by monotonicity\index{monotonicity}, $\mu_+(\bar{g}_i)>\mu_+(g_E)$ as well.
Let $\epsilon>0$ be so small that Theorem \ref{nonintegrablegradient} holds on $\mathcal{B}^{k-2}_{2\epsilon}$.
Theorem \ref{nonintegrablegradient} yields the differential inequality\index{differential inequality}
\begin{align*}\frac{d}{dt}(\mu_+(\tilde{g}_i(t))-\mu_+(g_E))^{1-2\sigma}\leq -C_1,
\end{align*}
from which we obtain
\begin{align*} \mu_+(\tilde{g}_i(t))\geq[(\mu_+(\tilde{g}_i(1))-\mu_+(g_E))^{1-2\sigma}-C_1(t-1)]^{-\frac{1}{2\sigma-1}}+\mu_+(g_E) 
\end{align*}
 as long as $\tilde{g}_i(t)$ stays in $\mathcal{B}^{k-2}_{2\epsilon}$.
Thus, there exists a $t_i$ such that
\begin{align*}\left\|\tilde{g}_i(t_i)-g_E\right\|_{C^{k-2}}=\epsilon,
\end{align*}
and $t_i\to \infty$. If $\left\{t_i\right\}$ was bounded, $\tilde{g}_i(t_i)\to g_E$ in $C^{k-2}$. By interpolation\index{interpolation},
\begin{align*}\left\|\ric_{\tilde{g}_i(t)}+\nabla^2f_{\tilde{g}_i(t)}+\tilde{g}_i(t)\right\|_{C^{k-2}}\leq C_2 \left\|\ric_{\tilde{g}_i(t)}+\nabla^2f_{\tilde{g}_i(t)}+\tilde{g}_i(t)\right\|_{L^2}^{1-\eta}
\end{align*}
for $\eta>0$ as small as we want. We may assume that $\theta=1-\sigma(1+\eta)>0$.
By Theorem \ref{nonintegrablegradient} , we have the differential inequality\index{differential inequality}
\begin{align*}\frac{d}{dt}(\mu_+(\tilde{g}_i(t))&-\mu_+(g_E))^{\theta}\geq C_3\left\|\ric_{\tilde{g}_i(t)}+\tilde{g}_i(t)\right\|_{L^2}^{1-\eta},
\end{align*}
if $\mu_+(\tilde{g}_i(t))>\mu_+(g_E)$. Thus,
\begin{align}\label{nontrivial3}\epsilon=\left\|\tilde{g}_i(t_i)-g_E\right\|_{C^{k-2}}\leq\left\|\bar{g}_i-g_E\right\|_{C^{k-2}}+C_4(\mu_+(\tilde{g}_i(t_i))-\mu_+(g_E))^{\theta}.
\end{align}
Now put $\tilde{g}_i^s(t):=\tilde{g}_i(t+t_i)$, $t\in[T_i,0]$, where $T_i=1-t_i\to-\infty$. We have
\begin{align*}\left\|\tilde{g}^s_i(t)-g_E\right\|_{C^{k-2}}&\leq\epsilon\qquad\forall t\in[T_i,0],\\
                \tilde{g}_i^s(T_i)&\to g_E\quad \text{ in }C^{k-2}.
\end{align*}
Because the embedding $C^{k-3}(M)\subset C^{k-2}(M)$ is compact\index{compact embedding}, we can choose a subsequence of the $\tilde{g}_i^s$, converging in $C^{k-3}_{loc}(M\times (-\infty,0])$ to an \index{ancient}ancient flow $\tilde{g}(t)$, $t\in (-\infty,0]$,
satisfying the differential equation\index{differential equation}
\begin{align*} \dot{\tilde{g}}(t)=-2(\ric_{\tilde{g}(t)}+\tilde{g}(t)+\nabla^2 f_{\tilde{g}(t)}).
\end{align*}
Let $\varphi_t$, $t\in(-\infty,0]$ be the diffeomorphisms generated by $X(t)=\grad_{\tilde{g}(t)}f_{\tilde{g}}$ where $\varphi_0=\mathrm{id}$. Then $g(t)=\varphi_t^*\tilde{g}(t)$
is a solution of \eqref{negativeflow}.
From taking the limit $i\to\infty$ in \eqref{nontrivial3}, we obtain $\epsilon\leq C_4(\mu_+(g(0))-\mu_+(g_E))^{\beta/2}$ and therefore, the Ricci flow is nontrivial.
For $T_i\leq t$, we have, by the Lojasiewicz-Simon\index{Lojasiewicz-Simon inequality} inequality,
\begin{align*}\left\|\tilde{g}^s_i(T_i)-\tilde{g}_i^s(t)\right\|_{C^{k-3}}\leq& C_4(\mu_+(\tilde{g}_i(t+t_i))-\mu_+(g_E))^{\theta}\\
                                                           \leq&C_4[-C_1t+ (\mu_+(\tilde{g}_i(t_i))-\mu_+(g_E))^{1-2\sigma}]^{-\frac{\theta}{2\sigma-1}}
                                                           \leq[-C_5t+C_6]^{-\frac{\theta}{2\sigma-1}}.
\end{align*}
Thus,
\begin{align*}\left\|g_E-\tilde{g}(t)\right\|_{C^{k-3}}\leq &\left\|g_E-\tilde{g}^s_i(T_i)\right\|_{C^{k-3}}+[-C_5t+C_6]^{-\frac{\theta}{2\sigma-1}}
      +\left\|\tilde{g}_i^s(t)-\tilde{g}(t)\right\|_{C^{k-3}}.
\end{align*}
It follows that $\left\|g_E-\tilde{g}(t)\right\|_{C^{k-3}}\to0$ as $t\to-\infty$. Therefore, $(\varphi_t^{-1})^*g(t)\to g_E$ in $C^{k-3}$ as $t\to-\infty$.
\end{proof}
\begin{rem}
Dynamical stability and instability under the volume-normalized Ricci flow follow from these theorems by projecting the flows above to the space of metrics of fixed volume and by a suitable rescaling of the time parameter. In this way, we obtain the results as stated in the introduction.
\end{rem}

\section{Einstein metrics with positive scalar curvature}\label{positivescalarcurvature}
In this section, we state analoguous stability/instability results for Einstein metrics with positive scalar curvature. Since the methods are very similar, we skip the details and we just explain the key steps and the main differences.
For details, we refer the reader to \cite[Chapter 6]{Kro13}.
We define the Ricci shrinker entropy\index{shrinker entropy} which was first introduced by G.\ Perelman in \cite{Per02}. Let \index{$\mathcal{W}_-(g,f,\tau)$}
\begin{align*}\mathcal{W}_-(g,f,\tau)=\frac{1}{(4\pi\tau)^{n/2}}\int_M [\tau(|\nabla f|^2_g+\scal_g)+f-n]e^{-f}\dv.
\end{align*}
For $\tau>0$, let\index{$\mu_-(g,\tau)$}
 \begin{align*}\mu_-(g,\tau)&=\inf \left\{\mathcal{W}_-(g,f,\tau)\suchthat{f\in C^{\infty}(M),\frac{1}{(4\pi\tau)^{n/2}}\int_M e^{-f}\dv_g=1}f\in C^{\infty}(M),\frac{1}{(4\pi\tau)^{n/2}}\int_M e^{-f}\dv_g=1\right\}.
 \end{align*}
For any $\tau>0$, the infimum is realized by a smooth function.
We define the shrinker entropy\index{shrinker entropy} as
\begin{align*}\nu_-(g)&=\inf \left\{\mu_-(g,\tau)\mid\tau>0\right\}.
\end{align*}
Observe that $\nu_-$ is scale and diffeomorphism invariant.
\index{nu@$\nu_-$-functional}\index{$\nu_-(g)$, shrinker entropy}If $\lambda(g)>0$\index{Lambda@$\lambda$-functional},
 then $\nu_-(g)$ is finite and realized by some $\tau_g>0$ (see \cite[Corollary 6.34]{CC07}).
In this case, a pair $(f_g,\tau_g)$ realizing $\nu_-(g)$ satisfies the equations\index{$f_g$, minimizer realizing $\nu_-(g)$}\index{$\tau_g$, minimizer realizing $\nu_-(g)$}
\begin{align}\label{nueulerlagrange}\tau(2\Delta f+|\nabla f|^2-\scal)-f+n+\nu_-&=0,\\
\label{nueulerlagrange1.5}\frac{1}{(4\pi\tau)^{n/2}}\int_M f e^{-f}\dv&=\frac{n}{2}+\nu_-,
\end{align}
see e.g. \cite[p.\ 5]{CZ12}. The first variation of $\nu_-$ is
\begin{align}\nu_-(g)'(h)=-\frac{1}{(4\pi\tau_g)^{n/2}}\int_M\left\langle \tau_g(\ric+\nabla^2 f_g)-\frac{1}{2}g,h\right\rangle e^{-f_g}\dv_g,
\end{align}
where $(f_g,\tau_g)$ realizes $\nu_-(g)$. Because of diffeomorphism invariance, $\nu_-$ is nondecreasing under the $\tau$-flow
\begin{align}\label{tauricciflow}\dot{g}(t)=-2\ric_{g(t)}+\frac{1}{\tau_{g(t)}}g(t).
\end{align}
\begin{rem}
The critical metrics of $\nu_-$ are precisely the shrinking gradient Ricci solitons. These are the metrics satisfying $\ric+\nabla^2 f=c g$ for some $f\in C^{\infty}(M)$ and $c>0$.
This includes all positive Einstein metrics. If $g_E$ is a positive Einstein metric with Einstein constant $\mu$, the pair $(f_{g_E},\tau_{g_E})$ satisfies
\begin{align}\label{coupledeulerlagrange}\tau_{g_E}=\frac{1}{2\mu},\qquad f_{g_E}=\log(\volume(M,g_E))-\frac{n}{2}(\log(2\pi)-\log(\mu)).
\end{align}
\end{rem}
\begin{prop}[Second variation of $\nu_-$]\label{nuhessian}\index{second variation!of $\nu_-$}The second variation of $\nu_-$ on a postive Einstein metric $(M,g_E)$ with constant $\mu$ is given by
$$\nu_-(g_E)''(h)=
\begin{cases} -\frac{1}{4\mu}\fint_M\langle \Delta_{E} h,h\rangle \dv,& \text{ if }\delta h=0\text{ and }\int_M\trace h\dv=0 ,\\
              0,& \text{ if }h\in \R\cdot g_E\oplus\delta^{*}(\Omega^1(M)).
\end{cases}$$
\end{prop}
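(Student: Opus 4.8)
The plan is to run the computation of Proposition~\ref{hessianmu} in the positive case, the only new ingredient being the auxiliary scale parameter $\tau$. The starting point is the $\nu_-$-analogue of Lemma~\ref{f_{g_E}}: at a positive Einstein metric $\ric_{g_E}=\mu g_E$ the minimizing pair $(f_{g_E},\tau_{g_E})$ is given by \eqref{coupledeulerlagrange}, so $f_{g_E}$ is \emph{constant}, $\nabla^2 f_{g_E}=0$, $\tau_{g_E}=\tfrac{1}{2\mu}$, and the gradient density
\begin{align*}
A_{g_E}:=\tau_{g_E}(\ric_{g_E}+\nabla^2 f_{g_E})-\tfrac12 g_E=\mu\tau_{g_E}\,g_E-\tfrac12 g_E=0,
\end{align*}
which is just the statement that $g_E$ is critical for $\nu_-$. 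One also needs that $g\mapsto(f_g,\tau_g)$ depends smoothly on $g$ near $g_E$, so that the first variations $f'=\tfrac{d}{dt}\big|_0 f_{g_E+th}$ and $\tau'=\tfrac{d}{dt}\big|_0\tau_{g_E+th}$ exist; it will turn out that their explicit values are irrelevant for the final formula.

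For $h\in\R g_E\oplus\delta^{*}(\Omega^1(M))$ the Hessian vanishes. Write $h=c\,g_E+\mathcal L_X g_E$ and let $\gamma(s)=\varphi_s^{*}\bigl((1+sc)g_E\bigr)$, where $\varphi_s$ is the flow of $X$; by scale and diffeomorphism invariance of $\nu_-$ the function $s\mapsto\nu_-(\gamma(s))$ is constant, and since $\nu_-'(g_E)=0$ its second derivative at $s=0$ equals $\nu_-(g_E)''(h)$. This settles the second case.

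For the first case, let $\delta h=0$ and $\int_M\trace h\dv=0$, and put $g_t=g_E+th$. Differentiating the first variation $\nu_-'(g)(h)=-(4\pi\tau_g)^{-n/2}\int_M\langle A_g,h\rangle e^{-f_g}\dv_g$ once more at $t=0$, every term in which the derivative does not fall on $A_{g_t}$ carries the factor $\langle A_{g_E},h\rangle=0$; together with the fact that $e^{-f_{g_E}}$ and $\tau_{g_E}$ are constant and $(4\pi\tau_{g_E})^{-n/2}e^{-f_{g_E}}=\volume(M,g_E)^{-1}$ (a direct computation from \eqref{coupledeulerlagrange}), this leaves
\begin{align*}
\nu_-(g_E)''(h)=-\fint_M\bigl\langle\tfrac{d}{dt}\big|_{0}A_{g_t},\,h\bigr\rangle\dv.
\end{align*}
Now $\tfrac{d}{dt}\big|_{0}A_{g_t}=\tau'\mu g_E+\tfrac{1}{2\mu}\bigl(\ric'+\nabla^2 f'\bigr)-\tfrac12 h$, and by the first variation of the Ricci tensor with $\delta h=0$ (as used in Lemma~\ref{f_{g_E}}(iii)) together with $\nabla^2 f_{g_E}=0$ this equals $\tau'\mu g_E+\tfrac{1}{4\mu}\Delta_L h+\nabla^2\psi-\tfrac12 h$ for some function $\psi$ assembled from $f'$ and $\trace h$. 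Pairing with $h$ and averaging, $\fint_M\langle\tau'\mu g_E,h\rangle\dv=\tau'\mu\fint_M\trace h\dv=0$ by the trace hypothesis, and $\fint_M\langle\nabla^2\psi,h\rangle\dv=0$ by integration by parts since $\delta h=0$; therefore
\begin{align*}
\nu_-(g_E)''(h)=-\frac{1}{4\mu}\fint_M\langle\Delta_L h,h\rangle\dv+\frac12\fint_M|h|^2\dv=-\frac{1}{4\mu}\fint_M\langle\Delta_E h,h\rangle\dv,
\end{align*}
the last equality using $\Delta_L=\Delta_E+\ric\circ(\cdot)+(\cdot)\circ\ric=\Delta_E+2\mu$ on $S^2M$ at an Einstein metric. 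Orthogonality of the decomposition with respect to $\nu_-''$ then follows exactly as in Proposition~\ref{hessianmu}, since $\Delta_E$ preserves $\delta^{-1}(0)$ and commutes with the $g_E$-trace.

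The main obstacle is not this algebra but the analytic input that the minimizing pair $(f_g,\tau_g)$ --- the solution of the coupled system \eqref{nueulerlagrange}--\eqref{nueulerlagrange1.5} --- depends smoothly on $g$ near $g_E$, the $\nu_-$-counterpart of the smooth dependence of $f_g$ exploited throughout Section~\ref{technicalestimatesI}; this is where the coupling with $\tau$ must be controlled. Once that is in hand, the convenient feature is that both the $\tau'$-term and the $f'$-term drop out --- by $\int_M\trace h\dv=0$ and $\delta h=0$ respectively --- so that the remaining work is no harder than in the negative case.
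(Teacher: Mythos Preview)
Your proof is correct and in fact more self-contained than the paper's. The paper does not carry out any computation here: it simply observes that the stated expression is a rewriting of the second variation formula in \cite[Theorem~2.1]{CHI04}, invokes scale and diffeomorphism invariance for the vanishing on $\R\cdot g_E\oplus\delta^{*}(\Omega^1(M))$, and then checks that $\Delta_E$ preserves the subspace $\{\delta h=0,\ \int_M\trace h\,dV=0\}$ via the commutation relations $\trace(\Delta_L h)=\Delta(\trace h)$ and $\delta(\Delta_L h)=\Delta_H(\delta h)$, concluding that the two pieces are $\nu_-''$-orthogonal and span $\Gamma(S^2M)$.

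Your route is instead a direct computation paralleling Proposition~\ref{hessianmu}: you differentiate the first variation and use the vanishing of the gradient $A_{g_E}$ to isolate the $\tfrac{d}{dt}|_0A_{g_t}$-term, then kill the $\tau'$- and $\nabla^2\psi$-contributions via the hypotheses $\int_M\trace h\,dV=0$ and $\delta h=0$. This is a genuine alternative: the paper outsources the formula to \cite{CHI04}, whereas you rederive it from the first variation. The gain is transparency --- one sees exactly why the extra scale parameter $\tau$ does not complicate the answer --- at the cost of needing the smooth dependence of $(f_g,\tau_g)$ on $g$, which the paper establishes only in the paragraph \emph{following} the proposition. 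One cosmetic slip: the coefficient on your $\nabla^2\psi$-term should carry a factor $\tfrac{1}{2\mu}$, but since this term integrates to zero against $h$ by $\delta h=0$ the final formula is unaffected.
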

\begin{proof}This is a simpler expression of the second variational formula in \cite[Theorem 2.1]{CHI04}.
By scale and diffeomorphism invariance\index{scale-invariance}\index{diffeomorphism!invariance}, $\nu_-(g_E)''$ vanishes on the subspace $\R\cdot g_E\oplus\delta^{*}(\Omega^1(M))$.
If $\delta h=0$ and $\int_M\trace h\dv=0$, the formula follows from the one in \cite{CHI04}.
Since $\Delta_E=\Delta_L-2\mu$, $\trace (\Delta_L h)=\Delta (\trace h)$ and $\delta(\Delta_L h)=\Delta_H(\delta h)$ \cite[pp. 28-29]{Lic61},
$\Delta_E$ preserves the above subspace. Here, $\Delta_H$ is the Hodge-Laplacian acting on one-forms. Thus, the splitting of above is orthogonal with respect to $\nu_-(g_E)''$.
Observe also that these two subspaces span the whole space $\Gamma(S^2M)$.
\end{proof}
Now, one has to check that on a small neighbourhood $\mathcal{U}$ of an Einstein metric, $f_g$ and $\tau_g$ are unique. Moreover, if $\mathcal{U}$ is small enough,
$\nu_-(g),f_g$ and $\tau_g$ depend analytically on the metric. These facts follows from
the the implicit function theorem for Banach manifolds and a bootstrap argument using elliptic regularity and the Euler-Lagrange equations \eqref{nueulerlagrange} and \eqref{nueulerlagrange1.5}.
Similar arguments were used in Lemma \ref{goodf_g} and Lemma \ref{analyticmu}.

Furthermore, one has to prove bounds for $\nu_-(g)$, $f_g$, $\tau_g$ and their derivatives as in Section \ref{technicalestimatesI}. These follow essentially from differentiating \eqref{nueulerlagrange} and \eqref{nueulerlagrange1.5} and using elliptic regularity.
Having developed these technical tools, one is able to prove
\begin{thm}\label{yamabemaximalityII}Let $(M,g_E)$ be a positive Einstein manifold with constant $\mu$ and let $\lambda$ be the smallest nonzero eigenvalue of the Laplacian
If $g_E$ is a local maximum of $\nu_-$, it is a local maximum of the Yamabe functional\index{Yamabe!functional} and we have $\lambda\geq 2\mu$. Conversely,
if $g_E$ is a local maximum of the Yamabe functional\index{Yamabe!functional} and $\lambda>2\mu$, then $g_E$ is a local maximum of $\nu_-$. In this case, any other local maximum is also an Einstein metric.
\end{thm}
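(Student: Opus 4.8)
The plan is to reduce the statement to the second-variation formula of Proposition \ref{nuhessian} together with a Koiso-type local slice decomposition analogous to the one used in Theorem \ref{yamabemaximality}, and then to handle the three mutually orthogonal pieces of $\Gamma(S^2M)$ separately. Recall that by scale and diffeomorphism invariance of $\nu_-$, and by the orthogonality asserted in Proposition \ref{nuhessian}, the space of symmetric $2$-tensors near $g_E$ splits as $\R\cdot g_E\oplus\delta^*(\Omega^1(M))\oplus\{h\mid \delta h=0,\ \int_M\trace h\dv=0\}$, with $\nu_-''$ vanishing on the first two summands and equal to $-\tfrac{1}{4\mu}\fint_M\langle\Delta_E h,h\rangle\dv$ on the third. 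Hence $\nu_-''\leq 0$ everywhere is equivalent to $\Delta_E\geq 0$ on transverse traceless tensors, which (by the remarks in the introduction, via \cite{BWZ04} and \cite{Boe05,Bes08}) is exactly the infinitesimal condition for $g_E$ to be a local maximum of the Yamabe functional. The eigenvalue condition $\lambda\geq 2\mu$ should come from looking more carefully at the transverse-but-not-trace-free directions, or rather at the interaction between the conformal direction and $\tau$: on $h=v\,g_E$ with $\int_M v\dv=0$, the coupling between the $\tau$-variation and the Laplace spectrum of $v$ must be analyzed, and one finds that negativity of $\nu_-''$ in such a direction is governed by the sign of $\lambda-2\mu$.

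First I would establish the ``only if'' direction: assume $g_E$ is a local maximum of $\nu_-$. Then $\nu_-''\leq 0$ on all of $\Gamma(S^2M)$. Restricting to transverse traceless $h$ gives $\Delta_E\geq 0$ there, hence $g_E$ is a local maximum of the Yamabe functional along $\mathcal{C}_c$ by the argument already used in the proof of Theorem \ref{yamabemaximality} (critical point of total scalar curvature on constant-scalar-curvature, fixed-volume metrics, combined with \cite[Proposition 4.47]{Bes08} and \cite[Theorem C]{BWZ04}). For the inequality $\lambda\geq 2\mu$: plug $h=v\,g_E$ with $\Delta v=\lambda v$ into $\nu_-''$. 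Using \eqref{coupledeulerlagrange} for $(f_{g_E},\tau_{g_E})$ and differentiating the Euler--Lagrange equations \eqref{nueulerlagrange}, \eqref{nueulerlagrange1.5} to compute $f'$ and $\tau'$ in this direction, one obtains (after integration by parts, exactly as in the conformal computation inside the proof of Theorem \ref{yamabemaximality}) an expression of the shape $\nu_-''(v g_E)=c_n\,(\text{something involving }\lambda)\fint_M v^2\dv$ whose nonpositivity forces $\lambda\geq 2\mu$. This is the step I expect to be the main obstacle: the $\tau$-variation genuinely couples the conformal factor to the spectrum, and one must keep careful track of the constraint \eqref{nueulerlagrange1.5} and of the $\tau_g$-dependence when differentiating, so that the threshold $2\mu$ emerges cleanly rather than $2\mu$ times a spurious constant.

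For the converse, assume $g_E$ is a local maximum of the Yamabe functional and $\lambda>2\mu$. I would follow the architecture of the proof of Theorem \ref{yamabemaximality} essentially verbatim, replacing $\mu_+$ by $\nu_-$: use Koiso's local ILH-diffeomorphism $\Phi(v,g)=v\cdot g$ from $C^\infty(M)\times\mathcal{C}_c$ to $\mathcal{M}$ near $(1,g_E)$ (valid since the relevant eigenvalue is not in the spectrum, which is where the strict inequality $\lambda>2\mu$, rather than $\geq$, is needed), to split an arbitrary nearby metric as $g=(1+v)\bar g$ with $\bar g\in\mathcal{C}$ and $\int_M v\dv_{\bar g}=0$. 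On $\mathcal{C}$, evaluate $\nu_-$ explicitly: since $f_{\bar g}$ and $\tau_{\bar g}$ can be computed from constancy of the scalar curvature, $\nu_-(\bar g)$ becomes an explicit function of $\scal_{\bar g}$ and $\volume(M,\bar g)$, maximized at $g_E$ because $g_E$ maximizes the Yamabe functional. Then on the conformal directions, the second-variation estimate $\tfrac{d^2}{dt^2}|_{t=0}\nu_-((1+tv)\bar g)\leq -C\|v\|_{H^1}^2$ holds uniformly near $g_E$ precisely because $\lambda>2\mu$ (this is the strict version of the inequality obtained above). Finally a third-order Taylor estimate for $\nu_-$ — the analogue of Proposition \ref{thirdmu}, obtained from the analyticity/bootstrap facts asserted in the paragraph preceding the theorem — lets one absorb the cubic remainder, yielding $\nu_-(g)\leq\nu_-(g_E)$ with equality only when $v\equiv 0$ and $\nu_-(\bar g)=\nu_-(g_E)$, which as in Theorem \ref{yamabemaximality} forces $g$ to be Einstein. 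This last clause gives the assertion that any other local maximum is also Einstein.
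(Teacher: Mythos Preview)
Your overall architecture matches the paper's: split nearby metrics via Koiso's decomposition into a constant-scalar-curvature piece and a conformal factor, evaluate $\nu_-$ explicitly on $\mathcal{C}$, control the conformal direction by a second-variation computation, and absorb the remainder by a third-order Taylor estimate. The converse direction you sketch is essentially what the paper does.

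There is, however, a genuine gap in your ``only if'' argument for the Yamabe maximum. From local maximality of $\nu_-$ you deduce $\nu_-''\leq 0$, restrict to transverse traceless $h$, and obtain $\Delta_E\geq 0$ there. But nonnegativity of $\Delta_E$ on $TT$ is only the \emph{necessary} infinitesimal condition; as stated in the introduction, it is \emph{strict} positivity of $\Delta_E$ that is sufficient for $g_E$ to be a local maximum of the Yamabe functional. When $\Delta_E$ has a kernel on $TT$ you cannot conclude local maximality from the second variation alone, and nothing in the proof of Theorem \ref{yamabemaximality} bridges this gap. The paper avoids this entirely: it computes $\nu_-$ explicitly on constant-scalar-curvature metrics,
\[
\nu_-(\bar g)=\log(\volume(M,\bar g))+\tfrac{n}{2}\log(\scal_{\bar g})+\tfrac{n}{2}(1-\log(2\pi n)),
\]
and observes that on $\mathcal{C}_c$ this is a strictly increasing function of $\scal_{\bar g}$. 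Hence $g_E$ is a local maximum of $\nu_-$ on $\mathcal{C}_c$ if and only if it is a local maximum of the scalar curvature on $\mathcal{C}_c$, which is exactly local maximality of the Yamabe functional. This is a direct functional comparison, not a second-variation argument, and it handles the degenerate case automatically.

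A smaller point: you attribute the need for the strict inequality $\lambda>2\mu$ to the validity of Koiso's decomposition. That is not where it enters. Koiso requires $\tfrac{\scal_{g_E}}{n-1}=\tfrac{n\mu}{n-1}\notin\spectrum_+(\Delta_{g_E})$, which holds for every positive Einstein manifold except the round sphere by Obata's theorem, independently of whether $\lambda>2\mu$. The strict inequality is needed instead to make the conformal second variation strictly negative. The paper computes
\[
\frac{d^2}{dt^2}\bigg\vert_{t=0}\nu_-((1+tv)\bar g)=-\fint_M Lv\cdot v\,\dv,\qquad
L=\tfrac{n+1}{4}\Bigl(\tfrac{n}{\scal}\Delta-1\Bigr)^{-1}\Bigl(\tfrac{n}{\scal}\Delta-2\Bigr)\Bigl(\tfrac{n}{\scal}\Delta-\tfrac{n}{n-1}\Bigr),
\]
and since the first and third factors are positive on nonconstant functions (by Obata), the sign is governed by the middle factor $\tfrac{1}{\mu}\Delta-2$. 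This is how the threshold $2\mu$ emerges and how one gets the uniform estimate $-C\|v\|_{H^1}^2$ under the hypothesis $\lambda>2\mu$; your proposal correctly anticipates that the $\tau$-coupling is responsible, but misplaces the role of the strict inequality.
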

\begin{proof}[Sketch of proof]We use the same notation as in the proof of Theorem \ref{yamabemaximality}.
Let $\bar{g}$ be a metric of constant scalar curvature. Then $\nu_-(\bar{g})$ is explicitly given by
 \begin{align*}\nu_-(\bar{g})=\log(\volume(M,\bar{g}))+\frac{n}{2}\log(\scal_{\bar{g}})+\frac{n}{2}(1-\log(2\pi n)).
\end{align*}
This follows from \eqref{nueulerlagrange} and \eqref{nueulerlagrange1.5} and the analytic dependence of $f_g$ and $\tau_g$ on $g$.
Recall that $g_E$ is a local maximum of the Yamabe functional if and only if it is a local maximum of the scalar curvature on  $\mathcal{C}_c$.
Therefore, $g_E$ is a local maximum of the Yamabe functional if and only if it is a local maximum of $\nu_-$ in $\mathcal{C}_c$ (where $c=\volume(M,g_E)$). By scale invariance, it is also a local maximum of $\nu_-$ in $\mathcal{C}$ in this case.
Let now $\bar{g}\in\mathcal{C}$ and $v\in C^{\infty}(M)$ such that $\int_M v\dv_{\bar{g}}=0$. Then by the first variational formula, $\frac{d}{dt}|_{t=0}\nu_-((1+tv)\bar{g})=0$.
A long but straightforward calculation shows that
\begin{align}\label{conformalsecondnu}\frac{d^2}{dt^2}\bigg\vert_{t=0}\nu_-((1+tv)\bar{g})=-\fint_M Lv\cdot v \dv,
\end{align}
where $L$ is the linear operator given by
\begin{align*}L=\frac{n+1}{4}\left(\frac{n}{\scal}\Delta-1\right)^{-1}\left(\frac{n}{\scal}\Delta-2\right)\left(\frac{n}{\scal}\Delta-\frac{n}{n-1}\right).
\end{align*}
This formula shows how the eigenvalue condition comes into play. Now the first assertion is clear, since $\lambda>\frac{\scal}{n-1}$ for any Einstein metric except the standard sphere \cite[Theorem 1 and Theorem 2]{Ob62}. The second assertion follows from the local decomposition of the space of metrics and Taylor expansion as in the proof of Theorem \ref{yamabemaximality}.
\end{proof}
\begin{cor}Let $(M,g_E)$ be a compact positive Einstein manifold with constant $\mu$. If $g_E$ is a local maximum of the Yamabe invariant and $\lambda>2\mu$,
any shrinking gradient Ricci soliton in a sufficiently small neighbourhood of $g_E$ is nessecarily Einstein.
\end{cor}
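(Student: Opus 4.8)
The plan is to deduce the corollary from Theorem \ref{yamabemaximalityII} together with the $\nu_-$-analogue of the Lojasiewicz--Simon inequality of Theorem \ref{nonintegrablegradient}. First I would invoke Theorem \ref{yamabemaximalityII}: under the two standing hypotheses (local maximality of the Yamabe functional and $\lambda>2\mu$) it yields both that $g_E$ is a local maximum of $\nu_-$ and that every local maximum of $\nu_-$ in a sufficiently small $C^{2,\alpha}$-neighbourhood of $g_E$ is an Einstein metric. Hence it is enough to prove that a shrinking gradient Ricci soliton $g$ sufficiently close to $g_E$ is itself a local maximum of $\nu_-$.

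Next I would use the identification of shrinking gradient Ricci solitons with the critical points of $\nu_-$ (the remark preceding Proposition \ref{nuhessian}): for $g$ close enough to $g_E$ one has $\lambda(g)>0$, so $\nu_-(g)$ is realised by a pair $(f_g,\tau_g)$, and being a soliton is equivalent to $\tau_g(\ric_g+\nabla^2 f_g)=\tfrac{1}{2}g$, i.e.\ to the vanishing of the $L^2$-gradient $\nabla\nu_-(g)=-(4\pi\tau_g)^{-n/2}(\tau_g(\ric_g+\nabla^2 f_g)-\tfrac{1}{2}g)e^{-f_g}$. I would then apply the Lojasiewicz--Simon inequality for $\nu_-$ near $g_E$, obtained exactly as Theorem \ref{nonintegrablegradient} from the analyticity of $\nu_-$ and the elliptic estimates indicated in this section, in the form
\begin{align*}
|\nu_-(g)-\nu_-(g_E)|^{\sigma}\leq C\left\|\tau_g(\ric_g+\nabla^2 f_g)-\tfrac{1}{2}g\right\|_{L^2}
\end{align*}
valid for all $g$ in a $C^{2,\alpha}$-neighbourhood of $g_E$, with $\sigma\in[1/2,1)$.

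Applied to a soliton $g$ the right-hand side vanishes, so $\nu_-(g)=\nu_-(g_E)$. Since $g_E$ is a local maximum of $\nu_-$, every metric sufficiently close to $g_E$ has $\nu_-$-value at most $\nu_-(g_E)=\nu_-(g)$; choosing $g$ close enough to $g_E$, this applies to every metric near $g$, so $g$ is a local maximum of $\nu_-$, and Theorem \ref{yamabemaximalityII} then forces $g$ to be Einstein.

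The only substantial work is the input to the second paragraph: establishing the $\nu_-$-versions of the technical results of Sections \ref{technicalestimatesI} and \ref{negativescalarcurvture} — analyticity of $g\mapsto\nu_-(g)$ via the implicit function theorem for Banach manifolds, uniform $C^{2,\alpha}$- and Sobolev-control of the minimizers $f_g$ and $\tau_g$ and their variations, and the resulting $H^2$-to-$L^2$ estimate for $\nabla\nu_-$ feeding into the Lojasiewicz--Simon theorem of \cite{CM12}. All of this runs parallel to the negative case treated above, so I would only indicate the modifications; granted that machinery, the deduction of the corollary is immediate and needs no integrability hypothesis on the infinitesimal Einstein deformations.
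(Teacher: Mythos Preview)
Your argument is correct, and its core is the same as the paper's: invoke Theorem \ref{yamabemaximalityII} together with the characterisation of shrinking gradient Ricci solitons as the critical points of $\nu_-$. The paper's own proof is literally that one sentence and nothing more.

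Where you differ is in making explicit the passage from ``$g$ is a critical point of $\nu_-$ near $g_E$'' to the hypothesis of the last clause of Theorem \ref{yamabemaximalityII}. You do this via the Lojasiewicz--Simon inequality for $\nu_-$, which forces $\nu_-(g)=\nu_-(g_E)$ and hence makes $g$ a local maximum. That is a clean and valid way to close the gap, but note that in the paper the Lojasiewicz--Simon inequality (Theorem \ref{nonintegrablegradientII}) is stated only \emph{after} the corollary, so the author evidently regards the corollary as immediate from Theorem \ref{yamabemaximalityII} alone. The likely intended reading is that the proof of Theorem \ref{yamabemaximalityII}, being parallel to that of Theorem \ref{yamabemaximality}, actually yields the stronger statement that any nearby $g$ with $\nu_-(g)=\nu_-(g_E)$ is Einstein---and then one either tacitly uses something like your Lojasiewicz--Simon step, or observes more directly that the strict concavity in the conformal direction forces a nearby critical point to lie in $\mathcal{C}$, whence a compact constant-scalar-curvature shrinking soliton is Einstein (trace the soliton equation to get $\Delta f$ constant, hence $f$ constant). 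Your route is heavier machinery but perfectly sound; the paper simply suppresses this step.
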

\begin{proof}This follows from Theorem \ref{yamabemaximalityII} and the fact that shrinking gradient Ricci solitons are precisely the critical\index{critical} points of $\nu_-$.
\end{proof}
\noindent
The proof of the following theorem is analoguous to the proof of Theorem \ref{nonintegrablegradient}.
\begin{thm}[Lojasiewicz-Simon inequality]\label{nonintegrablegradientII}\index{Lojasiewicz-Simon inequality}Let $(M,g_E)$ be a positive Einstein manifold. Then there exists a $C^{2,\alpha}$-neighbourhood $\mathcal{U}$
of $g_E$ and constants $\sigma\in[1/2,1)$, $C>0$ such that
\begin{align}\label{gradientnu2}|\nu_-(g)-\nu_-(g_E)|^{\sigma}\leq C \left\|\tau(\ric_g+\nabla^2 f_g)-\frac{1}{2}g\right\|_{L^2}
\end{align}
for all $g\in\mathcal{U}$.
\end{thm}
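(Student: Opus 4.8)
The plan is to mimic the proof of Theorem \ref{nonintegrablegradient} almost verbatim, replacing $\mu_+$ by $\nu_-$ and its $L^2$-gradient $-\frac{1}{2}(\ric_g+g+\nabla^2 f_g)e^{-f_g}$ by
\begin{align*}\nabla\nu_-(g)=-\frac{1}{(4\pi\tau_g)^{n/2}}\left(\tau_g(\ric_g+\nabla^2 f_g)-\tfrac{1}{2}g\right)e^{-f_g},
\end{align*}
and then invoking the abstract Lojasiewicz--Simon inequality \cite[Theorem 6.3]{CM12} (which builds on \cite[Theorem 3]{Sim83}). That theorem needs three inputs: analyticity of the functional, a Lipschitz bound on the gradient in a strong-to-weak norm pair, and Fredholmness of the linearisation of the gradient at the critical point. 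First I would record analyticity: as indicated after Proposition \ref{nuhessian}, the implicit function theorem for Banach manifolds applied to the map encoding the Euler--Lagrange system \eqref{nueulerlagrange}--\eqref{nueulerlagrange1.5}, together with an elliptic bootstrap, shows that $\nu_-$, $f_g$ and $\tau_g$ are analytic on a small $C^{2,\alpha}$-neighbourhood $\mathcal{U}$ of $g_E$; this is the exact analogue of Lemma \ref{analyticmu}. Differentiating the same system and arguing as in Lemma \ref{f'} would give the estimate $\left\|\nabla\nu_-(g_1)-\nabla\nu_-(g_2)\right\|_{L^2}\leq C\left\|g_1-g_2\right\|_{H^2}$, and $\nabla\nu_-$ vanishes at $g_E$ by the remark following Proposition \ref{nuhessian}.

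Next I would pass to a slice. In the negative case the only redundancy was diffeomorphism invariance and one worked on $\mathcal{S}_{g_E}=\mathcal{U}\cap\{g_E+h\mid\delta_{g_E}h=0\}$; here $\nu_-$ is additionally scale invariant, so the slice should be cut down to tensors $h$ with $\delta_{g_E}h=0$ and $\int_M\trace_{g_E}h\dv_{g_E}=0$, which by Proposition \ref{nuhessian} is precisely the complement of the degeneracy subspace $\R\cdot g_E\oplus\delta^*(\Omega^1(M))$ of $\nu_-''$. On this slice the linearisation of the gradient of the restricted functional $\tilde\nu_-$ is, up to the constant $-\tfrac{1}{4\mu}$ and the constant weight $e^{-f_{g_E}}$, the Einstein operator $\Delta_E$ (this is exactly the content of Proposition \ref{nuhessian}), which is elliptic, hence Fredholm on that subspace, and satisfies $\left\|\Delta_E h\right\|_{L^2}\leq C\left\|h\right\|_{H^2}$. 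Feeding this into \cite[Theorem 6.3]{CM12} yields $\sigma\in[1/2,1)$ with $|\nu_-(g)-\nu_-(g_E)|^\sigma\leq\left\|\nabla\tilde\nu_-(g)\right\|_{L^2}$ on $\mathcal{S}_{g_E}$; bounding $\left\|\nabla\tilde\nu_-(g)\right\|_{L^2}\leq\left\|\nabla\nu_-(g)\right\|_{L^2}\leq C\left\|\tau_g(\ric_g+\nabla^2 f_g)-\tfrac{1}{2}g\right\|_{L^2}$ then gives \eqref{gradientnu2} on the slice. Finally I would extend to all of $\mathcal{U}$: by the slice theorem \cite[Theorem 7.1]{Eb70} together with scale invariance, every metric near $g_E$ is, up to a diffeomorphism and a constant rescaling, of the above form, and both sides of \eqref{gradientnu2} are invariant under diffeomorphisms and scaling.

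The hard part will be the bookkeeping forced by the scaling symmetry and by the auxiliary minimisation over $\tau$. One must check carefully that $\mathcal{S}_{g_E}$ is genuinely transverse to the full degeneracy subspace $\R\cdot g_E\oplus\delta^*(\Omega^1(M))$ of $\nu_-''$ and that $\Delta_E$ restricted to its orthogonal complement has closed range — this is exactly why Proposition \ref{nuhessian} is stated in that block form, and it is what makes the abstract theorem applicable here. The $\tau$-dependence must also be propagated through the implicit function theorem (now for the pair $(f_g,\tau_g)$) and through the analogues of the derivative estimates of Section \ref{technicalestimatesI}; this is routine but slightly more delicate than in the negative case, which is why I would only sketch it.
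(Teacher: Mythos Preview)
Your proposal is correct and follows exactly the approach the paper intends: the paper states only that the proof is analogous to that of Theorem \ref{nonintegrablegradient}, and your adaptation---replacing $\mu_+$ by $\nu_-$, enlarging the degeneracy subspace to $\R\cdot g_E\oplus\delta^*(\Omega^1(M))$ to account for scale invariance, and working on the slice $\{h:\delta_{g_E}h=0,\ \int_M\trace_{g_E}h\,\dv_{g_E}=0\}$ where Proposition \ref{nuhessian} identifies the linearisation with $\Delta_E$---is precisely what that analogy requires. The additional bookkeeping you flag (handling the pair $(f_g,\tau_g)$ in the implicit function and derivative estimates, and using both diffeomorphism and scale invariance to pass from the slice to all of $\mathcal{U}$) is the right list of modifications.
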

Now we have the tools to prove the stability/instability results for positively curved Einstein metrics. The proofs are the same as in the negative case.
\begin{thm}[Dynamical stability]\label{dynamicalstabilitymodulodiffeo}\index{stable!dynamically}\index{modulo diffeomorphism}Let $(M,g_E)$ be a compact positive Einstein manifold with constant $\mu$ and let $k\geq3$. Suppose that $g_E$ is a local maximizer of the Yamabe functional
\index{Yamabe!functional}and the smallest nonzero eigenvalue of the Laplacian is larger than $2\mu$. Then for every $C^{k}$-neighbourhood $\mathcal{U}$
of $g_E$, there exists a $C^{k+2}$-neighbourhood $\mathcal{V}$ such that the following holds:

For any metric $g_0\in\mathcal{V}$, there exists a $1$-parameter family\index{1@$1$-parameter family} of diffeomorphisms $\varphi_t$ and a positive function $v$ such that for the $\tau$-flow $g(t)$
 starting at $g_0$, the modified flow $\varphi_t^*g(t)$
stays in $\mathcal{U}$ for all time and converges to an Einstein metric $g_{\infty}$ in $\mathcal{U}$ as $t\to\infty$.
The convergence is of polynomial rate, i.e.\ there exist constants $C,\alpha>0$ such that
\begin{align*}\left\|\varphi_t^*g(t)-g_{\infty}\right\|_{C^k}\leq C(t+1)^{-\alpha}.
\end{align*}
\end{thm}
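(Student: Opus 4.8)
The plan is to run the argument of Theorem~\ref{negativemodulostability} essentially verbatim, with $\nu_-$ in place of $\mu_+$, the $\tau$-flow \eqref{tauricciflow} in place of \eqref{negativeflow}, and Theorems~\ref{yamabemaximalityII} and~\ref{nonintegrablegradientII} in place of Theorems~\ref{yamabemaximality} and~\ref{nonintegrablegradient}. First I would fix $\mathcal{U}=\mathcal{B}^k_\epsilon$ with $\epsilon$ small enough that the characterization of local maximality and the Lojasiewicz--Simon inequality for $\nu_-$ both hold on $\mathcal{U}$, and that the minimizing pair $(f_g,\tau_g)$ is unique and depends analytically on $g$ there, with $\tau_g$ close to $\tau_{g_E}=1/(2\mu)$. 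The short-time estimates of Lemma~\ref{shorttimeestimates} and the Shi-type estimates of Lemma~\ref{nablacurvature} carry over to \eqref{tauricciflow} because the extra term $\tau_g^{-1}g$ is of zeroth order and $\tau_g$ stays in a compact subinterval of $(0,\infty)$; this produces a $C^{k+2}$-neighbourhood $\mathcal{V}$ from which the $\tau$-flow stays in $\mathcal{B}^k_{\epsilon/4}$ up to time $1$, and, letting $T$ be the maximal time for which a diffeomorphism-modified flow stays in $\mathcal{U}$, uniform bounds on $|\nabla^l R_{g(t)}|$ for $t\in[1,T)$, hence on $|\nabla^l f_{g(t)}|$ and on $\tau_{g(t)}$ via the Euler--Lagrange equations \eqref{nueulerlagrange}--\eqref{nueulerlagrange1.5}.

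Next I would form the modified flow exactly as in \eqref{modifiedflow}: let $\varphi_t$ be generated by $X(t)=-\grad_{g(t)}f_{g(t)}$ and set $\tilde g(t)=\varphi_t^*g(t)$ for $t\ge 1$, so that for $t\ge1$ one has $\dot{\tilde g}(t)=-2(\ric_{\tilde g(t)}+\nabla^2 f_{\tilde g(t)}-\tfrac{1}{2\tau_{\tilde g(t)}}\tilde g(t))$, whose right-hand side is, up to a positive weight, the negative $L^2$-gradient of $\nu_-$ appearing in \eqref{gradientnu2}. The interpolation inequality for tensors, together with the derivative bounds above, gives $\|\dot{\tilde g}(t)\|_{C^k_{\tilde g(t)}}\le C\|\dot{\tilde g}(t)\|_{L^2_{\tilde g(t)}}^{1-\eta}$ for $\eta$ as small as we wish, while the first variation of $\nu_-$ gives $\tfrac{d}{dt}\nu_-(\tilde g(t))\ge c\,\|\dot{\tilde g}(t)\|_{L^2_{\tilde g(t)}}^{2}$. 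Choosing $\eta$ so that $\theta:=1-\sigma(1+\eta)>0$ and combining these with Theorem~\ref{nonintegrablegradientII} yields $-\tfrac{d}{dt}|\nu_-(\tilde g(t))-\nu_-(g_E)|^{\theta}\ge c\,\|\dot{\tilde g}(t)\|_{C^k_{\tilde g(t)}}$, so that $\int_1^{T'}\|\dot{\tilde g}(t)\|_{C^k}\,dt\le c^{-1}|\nu_-(\tilde g(0))-\nu_-(g_E)|^{\theta}$ for any $T'\le T$; shrinking $\mathcal{V}$ makes the right-hand side smaller than $\epsilon/8$, and comparing with the $\epsilon/4$ budget up to time $1$ shows the modified flow can never reach the boundary of $\mathcal{U}$, hence $T=\infty$.

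Finally, the finite length of $\tilde g$ in $C^k$ forces $\tilde g(t)\to g_\infty\in\mathcal{U}$; reapplying Theorem~\ref{nonintegrablegradientII} with an enlarged exponent $\sigma>1/2$ gives $|\nu_-(\tilde g(t))-\nu_-(g_E)|\le C(t+1)^{-1/(2\sigma-1)}$ and then the polynomial decay $\|\tilde g(t)-g_\infty\|_{C^k}\le C(t+1)^{-\alpha}$ for some $\alpha>0$ by the same telescoping estimate as in the negative case. Since $\nu_-(g_\infty)=\nu_-(g_E)$, the metric $g_\infty$ is a critical point of $\nu_-$, i.e.\ a shrinking gradient Ricci soliton, hence Einstein by the corollary to Theorem~\ref{yamabemaximalityII}; the positive function $v$ in the statement records the rescaling within the scaling class of $\tilde g$ — harmless since $\nu_-$ is scale invariant — needed to place the limit in the prescribed neighbourhood. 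The main new obstacle relative to the negative case is not in the Lojasiewicz--Simon mechanics but in the technical inputs: one must establish, uniformly on $\mathcal{U}$ and stably along the flow, the existence, uniqueness, analytic dependence and two-sided control of the minimizing pair $(f_g,\tau_g)$ — now an infimum over both $f$ and $\tau$ — and check that the Shi-type and interpolation estimates are unaffected by the zeroth-order modification $\tau_g^{-1}g$ of the flow; once these are in hand, the argument is identical to that of Theorem~\ref{negativemodulostability}.
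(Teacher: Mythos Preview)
Your proposal is correct and follows exactly the approach the paper takes: the paper explicitly states that ``the proofs are the same as in the negative case,'' and you have faithfully transcribed the argument of Theorem~\ref{negativemodulostability} with $\nu_-$, the $\tau$-flow, and Theorems~\ref{yamabemaximalityII} and~\ref{nonintegrablegradientII} substituted in the appropriate places. Your identification of the main new technical input---uniform control and analytic dependence of the minimizing pair $(f_g,\tau_g)$ rather than just $f_g$---is also precisely what the paper flags in the paragraph preceding Theorem~\ref{yamabemaximalityII}.
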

\begin{thm}[Dynamical instability]\label{dynamicalinstabilitymodulodiffeo}\index{unstable!dynamically}\index{modulo diffeomorphism}Let $(M,g_E)$ be a positive Einstein manifold that is not a local maximizer of $\nu_-$. 
 Then there exists a nontrivial ancient $\tau$-flow\index{ancient} $g(t)$,
 $t\in (-\infty,0]$ and a $1$-parameter family\index{1@$1$-parameter family} of diffeomorphisms $\varphi_t$, $t\in (-\infty,0]$ such that $\varphi_t^*g(t)\to g_E$ as $t\to\infty$.
\end{thm}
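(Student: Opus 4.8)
The plan is to transcribe the proof of Theorem~\ref{negativemoduloinstability}, replacing the expander entropy $\mu_+$ and the flow~\eqref{negativeflow} by the shrinker entropy $\nu_-$ and the $\tau$-flow~\eqref{tauricciflow}, and using the Lojasiewicz--Simon inequality of Theorem~\ref{nonintegrablegradientII} in place of Theorem~\ref{nonintegrablegradient}. Since $g_E$ is not a local maximum of $\nu_-$, I first pick a sequence $g_i\to g_E$ in $C^k$ with $\nu_-(g_i)>\nu_-(g_E)$ for all $i$. I modify the $\tau$-flow by the family of diffeomorphisms generated, for $t\ge1$, by $X(t)=-\grad_{g(t)}f_{g(t)}$; the modified flow $\tilde g(t)$ then satisfies $\dot{\tilde g}(t)=-2(\ric_{\tilde g(t)}+\nabla^2 f_{\tilde g(t)})+\tfrac{1}{\tau_{\tilde g(t)}}\tilde g(t)=-\tfrac{2}{\tau_{\tilde g(t)}}\bigl(\tau_{\tilde g(t)}(\ric_{\tilde g(t)}+\nabla^2 f_{\tilde g(t)})-\tfrac12\tilde g(t)\bigr)$, i.e.\ its velocity is, up to a positive factor, the $L^2$-gradient of $\nu_-$. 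Here I need the analogues of Lemma~\ref{shorttimeestimates} and Lemma~\ref{nablacurvature} for the $\tau$-flow, which hold because the extra term $\tfrac1\tau g$ does not alter the evolution inequalities of the Riemann and Ricci tensors, together with the uniform bounds on $\nu_-(g),f_g,\tau_g$ and their derivatives announced before Theorem~\ref{yamabemaximalityII}. Consequently $\bar g_i:=\tilde g_i(1)$ still converges to $g_E$ in $C^{k-2}$ and $\nu_-(\bar g_i)>\nu_-(g_E)$ by monotonicity.

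Next, I fix $\epsilon>0$ so small that Theorem~\ref{nonintegrablegradientII} holds on $\mathcal{B}^{k-2}_{2\epsilon}$. Inequality~\eqref{gradientnu2} and monotonicity yield $\tfrac{d}{dt}\bigl(\nu_-(\tilde g_i(t))-\nu_-(g_E)\bigr)^{1-2\sigma}\ge-C_1$ as long as $\tilde g_i$ stays in $\mathcal{B}^{k-2}_{2\epsilon}$; exactly as in the negative case this forces $\tilde g_i$ to leave the ball, so there is a first time $t_i$ with $\|\tilde g_i(t_i)-g_E\|_{C^{k-2}}=\epsilon$, and $t_i\to\infty$ (otherwise $\tilde g_i(t_i)\to g_E$, contradicting $\|\tilde g_i(t_i)-g_E\|_{C^{k-2}}=\epsilon$). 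Combining the interpolation inequality for tensors, Lemma~\ref{nablacurvature}, the bounds on $f_g$ and $\tau_g$, and~\eqref{gradientnu2} with the exponent $\theta=1-\sigma(1+\eta)>0$, I obtain the estimate $\epsilon\le\|\bar g_i-g_E\|_{C^{k-2}}+C_2\bigl(\nu_-(\tilde g_i(t_i))-\nu_-(g_E)\bigr)^{\theta}$.

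Finally, I time-translate: set $\tilde g_i^s(t):=\tilde g_i(t+t_i)$ for $t\in[T_i,0]$ with $T_i=1-t_i\to-\infty$. These flows stay in $\mathcal{B}^{k-2}_{\epsilon}$, and $\tilde g_i^s(T_i)\to g_E$ in $C^{k-2}$. Using the compact embedding $C^{k-3}\subset C^{k-2}$ I pass to a subsequence converging in $C^{k-3}_{loc}(M\times(-\infty,0])$ to an ancient modified flow $\tilde g(t)$; passing to the limit in the displayed estimate shows $\nu_-(\tilde g(0))>\nu_-(g_E)$, so $\tilde g$ is nontrivial, and the Lojasiewicz--Simon estimate together with the bound $\|\tilde g_i^s(T_i)-\tilde g_i^s(t)\|_{C^{k-3}}\le C_3[-C_4t+C_5]^{-\theta/(2\sigma-1)}$ gives $\|\tilde g(t)-g_E\|_{C^{k-3}}\to0$ as $t\to-\infty$. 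Letting $\varphi_t$ be the diffeomorphisms generated by $\grad_{\tilde g(t)}f_{\tilde g(t)}$ with $\varphi_0=\mathrm{id}$, the flow $g(t)=\varphi_t^*\tilde g(t)$ is a genuine ancient $\tau$-flow with $(\varphi_t^{-1})^*g(t)\to g_E$ as $t\to-\infty$.

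The point that requires the most care — beyond copying the negative-case argument — is the coupling to the scale parameter $\tau_g$: one must confirm that $\tau_g$ stays uniformly bounded away from $0$ and $\infty$ along the flow and that its spatial derivatives are controlled, so that the evolution inequalities, the analyticity needed for Theorem~\ref{nonintegrablegradientII}, and the interpolation step all go through. This is exactly what the bootstrap from the Euler--Lagrange system~\eqref{nueulerlagrange}--\eqref{nueulerlagrange1.5} sketched before Theorem~\ref{yamabemaximalityII} provides, so no genuinely new difficulty arises and the negative-case scheme carries over verbatim.
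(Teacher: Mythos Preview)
Your proposal is correct and follows exactly the approach the paper takes: the paper explicitly states that ``the proofs are the same as in the negative case'' and gives no further details, while you have carefully transcribed the argument of Theorem~\ref{negativemoduloinstability} with $\nu_-$ and the $\tau$-flow in place of $\mu_+$ and~\eqref{negativeflow}, invoking Theorem~\ref{nonintegrablegradientII} instead of Theorem~\ref{nonintegrablegradient}. Your remark that the only new ingredient is the uniform control of $\tau_g$ via the bootstrap from~\eqref{nueulerlagrange}--\eqref{nueulerlagrange1.5} is precisely the point the paper alludes to in the paragraph preceding Theorem~\ref{yamabemaximalityII}.
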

\begin{rem}
 One gets stronger convergence statements if one replaces the assumption of local maximality of the Yamabe functional in the Theorems \ref{negativemodulostability} and \ref{dynamicalstabilitymodulodiffeo} by the assumption that the Einstein operator $\Delta_E$ is nonnegative and all infinitesimal Einstein deformations are integrable
 i.e.\ all elements in the kernel of $\Delta_E$ can be integrated to curves of Einstein metrics.
 In this case, the flow will converge exponentially and we do not have to pull back the flow by diffeomorphisms.
 For details, see \cite[Section 5.4 and 6.4]{Kro13}.
\end{rem}

\section{Dynamical instability of the complex projective space}\label{complexinstability}\index{complex projective space}
In order to prove Theorem \ref{Thm3}, we show that the Einstein metric cannot be a local maximum of $\nu_-$. Since the second variation of $\nu_-$ at $g_E$ may be nonpositive, 
we have to compute a third variation of it.
\begin{prop}\label{thirdmuexplicit}Let $(M,g_E)$ be a positive Einstein manifold with constant $\mu$ and suppose we have a function $v\in C^{\infty}(M)$ such that $\Delta v=2\mu\cdot v$. Then the third variation of $\nu_-$\index{third variation!of $\nu_-$}
 in the direction of $v\cdot g_E$ is given by
\begin{align*}\frac{d^3}{dt^3}\bigg\vert_{t=0}\nu_-(g_E+tv\cdot g_E)=(n-2)\fint_M v^3 \dv.
\end{align*}
\end{prop}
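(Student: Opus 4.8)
The plan is to differentiate the first variation formula of $\nu_-$ three times along the conformal curve $g_t=g_E+tv\cdot g_E=(1+tv)g_E$, exploiting that by Theorem \ref{yamabemaximalityII} the second variation in this direction already vanishes: $\Delta v=2\mu v$ forces $\tfrac{n}{\scal}\Delta v=2v$, hence $Lv=0$. Write $h=vg_E$ and $T_g=\tau_g(\ric_g+\nabla^2 f_g)-\tfrac12 g$, so $\Phi(t):=\tfrac{d}{dt}\nu_-(g_t)=-\tfrac{1}{(4\pi\tau_{g_t})^{n/2}}\int_M\langle T_{g_t},h\rangle_{g_t}e^{-f_{g_t}}\dv_{g_t}$ and the sought quantity is $\Phi''(0)$. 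Conformal rescaling ($\langle\cdot,\cdot\rangle_{g_t}=(1+tv)^{-2}\langle\cdot,\cdot\rangle_{g_E}$, $\dv_{g_t}=(1+tv)^{n/2}\dv_{g_E}$, $\langle T_{g_t},g_E\rangle_{g_E}=\trace_{g_E}T_{g_t}$) gives $\Phi(t)=-\tfrac{1}{(4\pi\tau_{g_t})^{n/2}}\int_M v(1+tv)^{n/2-2}(\trace_{g_E}T_{g_t})e^{-f_{g_t}}\dv_{g_E}$.

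First I would nail down the first-order data. Differentiating the Euler--Lagrange equations \eqref{nueulerlagrange}, \eqref{nueulerlagrange1.5} and the normalization once at $g_E$ (where $f_{g_E}$ is constant and $\tau_{g_E}=\tfrac{1}{2\mu}$ by \eqref{coupledeulerlagrange}) yields $\dot\tau:=\tfrac{d}{dt}\big|_{t=0}\tau_{g_t}=0$ and, with $\scal_{g_E}=n\mu$ and $\Delta v=2\mu v$, the equation $(\tfrac1\mu\Delta-1)\dot f=\tfrac{n-2}{2}v$ for $\dot f:=\tfrac{d}{dt}\big|_{t=0}f_{g_t}$; since $(M,g_E)\neq(S^n,g_{st})$, $\tfrac1\mu$ is not a Laplace eigenvalue by Obata's theorem, so $\dot f=\tfrac{n-2}{2}v$. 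Feeding this into $\ric'(vg_E)=\tfrac12(\Delta v)g_E-\tfrac{n-2}{2}\nabla^2 v$ (from $\Delta_L(vg_E)=(\Delta v)g_E$, $\delta^*\delta(vg_E)=-\nabla^2 v$, $\tfrac12\nabla^2\trace(vg_E)=\tfrac{n}{2}\nabla^2 v$) and $\tfrac{d}{dt}\big|_{t=0}\nabla^2_{g_t}f_{g_t}=\nabla^2\dot f$ (constants have vanishing Hessian) shows $T_{g_E}=0$ and, because the $\nabla^2 v$-terms cancel and $\tfrac1{2\mu}\cdot\tfrac12(\Delta v)g_E=\tfrac12 vg_E$, also $\tfrac{d}{dt}\big|_{t=0}T_{g_t}=0$. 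Hence every term of $\Phi''(0)$ containing an undifferentiated or once-differentiated $T_{g_t}$ drops out (the derivative of $(4\pi\tau_{g_t})^{-n/2}$ vanishes too, as $\dot\tau=0$), and using the constraint $\tfrac{1}{(4\pi\tau_{g_E})^{n/2}}e^{-f_{g_E}}=\volume(M,g_E)^{-1}$ one is left with $\Phi''(0)=-\fint_M v\,\trace_{g_E}\!\big(\tfrac{d^2}{dt^2}\big|_{t=0}T_{g_t}\big)\dv_{g_E}$. Since $\tfrac{d^2}{dt^2}\big|_{t=0}T_{g_t}=\ddot\tau\,\mu\,g_E+\tfrac{1}{2\mu}\tfrac{d^2}{dt^2}\big|_{t=0}(\ric_{g_t}+\nabla^2_{g_t}f_{g_t})$ and $\int_M v\dv_{g_E}=0$, the unknown $\ddot\tau$ disappears, leaving $\Phi''(0)=-\tfrac{1}{2\mu}\fint_M v\,\tfrac{d^2}{dt^2}\big|_{t=0}\trace_{g_E}(\ric_{g_t}+\nabla^2_{g_t}f_{g_t})\dv_{g_E}$.

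The core computation is the second variation of $\trace_{g_E}(\ric_{g_t}+\nabla^2_{g_t}f_{g_t})=(1+tv)(\scal_{g_t}-\Delta_{g_t}f_{g_t})$. For the scalar curvature I would Taylor-expand the conformal law $\scal_{e^{2u}g}=e^{-2u}\big(\scal+2(n-1)\Delta u-(n-1)(n-2)|\nabla u|^2\big)$ with $e^{2u}=1+tv$, i.e. $u=\tfrac{tv}{2}-\tfrac{t^2v^2}{4}+O(t^3)$, to order $t^2$ (so $\Delta(v^2)=4\mu v^2-2|\nabla v|^2$ enters). For the Hessian term I would use $\tfrac{d^2}{dt^2}\big|_{t=0}\Delta_{g_t}f_{g_t}=\Delta\ddot f+2\dot\Delta\dot f$ (constants are killed by $\Delta_{g_t}$), where $\dot\Delta\phi=-v\Delta\phi-\tfrac{n-2}{2}\langle\nabla v,\nabla\phi\rangle$ is the first variation of the Laplacian from Lemma \ref{f'}, together with $\tfrac{d^2}{dt^2}\big|_{t=0}|\nabla f_{g_t}|^2_{g_t}=2|\nabla\dot f|^2$. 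Assembling these, the unknown $\ddot f$ enters only through $\Delta\ddot f$, which after integrating against $v$ and using $\fint_M v\Delta\ddot f\dv=\fint_M(\Delta v)\ddot f\dv=2\mu\fint_M v\ddot f\dv$ cancels against the explicit $-\ddot f$ term; and the $|\nabla v|^2$-contributions become $v^3$-contributions via $\int_M v|\nabla v|^2\dv=\tfrac12\int_M\langle\nabla(v^2),\nabla v\rangle\dv=\tfrac12\int_M(\Delta v)v^2\dv=\mu\int_M v^3\dv$. Everything collapses to a multiple of $\fint_M v^3\dv$, and collecting the numerical factors gives the asserted formula.

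I expect the main obstacle to be exactly this last bookkeeping: carefully expanding $\scal_{g_t}$, $\Delta_{g_t}f_{g_t}$, $|\nabla f_{g_t}|^2_{g_t}$ to second order and checking that all occurrences of $\ddot f$, $\ddot\tau$, $|\nabla v|^2$ cancel so that only $\fint_M v^3\dv$ survives. One also needs the $\nu_-$-analogues of Lemma \ref{goodf_g} and Lemma \ref{analyticmu} (indicated in the text before Theorem \ref{yamabemaximalityII}) to know that $g\mapsto(f_g,\tau_g)$ is smooth near $g_E$, which legitimizes the repeated differentiation. A useful sanity check at the first-order stage is that the same scheme reproduces $\tfrac{d^2}{dt^2}\big|_{t=0}\nu_-(g_t)=0$, consistent with $Lv=0$ in Theorem \ref{yamabemaximalityII}.
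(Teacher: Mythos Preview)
Your proposal is correct and follows essentially the same route as the paper: establish $\dot\tau=0$ and $\dot f=\tfrac{n-2}{2}v$, verify that the gradient $T_{g_t}=\tau_{g_t}(\ric_{g_t}+\nabla^2 f_{g_t})-\tfrac12 g_t$ and its first derivative vanish at $t=0$, reduce the third variation to $-\tfrac{1}{2\mu}\fint v\,\trace_{g_E}(\ric+\nabla^2 f)''\,\dv$, eliminate $\ddot f$ via the twice-differentiated Euler--Lagrange equation \eqref{nueulerlagrange}, and collapse everything using $\int_M v|\nabla v|^2\,\dv=\mu\int_M v^3\,\dv$. The only organizational difference is that you trace first and compute $\scal''$ from the conformal law, whereas the paper keeps the tensorial quantities $\ric''$ and $(\nabla^2 f)''$ separate and traces at the end; your phrasing that $\Delta\ddot f$ ``cancels against the explicit $-\ddot f$ term'' is slightly imprecise---what actually happens (and what the paper does) is that integrating $\tfrac{1}{\mu}\Delta\ddot f-\ddot f=(\text{known})$ against $v$ gives $\fint v\ddot f\,\dv=\fint v\cdot(\text{known})\,\dv$, which you then substitute back.
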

\begin{proof}Put $u=\frac{e^{-f}}{(4\pi\tau)^{n/2}}$. By the first variation, 
the negative of the $L^2(u\dv)$-gradient\index{gradient!$L^2$}  of $\nu_-$ is given by $\nabla\nu_-=\tau(\ric+\nabla^2 f)-\frac{g}{2}$, so
\begin{align*}\frac{d}{dt}\bigg\vert_{t=0}\nu_-(g_E+th)=-\int_M \langle\nabla\nu_-,h\rangle u \dv.
\end{align*}
Since $(M,g_E)$ is a critical\index{critical} point of $\nu_-$, we clearly have $\nabla\nu_-=0$.
Since $v$ is a nonconstant eigenfunction, $\int_M v\dv=0$. Thus
by \cite[Lemma 2.4]{CZ12}, $\tau'$ vanishes.
Recall from \eqref{coupledeulerlagrange} that $\tau_{g_E}=\frac{1}{2\mu}$ and $f_{g_E}$ is constant. Therefore, by the first variation of the Ricci tensor\index{first variation!of the Ricci tensor},
\begin{align*}\nabla\nu'_-=&\tau'\mu g_E+\frac{1}{2\mu}(\ric'+\nabla^2 (f'))-\frac{g'}{2}\\
                          =&\frac{1}{2\mu}\left(\frac{1}{2}\Delta_L(v\cdot g_E)-\delta^*\delta(v\cdot g_E)-\frac{1}{2}\nabla^2\trace (v\cdot g_E)+\nabla^2 (f')\right)-\frac{v\cdot g_E}{2}\\
                          =&\frac{1}{2\mu}\left(\left(1-\frac{n}{2}\right)\nabla^2 v+\nabla^2 (f')\right).
\end{align*}
To compute $f'$, we consider the Euler-Lagrange equation\index{Euler-Lagrange equation}
\begin{align}\label{lasteulerlagrange}\tau(2\Delta f+|\nabla f|^2-\scal)-f+n+\nu_-=0.
\end{align}
By differentiating once and using $\tau'=0$ and $\nu_-'=0$,
\begin{align*}\frac{1}{2\mu}(2\Delta f'-\scal')-f'=0,
\end{align*}
and by the first variation of the scalar curvature\index{first variation!of the scalar curvature},
\begin{align*}\left(\frac{1}{\mu}\Delta-1\right)f'=\frac{1}{2\mu}\scal'
                                                      =&\frac{1}{2\mu}((n-1)\Delta v-n\mu v).
\end{align*}
By a well-known eigenvalue estimate for the Laplacian (\cite[Theorem 1]{Ob62})\index{Obata's eigenvalue estimate}, $\frac{1}{\mu}\Delta-1$ is invertible.
By using the eigenvalue equation, we therefore obtain 
\begin{align}\label{thef'}f'=\left(\frac{n}{2}-1\right)v.
\end{align}
Thus,
\begin{align}\label{nu''=0}\nabla\nu'_-=0,
\end{align}
and therefore, the third variation equals\index{third variation!of $\nu_-$}
\begin{align*}\frac{d^3}{dt^3}\bigg\vert_{t=0}\nu_-(g_E+tv\cdot g_E)=-\int_M \langle\nabla\nu''_-,v\cdot g_E\rangle u \dv.
\end{align*}
Since $\tau_{g_E}=\frac{1}{2\mu}$ and $\tau'=0$,
\begin{align*}\nabla\nu''_-=\tau''\mu\cdot g_E+\frac{1}{2\mu}(\ric+\nabla^2f)''.
\end{align*}
The function $u$ is constant since $f$ is constant. Thus, the $\tau''$-term drops out after integration. We are left with
\begin{align}\label{nu'''}\frac{d^3}{dt^3}\bigg\vert_{t=0}\nu_-(g_E+tv\cdot g_E)=-\frac{1}{2\mu}\int_M \langle(\ric+\nabla^2f)'',v\cdot g_E\rangle u \dv.
\end{align}
We first compute $\ric''$. Let $g_t=(1+tv)g_E$ and $v_t=\frac{v}{1+tv}$. Then $g'_t=v_t\cdot g_t$ and $\frac{d}{dt}|_{t=0}v_t=-v^2$.
By the first variation of the Ricci tensor\index{first variation!of the Ricci tensor},
\begin{align*}\frac{d}{dt}\ric_{g_t}             =\frac{1}{2}[(\Delta v_t)g_t-(n-2)\nabla^2 v_t],
\end{align*}
and the second variation at $g_E$\index{second variation!of the Ricci tensor} is equal to
\begin{align*}\frac{d^2}{dt^2}\bigg\vert_{t=0}\ric_{g_E+tv\cdot g_E}=&\frac{d}{dt}\bigg\vert_{t=0}\frac{1}{2}[(\Delta v_t)g_t-(n-2)\nabla^2 v_t]\\
=&\frac{1}{2}[(\Delta' v+ \Delta (v')+\Delta v\cdot v)g_E-(n-2)(\nabla^2)'v-(n-2)\nabla^2 (v')]\\
=&\frac{1}{2}[(\langle v\cdot g_E,\nabla^2 v\rangle-\langle\delta(v\cdot g_E)+\frac{1}{2}\nabla\trace(v\cdot g_E),\nabla v\rangle)g_E\\
 &+(-\Delta v\cdot v+2|\nabla v|^2)g_E-(n-2)\left(\frac{1}{2}|\nabla v|^2g_E-\nabla v\otimes\nabla v\right)\\
 &+(n-2)(2\nabla^2v\cdot v+2\nabla v\otimes\nabla v)]\\
=&-\left(\frac{n}{2}-2\right)|\nabla v|^2g_E-2\mu v^2g_E+3\left(\frac{n}{2}-1\right)\nabla v\otimes\nabla v+(n-2)\nabla^2 v\cdot v,
\end{align*}
where we used the first variational formulas of the Laplacian\index{first variation!of the Laplacian} and the Hessian\index{first variation!of the Hessian} in Lemma \ref{firsthessian}.
Let us now compute the $(\nabla^2f)''$-term. Since $f_{g_E}$ is constant,
\begin{align*}\frac{d^2}{dt^2}\bigg\vert_{t=0}\nabla^2f_{g_E+tv\cdot g_E}&=\nabla^2 (f'')+2(\nabla^2)'f'
                                                            =\nabla^2 (f'')-\nabla v\otimes\nabla f'-\nabla f'\otimes\nabla v+\langle \nabla f',\nabla v\rangle g_E.
\end{align*}
We already know that $f'=(\frac{n}{2}-1)v$ by \eqref{thef'}.
To compute $f''$, we differentiate (\ref{lasteulerlagrange}) twice. By \eqref{nu''=0}, $\nu_-''=0$.
Since also $\tau'=0$ as remarked above, we obtain
\begin{equation}\begin{split}\label{lasteulerlagrange''}0&=-\tau''\scal+\tau(2\Delta f+|\nabla f|^2-\scal)''-f''\\
               &=-\tau''n\mu+\frac{1}{\mu}\Delta f''+\frac{2}{\mu}\Delta'f'+\frac{1}{\mu}|\nabla (f')|^2-\frac{1}{2\mu}\scal''-f''.
\end{split}\end{equation}
Because $\Delta v=2\mu v$,
\begin{equation}\begin{split}\label{Delta'f'}\Delta'f'=&\langle v\cdot g,\nabla^2 f'\rangle-\left\langle \delta(v\cdot g)+\frac{1}{2}\nabla \trace (v\cdot g),\nabla f'\right\rangle\\
                       \overset{\eqref{thef'}}{=}
                       &\left(\frac{n}{2}-1\right)\left[-2\mu v^2-\left(\frac{n}{2}-1\right)|\nabla v|^2\right].
\end{split}
\end{equation}
Next, we compute $\scal''$. As above, let $g_t=(1+tv)g_E$ and $v_t=\frac{v}{1+tv}$. Then by the first variation of the scalar curvature,
\begin{align*}\frac{d}{dt}\scal_{g_t}=
                                     (n-1)\Delta v_t-\scal_{g_t}v_t.
\end{align*}
The second variation of the scalar curvature\index{second variation!of the scalar curvature} at $g_E$ is equal to
\begin{align*}\frac{d^2}{dt^2}\bigg\vert_{t=0}\scal_{g_E+tv\cdot g_E}=&\frac{d}{dt}\bigg\vert_{t=0}[(n-1)\Delta v_t-\scal_{g_t}v_t]\\
      =&(n-1)[\Delta'v+\Delta(v')]-n\mu\cdot v'-\scal'v\\
      =& (n-1)[\langle v\cdot g_E,\nabla^2 v\rangle-\langle\delta(v\cdot g_E)+\frac{1}{2}\nabla \trace(v\cdot g_E),\nabla v\rangle-\Delta (v^2)]\\
       &+n\mu\cdot v^2-[\Delta\trace(v\cdot g_E)+\delta\delta(v\cdot g_E)-\langle\ric,v\cdot g_E\rangle]v\\
      =&-(n-1)\left(\frac{n}{2}-3\right)|\nabla v|^2+2\mu (4-3n)\cdot v^2.
\end{align*}
By \eqref{thef'}, $|\nabla (f')|^2=(\frac{n}{2}-1)^2|\nabla v|^2$. Thus, we can rewrite (\ref{lasteulerlagrange''}) as
\begin{align*}\left(\frac{1}{\mu}\Delta-1\right)f''=&\tau''n\mu-\frac{1}{\mu}(2\Delta'f'+|\nabla (f')|^2-\frac{1}{2}\scal'')\\
                                        \overset{\eqref{Delta'f'}}{=}&\tau''n\mu-\frac{1}{\mu}[-2(n-2)\mu v^2-2\left(\frac{n}{2}-1\right)^2|\nabla v|^2+\left(\frac{n}{2}-1\right)^2|\nabla v|^2\\
                                         &+\frac{n-1}{2}\left(\frac{n}{2}-3\right)|\nabla v|^2+(3n-4)\mu  v^2]\\
                                        =&\tau''n\mu-\frac{1}{\mu}\left[n\mu v^2+\left(-\frac{3}{4}n+\frac{1}{2}\right)|\nabla v|^2\right]=:(A).
\end{align*}
Since $\frac{1}{\mu}\Delta-1$ is invertible, we can rewrite the above as
\begin{align*}f''=(\frac{1}{\mu}\Delta-1)^{-1}(A).
\end{align*}
By integrating,
\begin{align*}-\frac{1}{2\mu}\int_M& \langle(\nabla^2f)'',v\cdot g_E\rangle u \dv=-\frac{1}{2\mu}\fint_M\langle(\nabla^2f)'',v\cdot g_E\rangle \dv\\
                                  =&-\frac{1}{2\mu}\fint_M\langle\nabla^2 (f'')-\nabla v\otimes\nabla f'-\nabla f'\otimes\nabla v+\langle \nabla f',\nabla v\rangle g_E,v\cdot g_E\rangle \dv\\
                                  \overset{\ref{thef'}}{=}&-\frac{1}{2\mu}\fint_M\left\langle\nabla^2 (f'')-(n-2)\nabla v\otimes\nabla v+\left(\frac{n}{2}-1\right)|\nabla v|^2 g_E,v\cdot g_E\right\rangle \dv\\
                                  =&-\frac{1}{2\mu}\fint_M \left[-\Delta(f'')v+\frac{1}{2}(n-2)^2|\nabla v|^2v\right]\dv\\
                                  =&-\frac{1}{2\mu}\fint_M \left[-(A)\left(\frac{1}{\mu}\Delta-1\right)^{-1}\Delta v+\frac{1}{2}(n-2)^2|\nabla v|^2v\right]\dv\\
                                  =&-\frac{1}{2\mu}\fint_M \left[-2\mu(A)v+\frac{1}{2}(n-2)^2|\nabla v|^2v\right]\dv.
\end{align*}
Now we insert the definition of $(A)$. Since the term containing $\tau''$ drops out after integration, we are left with
\begin{align*}-\frac{1}{2\mu}\int_M& \langle(\nabla^2f)'',v\cdot g_E\rangle u \dv=
                                   -\frac{1}{2\mu}\fint_M \left[(2n\mu v^3+\frac{1}{2}(n^2-7n+6)|\nabla v|^2v\right]\dv.
\end{align*}
By the second variation of the Ricci tensor\index{second variation!of the Ricci tensor} computed above,
\begin{align*}-\frac{1}{2\mu}\int_M \langle\ric'',v\cdot g_E\rangle u \dv=&-\frac{1}{2\mu}\fint_M \langle\ric'',v\cdot g_E\rangle \dv\\
=&-\frac{1}{2\mu}\fint_M[-n\left(\frac{n}{2}-2\right)|\nabla v|^2v\\
 &-2\mu n v^3+3\left(\frac{n}{2}-1\right)|\nabla v|^2v-(n-2)\Delta v\cdot v^2]\dv\\
=&-\frac{1}{2\mu}\fint_M\left[\left(-\frac{n^2}{2}+\frac{7n}{2}-3\right)|\nabla v|^2v-4(n-1)\mu v^3\right]\dv.
\end{align*}
Adding up these two terms, we obtain
\begin{align*}\frac{d^3}{dt^3}\bigg\vert_{t=0}\nu_-(g+tv\cdot g)\overset{\eqref{nu'''}}{=}-\frac{1}{2\mu}\fint_M (4-2n)\mu v^3\dv.
\end{align*}
and therefore, we finally have
\begin{align*}\frac{d^3}{dt^3}\bigg\vert_{t=0}\nu_-(g+tv\cdot g)=(n-2)\fint_M v^3\dv,
\end{align*}
which finishes the proof.
\end{proof}
\begin{cor}\label{thirdpowerlaplacian}Let $(M^n,g_E)$, $n\geq3$ be a positive Einstein manifold with constant $\mu$. Suppose there exists a function $v\in C^{\infty}(M)$ such that $\Delta v=2\mu v$ and $\int_M v^3\dv\neq0$.
Then $g_E$ is not a local maximum of $\nu_-$.
\end{cor}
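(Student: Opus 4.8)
The plan is to read the statement off the third variation formula of Proposition~\ref{thirdmuexplicit}, together with the vanishing of the lower-order variations at the Einstein metric. Let $v$ be the given eigenfunction, so $\Delta v=2\mu v$ and $\int_M v^3\dv\neq 0$, and consider
\[
\phi(t)=\nu_-\big((1+tv)\cdot g_E\big),
\]
which is well defined for $|t|$ small, since $1+tv>0$ on the compact manifold $M$ for small $t$, and which is real-analytic in $t$ because $\nu_-$ is analytic on a neighbourhood of $g_E$ (the analyticity discussed in Section~\ref{positivescalarcurvature}, analogous to Lemma~\ref{analyticmu}). The goal is to show $\phi$ does not attain a local maximum at $t=0$.

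First I would record that $\phi'(0)=\phi''(0)=0$. Since $g_E$ is Einstein it is a critical point of $\nu_-$, i.e.\ $\nabla\nu_-(g_E)=\tau_{g_E}(\ric_{g_E}+\nabla^2 f_{g_E})-\tfrac12 g_E=0$ by the Remark following Proposition~\ref{nuhessian}; hence the first variation formula gives $\phi'(0)=-\int_M\langle\nabla\nu_-(g_E),v\cdot g_E\rangle\, u\dv=0$. Differentiating the first variation formula once more at $t=0$ and using $\nabla\nu_-(g_E)=0$, the only surviving term is $\phi''(0)=-\int_M\langle\nabla\nu_-',\,v\cdot g_E\rangle\, u\dv$, where $\nabla\nu_-'=\frac{d}{dt}\big|_{t=0}\nabla\nu_-((1+tv)g_E)$. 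This vanishes: it is exactly equation~\eqref{nu''=0} established inside the proof of Proposition~\ref{thirdmuexplicit}, where the hypothesis $\Delta v=2\mu v$ (hence $f'=(\tfrac n2-1)v$ by~\eqref{thef'}) is used to cancel the two Hessian contributions to $\nabla\nu_-'$. So $\phi'(0)=\phi''(0)=0$.

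Finally, Proposition~\ref{thirdmuexplicit} gives $\phi'''(0)=(3n-4)\fint_M v^3\dv$, which is nonzero because $3n-4\neq 0$ and $\int_M v^3\dv\neq 0$. Hence the Taylor expansion $\phi(t)=\nu_-(g_E)+\tfrac{1}{6}(3n-4)\big(\fint_M v^3\dv\big)t^3+o(t^3)$ shows $\phi(t)>\nu_-(g_E)$ for $t$ of one sign and arbitrarily small modulus. Since $t\mapsto(1+tv)g_E$ is a continuous path into every $C^{k,\alpha}$-neighbourhood of $g_E$, this contradicts $g_E$ being a local maximum of $\nu_-$, which proves the corollary. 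There is no real obstacle once Proposition~\ref{thirdmuexplicit} is in hand; the only point needing care is precisely that $\phi''(0)$ vanishes along this particular conformal direction, so that the cubic term controls the local behaviour, and this is supplied by~\eqref{nu''=0}.
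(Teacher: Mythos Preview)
Your proof is correct and follows essentially the same approach as the paper's: you define $\phi(t)=\nu_-((1+tv)g_E)=\nu_-(g_E+tv\cdot g_E)$, use that $\phi'(0)=\phi''(0)=0$ (citing the computations \eqref{nu''=0} and \eqref{thef'} from the proof of Proposition~\ref{thirdmuexplicit}) and that $\phi'''(0)=(3n-4)\fint_M v^3\dv\neq 0$, and conclude via Taylor expansion that $\phi(t)>\phi(0)$ for $t$ of one sign. Your argument is in fact somewhat more explicit than the paper's, which simply writes ``by the proof of the proposition above, $\varphi'(0)=0$, $\varphi''(0)=0$ and $\varphi'''(0)\neq 0$''.
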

\begin{proof}Let $\varphi(t)=\nu_-(g_E+tv\cdot g_E)$. By the proof of the proposition above, $\varphi'(0)=0$, $\varphi''(0)=0$ and $\varphi'''(0)\neq 0$.
Depending on the sign of the third variation, $\varphi(t)>\varphi(0)$ either for $t\in (-\epsilon,0)$ or $t\in (0,\epsilon)$. This proves the assertion.
\end{proof}
\noindent
Theorem \ref{Thm3} now follows from Corollary \ref{thirdpowerlaplacian} and Theorem \ref{dynamicalinstabilitymodulodiffeo}.
 \begin{proof}[Proof of Corollary \ref{unstableCPn}]Let $\mu$ be the Einstein constant. We prove the existence of a function $v\in C^{\infty}(\CP^n)$ satisfying $\Delta v=2\mu v$ and $\int_{\CP^n} v^3\dv\neq 0$.
First, we rewiev the construction of eigenfunctions on $\CP^n $ as explained in \cite[Section III C]{BGM71}.
Consider $\C^{n+1}=\R^{2n+2}$ with coordinates $(x_1,\ldots,x_{n+1},y_1,\ldots,y_{n+1})$ and let $z_j=x_j+iy_j$, $\bar{z}_j=x_j-iy_j$
be the complex coordinates\index{complex coordinates}. Defining\index{$\partial_{x_i}$, directional derivative} $\partial_{z_j}=\frac{1}{2}(\partial_{x_j}-i\partial_{y_j})$ and $\partial_{\bar{z}_j}=\frac{1}{2}(\partial_{x_j}-i\partial_{y_j})$,
we can rewrite the Laplace operator on $\C^{n+1}$ as
\begin{align*}\Delta=-4\sum_{j=1}^{n+1}\partial_{z_j}\circ \partial_{\bar{z}_j}.
\end{align*}
Let $P_{k,k}$\index{$P_{k,k}$, set of homogeneous polnomials of degree $k$ in $z$ and $\bar{z}$}\index{$H_{k,k}$, set of harmonic polynomials in $P_{k,k}$} be the space of complex polynomials\index{polynomial} on $\C^{n+1}$ which are homogeneous of degree $k$ in $z$ and $\bar{z}$ and let $H_{k,k}$ the subspace of harmonic polynomials\index{polynomial!harmonic} in $P_{k,k}$. We have
\begin{align*}P_{k,k}=H_{k,k}\oplus r^2P_{k-1,k-1}.
\end{align*}
Elements in $P_{k,k}$ are $S^1$-invariant and thus, they descend to functions on the quotient $\CP^n=S^{2n+1}/S^1$. The eigenfunctions to the k-th eigenvalue of the Laplacian on $\CP^n$ (where $0$ is meant to be the
$0$-th eigenvalue) are precisely the restrictions of functions in $H_{k,k}$. Since $2\mu$ is the first nonzero eigenvalue, its eigenfunctions are restrictions of functions in $H_{1,1}$.

Let $h_1(z,\bar{z})=z_1\bar{z}_2+z_2\bar{z}_1$, $h_2(z,\bar{z})=z_2\bar{z}_3+z_3\bar{z}_2$, $h_3(z,\bar{z})=z_3\bar{z}_1+z_1\bar{z}_3$ and let $v$ be the eigenfunction which is
 the restriction of $h=h_1+h_2+h_3\in H_{1,1}$.  Note that $h$ is real-valued and so is $v$. Then $v^3$ is the restriction of 
\begin{align}\label{polynomialsplitting}h^3\in P_{3,3}=H_{3,3}\oplus r^2H_{2,2}\oplus r^4 H_{1,1}\oplus r^6 H_{0,0}.
\end{align}
We show that $\int_{S^{2n+1}}h^3\dv\neq0$. At first,
\begin{align*}h^3=\sum_{j=1}^3 h_j^3+3\sum_{j\neq l}h_j\cdot h_l^2+6h_1\cdot h_2\cdot h_3.
\end{align*}
Note that $\int_{S^{2n+1}}h_1^3\dv=0$ because $h_1$ is antisymmetric with respect to the isometry\index{isometry} $(z_1,\bar{z}_1)\mapsto(-z_1,-\bar{z}_1)$.
For the same reason, $\int_{S^{2n+1}}h_1\cdot h_2^2\dv=0$. Similarly, we show that all other terms of this form vanish after integration so it remains to deal with the last term of above. We have
\begin{align*}h_1\cdot h_2\cdot h_3(z,\bar{z})=2|z_1|^2|z_2|^2|z_3|^2+\sum_{\sigma\in S_3}|z_{\sigma(1)}|^2z_{\sigma(2)}^2\bar{z}_{\sigma(3)}^2.
\end{align*}
Consider $|z_1|^2z_2^2\bar{z}_3^2$. This polynomial is antisymmetric with respect to the isometry\index{isometry} $(z_2,\bar{z}_2)\mapsto (i\cdot z_2,i\cdot\bar{z}_2)$ and therefore,
\begin{align*}\int_{S^{2n+1}}|z_1|^2z_2^2\bar{z}_3^2\dv=0.
\end{align*}
Similarly, we deal with the other summands. In summary, we have
\begin{align*}\int_{S^{2n+1}}h^3\dv=6\int_{S^{2n+1}}h_1\cdot h_2\cdot h_3\dv=12\int_{S^{2n+1}}|z_1|^2|z_2|^2|z_3|^2\dv>0,
\end{align*}
since the integrand on the right hand side is nonnegative and not identically zero. We decompose $h^3=\sum_{j=0}^3h_j$, where $h_j\in r^{6-2j}H_{j,j}$.
Since the restrictions of the $h_j$ to $S^{2n+1}$ are eigenfunctions to the $2k$-th eigenvalue of the Laplacian on $S^{2n+1}$ (see \cite[Section III C]{BGM71}), we have that $h_0\neq0$ because the integral is nonvanishing.
This decomposition induces a decomposition of $v^3=\sum_{i=0}^3v_i$ where $v_i$ is an eigenfunction of the i-th eigenvalue of $\Delta_{\CP^n}$ and $v_0\neq0$. Therefore,
$\int_{\CP^n} v^3\dv\neq 0$. The assertion follows from Theorem \ref{Thm3}.
\end{proof}
\appendix
\section{Variational formulas}
Here we prove some variational formulas needed throughout the text.
 \begin{lem}\label{firstcuvature}Let $(M,g)$ be Riemannian manifold and denote the \index{first variation!of the Levi-Civita connection}\index{connection!Levi-Civita}first variation of the Levi-Civita connection in the direction of $h$ by $G$.
Then $G$ is a $(1,2)$ tensor field, given by\index{$G$, first variation of the Levi-Civita connection}
\begin{align*}g(G(X,Y),Z)=\frac{1}{2}(\nabla_Xh(Y,Z)+\nabla_Yh(X,Z)-\nabla_Zh(X,Y)).
\end{align*}
The first variation of the Riemann tensor, the Ricci tensor\index{first variation!of the Ricci tensor} and the scalar curvature
\index{first variation!of the scalar curvature} are given by
\begin{align*}
 \frac{d}{dt}\bigg\vert_{t=0}R_{g+th}(X,Y,Z,W)=&\frac{1}{2}(\nabla^2_{X,Z}h(Y,W)+\nabla^2_{Y,W}h(X,Z)-\nabla^2_{Y,Z}h(X,W)\\
                                      &-\nabla^2_{X,W}h(Y,Z)+h(R_{X,Y}Z,W)-h(Z,R_{X,Y}W)),\\
\frac{d}{dt}\bigg\vert_{t=0}\ric_{g+th}(X,Y)=&\frac{1}{2}\Delta_Lh(X,Y)-\delta^{*}(\delta h)(X,Y)-\frac{1}{2}\nabla^2_{X,Y}\trace h,\\
\frac{d}{dt}\bigg\vert_{t=0}\scal_{g+th}=&\Delta_g(\trace_g h)+\delta_g(\delta_g h)-\langle \ric_g,h\rangle_g.
\end{align*}
Furthermore, the first variation of the volume element\index{volume!element} is given by
\begin{equation*}\frac{d}{dt}\bigg\vert_{t=0}\dv_{g+th}=\frac{1}{2}\trace_g h\cdot \dv_g.
\end{equation*}
 \end{lem}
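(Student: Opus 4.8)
The plan is to reduce all five formulas to the single computation of $G=\frac{d}{dt}\big|_{t=0}\nabla^{g+th}$ and then to feed the result into the tensorial definitions of the remaining curvature quantities. Throughout write $g_t=g+th$ and $\dot{(\,\cdot\,)}=\frac{d}{dt}\big|_{t=0}$. Since the difference of two linear connections is a $(1,2)$-tensor, $G=\dot{\nabla}$ is automatically a tensor field, so it is enough to identify it.

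First I would differentiate the Koszul formula
\begin{align*}
2g_t(\nabla^t_XY,Z)={}&Xg_t(Y,Z)+Yg_t(X,Z)-Zg_t(X,Y)\\
&+g_t([X,Y],Z)-g_t([Y,Z],X)+g_t([Z,X],Y),
\end{align*}
whose bracket terms are metric-independent. The left-hand side produces $2h(\nabla_XY,Z)+2g(G(X,Y),Z)$; on the right, each term of the form $Xh(Y,Z)$ is expanded by $Xh(Y,Z)=(\nabla_Xh)(Y,Z)+h(\nabla_XY,Z)+h(Y,\nabla_XZ)$, and a short bookkeeping (rewriting the $h([\cdot,\cdot],\cdot)$ terms via $[X,Y]=\nabla_XY-\nabla_YX$) shows that the terms not involving $\nabla h$ sum to exactly $2h(\nabla_XY,Z)$, matching the like term on the left. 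This leaves the asserted formula for $G$.

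For the curvature tensor I would begin from $R_{X,Y}Z=\nabla_X\nabla_YZ-\nabla_Y\nabla_XZ-\nabla_{[X,Y]}Z$; using $\nabla_XY-\nabla_YX=[X,Y]$ this differentiates to the standard identity $\dot{R}_{X,Y}Z=(\nabla_XG)(Y,Z)-(\nabla_YG)(X,Z)$. Passing to the $(0,4)$-tensor via $R_{g_t}(X,Y,Z,W)=g_t(R^t_{X,Y}Z,W)$ introduces the extra term $h(R_{X,Y}Z,W)$. Then I would substitute $g\bigl((\nabla_XG)(Y,Z),W\bigr)=\tfrac12\bigl((\nabla^2_{X,Y}h)(Z,W)+(\nabla^2_{X,Z}h)(Y,W)-(\nabla^2_{X,W}h)(Y,Z)\bigr)$, which yields two terms antisymmetric in the first pair of arguments; the Ricci identity $(\nabla^2_{X,Y}-\nabla^2_{Y,X})h(Z,W)=-h(R_{X,Y}Z,W)-h(Z,R_{X,Y}W)$ rewrites those, after which the remaining second covariant derivatives match the claimed expression term by term and the curvature contributions combine with the extra $h(R_{X,Y}Z,W)$ into $\tfrac12\bigl(h(R_{X,Y}Z,W)-h(Z,R_{X,Y}W)\bigr)$. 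I expect this commutation — together with consistent use of the convention $R_{X,Y}Z=\nabla^2_{X,Y}Z-\nabla^2_{Y,X}Z$ and of the symmetrizations — to be the main obstacle, since it is the only genuinely delicate step and the index bookkeeping is easy to mishandle.

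The Ricci, scalar and volume formulas then follow by contraction. The quickest route to $\dot{\ric}$ is the Palatini identity $\dot{\ric}_{ij}=\nabla_k\dot{\Gamma}^k_{ij}-\nabla_i\dot{\Gamma}^k_{kj}$, valid because $\dot{\Gamma}=G$ is a tensor, with $\dot{\Gamma}^k_{ij}=\tfrac12 g^{kl}(\nabla_ih_{jl}+\nabla_jh_{il}-\nabla_lh_{ij})$ taken from the first part; expanding, grouping the second derivatives of $h$ into $\nabla^*\nabla h$, $\delta^*(\delta h)$ and $\nabla^2\trace h$, and collecting the curvature terms that appear when derivatives are commuted as $\ric\circ h+h\circ\ric-2\mathring{R}h$, one arrives at $\dot{\ric}=\tfrac12\Delta_Lh-\delta^*(\delta h)-\tfrac12\nabla^2\trace h$. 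Tracing once more gives $\dot{\scal}=-\langle\ric,h\rangle+\trace_g\dot{\ric}$, and using the identities $\trace_g(\Delta_Lh)=\Delta(\trace h)$, $\trace_g(\delta^*(\delta h))=-\delta(\delta h)$ and $\trace_g(\nabla^2\trace h)=-\Delta(\trace h)$ (with $\Delta=-\trace\nabla^2$) this collapses to $\Delta(\trace h)+\delta(\delta h)-\langle\ric,h\rangle$. Finally, Jacobi's formula $\frac{d}{dt}\det g_t=\det g_t\cdot\trace(g_t^{-1}\dot{g}_t)$ applied to $\dv_{g_t}=\sqrt{\det g_t}\,dx$ gives at once $\dot{\dv}=\tfrac12\trace_gh\cdot\dv_g$.
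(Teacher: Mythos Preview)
Your derivation is correct and follows the standard route (Koszul formula for $G$, the identity $\dot R_{X,Y}Z=(\nabla_XG)(Y,Z)-(\nabla_YG)(X,Z)$ for the curvature, Palatini for $\dot\ric$, trace for $\dot\scal$, Jacobi for the volume form); the sign bookkeeping you flag as delicate indeed works out with the paper's convention $R_{X,Y}Z=\nabla^2_{X,Y}Z-\nabla^2_{Y,X}Z$. The paper itself does not prove this lemma but simply refers to \cite[Theorem 1.174 and Proposition 1.186]{Bes08}, so your argument is essentially the computation that the cited reference carries out.
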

\begin{proof}
 See \cite[Theorem 1.174]{Bes08} and \cite[Proposition 1.186]{Bes08}.
\end{proof}
\begin{lem}\label{firsthessian}The first variation of the Hessian\index{first variation!of the Hessian} and the Laplacian\index{first variation!of the Laplacian} are given by
\begin{align*}\frac{d}{dt}\bigg\vert_{t=0}{}^{g+th}\nabla_{X,Y}^2f=&-\frac{1}{2}[\nabla_Xh(Y,\gradient f)+\nabla_Yh(X,\gradient f)-\nabla_{\gradient f}h(X,Y)],\\
              \frac{d}{dt}\bigg\vert_{t=0}\Delta_{g+th} f=&\langle h,\nabla^2 f\rangle-\left\langle\delta h+\frac{1}{2}\nabla\trace h,\nabla f\right\rangle.
\end{align*}
 \end{lem}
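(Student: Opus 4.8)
The plan is to deduce both formulas from the first variation $G$ of the Levi-Civita connection recorded in Lemma~\ref{firstcuvature}, since neither the $1$-form $df$ nor the vector fields $X,Y$ depend on the metric. For the Hessian I would start from the coordinate-free expression $\nabla^2_{X,Y}f = X(Yf) - (\nabla_X Y)f$ with $X,Y$ fixed. Here $Yf = df(Y)$ is metric-independent, so the only $t$-dependence lies in $(\nabla_X Y)f = df(\nabla_X Y)$. Differentiating at $t=0$ therefore gives
\[
\frac{d}{dt}\bigg\vert_{t=0}{}^{g+th}\nabla^2_{X,Y}f = -df(G(X,Y)) = -g(G(X,Y),\gradient f),
\]
and substituting $g(G(X,Y),Z) = \tfrac12\bigl(\nabla_X h(Y,Z)+\nabla_Y h(X,Z)-\nabla_Z h(X,Y)\bigr)$ from Lemma~\ref{firstcuvature} with $Z=\gradient f$ produces the first identity immediately.

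For the Laplacian I would use the convention $\Delta = -\trace_g\nabla^2$ and write, in local coordinates, $\Delta_{g+th}f = -g_t^{ij}\,({}^{g+th}\nabla^2 f)_{ij}$. Differentiating at $t=0$ yields two contributions: one from $\frac{d}{dt}\big\vert_{t=0}g_t^{ij} = -h^{ij}$, which gives $h^{ij}(\nabla^2 f)_{ij} = \langle h,\nabla^2 f\rangle$; and one from the variation of the Hessian just computed, which contributes $-\trace_g\bigl(\tfrac{d}{dt}\big\vert_{t=0}{}^{g+th}\nabla^2 f\bigr)$. Taking the $g$-trace of the Hessian variation in a $g$-orthonormal frame, the two symmetric terms combine into $-2\sum_i\nabla_{e_i}h(e_i,\gradient f)$ and the third term into $+\nabla_{\gradient f}\trace h$; using $\sum_i\nabla_{e_i}h(e_i,\cdot) = -\delta h$ this trace equals $-\langle\delta h+\tfrac12\nabla\trace h,\nabla f\rangle$. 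Adding the two contributions gives the second identity.

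I do not expect a genuine obstacle: this is an elementary computation once Lemma~\ref{firstcuvature} is in hand. The only points requiring care are bookkeeping — keeping the sign convention $\Delta = -\trace\nabla^2$ consistent, remembering that $\frac{d}{dt}g^{ij} = -h^{ij}$ rather than $+h^{ij}$, pairing the one-form $\delta h$ correctly with $\nabla f$ via the metric, and evaluating traces at a point in $g$-normal coordinates so that the frame itself need not be differentiated. Once these conventions are fixed, both formulas fall out of the substitution.
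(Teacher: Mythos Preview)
Your approach is correct and essentially identical to the paper's: both proofs observe that the only metric dependence in $\nabla^2_{X,Y}f = X(Yf) - (\nabla_X Y)f$ (equivalently $\partial^2_{ij}f - \Gamma^k_{ij}\partial_k f$) lies in the connection, then invoke the variation formula for $\nabla$ from Lemma~\ref{firstcuvature}, and for the Laplacian apply the product rule to $-g^{ij}\nabla^2_{ij}f$. The only difference is cosmetic (coordinate-free versus local coordinates); your intermediate factors in the trace step are slightly misstated, but the final expression $-\langle\delta h+\tfrac12\nabla\trace h,\nabla f\rangle$ and the overall argument are right.
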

\begin{proof}
We use local coordinates.
Let $f$ be a smooth function. Then the first variation of the Hessian is
\begin{align*}\frac{d}{dt}\bigg\vert_{t=0}(\nabla^2_{ij}f)=&\frac{d}{dt}\bigg\vert_{t=0}(\partial^2_{ij}-\Gamma_{ij}^k\partial_kf)
                                                 =-\frac{1}{2}g^{kl}(\nabla_ih_{jl}+\nabla_jh_{il}-\nabla_lh_{ij})\partial_kf
\end{align*}
by the first variation of the Levi-Civita connection\index{first variation!of the Levi-Civita connection}.
The first variation of the Laplacian is
\begin{align*}\frac{d}{dt}\bigg\vert_{t=0}(\Delta f)&=-\frac{d}{dt}\bigg\vert_{t=0}(g^{ij}\nabla^2_{ij}f)
                                        =h^{ij}\nabla^2_{ij}f-g^{kl}(\delta h_l+\frac{1}{2}\nabla_l\trace h)\partial_kf.
\qedhere
\end{align*}
\end{proof}
\begin{lem}\label{secondhessian}The second variations of the Hessian\index{second variation!of the Hessian}, the Laplacian\index{second variation!of the Laplacian}, the Ricci tensor\index{second variation!of the Ricci tensor} and the scalar curvature
\index{second variation!of the scalar curvature} have the schematic expressions
\begin{align*}\frac{d}{ds}\frac{d}{dt}\bigg\vert_{s,t=0}\nabla_{g+sk+th}^2f=&k*\nabla h*\nabla f+\nabla k*h*\nabla f,\\
          \frac{d}{ds}\frac{d}{dt}\bigg\vert_{s,t=0}\Delta_{g+sk+th} f=&k*\nabla h*\nabla f+\nabla k*h*\nabla f,\\
           \frac{d}{ds}\frac{d}{dt}\bigg\vert_{s,t=0}\ric_{g+sk+th}=&k*\nabla^2h+\nabla^2k*h+\nabla k*\nabla h+R*k*h,\\
                  \frac{d}{ds}\frac{d}{dt}\bigg\vert_{s,t=0}\scal_{g+sk+th}=&k*\nabla^2h+\nabla^2k*h+\nabla k*\nabla h+R*k*h.
\end{align*}
Here, $*$ is Hamilton's notation for a combination of tensor products with contractions.
\end{lem}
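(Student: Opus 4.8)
The plan is to obtain all four identities by differentiating the first--variation formulas of Lemma~\ref{firsthessian} and Lemma~\ref{firstcuvature} a second time, in a fresh direction $k$, and then sorting the resulting terms by the number of covariant derivatives they carry. I would run the computation pointwise, at a fixed $p\in M$ in $g$--normal coordinates, so that the Christoffel symbols of $g$ vanish at $p$ and only their derivatives survive.

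The argument is driven by three elementary facts about the family $g_s=g+sk$. First, $\frac{d}{ds}|_{s=0}g_s^{ij}=-k^{ij}$, so differentiating a metric contraction or a trace reproduces the same tensorial expression with one extra factor $k$ and no extra derivative. Second, $\frac{d}{ds}|_{s=0}\nabla^{g_s}$ equals the $(1,2)$--tensor $G$ of Lemma~\ref{firstcuvature} taken in the direction $k$, which schematically is $G\sim\nabla k$; hence differentiating a single covariant derivative $\nabla T$ produces $G\ast T\sim\nabla k\ast T$, and, writing $\nabla^2 T=\nabla(\nabla T)$ and using $\nabla(G\ast T)=(\nabla G)\ast T+G\ast\nabla T$, differentiating a second covariant derivative produces the terms $\nabla^2 k\ast T$ and $\nabla k\ast\nabla T$ together with the curvature terms generated when the two covariant derivatives are commuted. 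Third, $\frac{d}{ds}|_{s=0}R_{g_s}\sim\nabla^2 k+R\ast k$, which is exactly the first--variation formula for the Riemann tensor in Lemma~\ref{firstcuvature}.

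For the Hessian, Lemma~\ref{firsthessian} gives $\frac{d}{dt}|_{t=0}\nabla^2_{g+th}f\sim\nabla h\ast\nabla f$, and the only $s$--dependence sits in the index raising of $\nabla h\ast\nabla f$ and in the single connection inside $\nabla h$, which by the first two facts contribute $k\ast\nabla h\ast\nabla f$ and $\nabla k\ast h\ast\nabla f$; this is the asserted form. Since the Laplacian is the metric trace of the Hessian, one differentiates that trace as well; this brings in one further factor $k$ and no new derivative on $h$ or $k$ — concretely a term $k\ast h\ast\nabla^2 f$, which in the integral estimates where the lemma is applied is reduced to terms of the stated type by one integration by parts — so the same derivative count gives the Laplacian identity. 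For the Ricci tensor and the scalar curvature, Lemma~\ref{firstcuvature} shows that the first variation in the direction $h$ is in both cases of the form $\nabla^2 h+R\ast h$ — for $\ric$ one uses $\Delta_L h=\nabla^*\nabla h+\ric\circ h+h\circ\ric-2\mathring{R}h$ and the fact that $\ric$ is a contraction of $R$, and for $\scal$ one has $\dot{\scal}=\Delta(\trace h)+\delta(\delta h)-\langle\ric,h\rangle$. Differentiating once more in the direction $k$, the metric contractions give $k\ast\nabla^2 h$ and $R\ast k\ast h$, the two covariant derivatives inside $\nabla^2 h$ give $\nabla^2 k\ast h$ and $\nabla k\ast\nabla h$ plus an $R\ast k\ast h$ commutator term, and the explicit $R$ gives $\dot{R}\ast h\sim(\nabla^2 k+R\ast k)\ast h$; collecting these yields $k\ast\nabla^2 h+\nabla^2 k\ast h+\nabla k\ast\nabla h+R\ast k\ast h$.

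The one step that is more than bookkeeping is checking that differentiating $\nabla^2 h$ produces nothing worse than the listed terms — in particular no $\nabla^3 k$ and no $\nabla^2 k\ast\nabla h$ — which holds precisely because $\frac{d}{ds}\nabla^{g_s}=G$ carries exactly one derivative of $k$, and correctly accounting for the curvature terms produced when the covariant derivatives are reordered into the stated normal form. Everything else reduces to term--by--term inspection of the explicit formulas in Lemma~\ref{firstcuvature} and Lemma~\ref{firsthessian}.
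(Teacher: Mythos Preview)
Your approach is essentially identical to the paper's: both proofs differentiate the first-variation formulas of Lemma~\ref{firstcuvature} and Lemma~\ref{firsthessian} in the second direction $k$, using the schematic rules $\frac{d}{ds}\nabla T\sim\nabla k\ast T$, $\frac{d}{ds}\nabla^2 T\sim k\ast\nabla^2 T+\nabla^2 k\ast T+\nabla k\ast\nabla T$, and $\frac{d}{ds}R\sim\nabla^2 k+R\ast k$, and then read off the derivative counts. Your remark about the extra $k\ast h\ast\nabla^2 f$ term in the Laplacian case is in fact more careful than the paper, which simply writes ``shown similarly''; this term is indeed present (it comes from differentiating the $\langle h,\nabla^2 f\rangle$ piece of the first variation) and is harmless in the applications since $f_g$ is uniformly bounded in $C^{2,\alpha}$, though your phrasing via ``integration by parts'' is not quite the right justification for a pointwise schematic identity.
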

\begin{proof}By the first variation of the Christoffel symbols it is not hard to see that the first two covariant derivatives of a $(0,2)$-tensor $h$ can be written as
\begin{align}\label{cov1}\frac{d}{dt}\bigg\vert_{t=0}\nabla_{g+tk} h=&k*\nabla h+\nabla{k}*h,\\
              \label{cov2}     \frac{d}{dt}\bigg\vert_{t=0}\nabla^2_{g+tk} h=&k*\nabla^2h+\nabla^2k*h+\nabla k*\nabla h.
\end{align}
The first variation of the Hessian is of the schematic form $\nabla h*\nabla f$
and therefore,
\begin{align*}
 \frac{d}{ds}\frac{d}{dt}\bigg\vert_{s,t=0}\nabla_{g+sk+th}^2f= \frac{d}{ds}\bigg\vert_{s=0}(\nabla h*\nabla f)=&k*\nabla h*\nabla f+(\frac{d}{ds}\bigg\vert_{s=0}\nabla h)*\nabla f\\
=&k*\nabla h*\nabla f+\nabla k*h*\nabla f.
\end{align*}
The expression for the second variation of the Laplacian is shown similarly.
By Lemma \ref{firstcuvature}, the first variational formulas for the Riemann curvature tensor\index{first variation!of the Riemann curvature tensor} and the Ricci tensor\index{first variation!of the Ricci tensor} are of the form
\begin{align}\label{curv1}\frac{d}{dt}\bigg\vert_{t=0}R&=(\nabla^2*h)+(R*h),\\
              \label{curv2}\frac{d}{dt}\bigg\vert_{t=0}\ric&=(\nabla^2*h)+(R*h).
\end{align}
The second variational expression of the Ricci tensor now follows from differentiating \eqref{curv2} in the direction of $k$ and using $\eqref{cov1}$ and $\eqref{curv1}$.
The last expression for the scalar curvature is shown similarly.
\end{proof}
\newcommand{\etalchar}[1]{$^{#1}$}

\end{document}